\numberwithin{equation}{section}
\newtheorem{theorem}{Theorem}[section]
\newtheorem{lemma}[theorem]{Lemma}
\newtheorem{proposition}[theorem]{Proposition}
\newtheorem{corollary}[theorem]{Corollary}
\theoremstyle{remark}
\newtheorem{remark}[theorem]{Remark}
\newcommand{\lie}{\mathfrak}
\newcommand{\nr}{\textnormal}
\newcommand{\wt}{\widetilde}
\newcommand{\ol}{\overline}
\newcommand{\CC}{\mathbb{C}}
\newcommand{\PP}{\mathbb{P}}
\newcommand{\RR}{\mathbb{R}}
\newcommand{\ZZ}{\mathbb{Z}}
\newcommand{\Cc}{\mathcal{C}}
\newcommand{\Ee}{\mathcal{E}}
\newcommand{\Ff}{\mathcal{F}}
\newcommand{\Hh}{\mathcal{H}}
\newcommand{\Mm}{\mathcal{M}}
\newcommand{\Oo}{\mathcal{O}}
\newcommand{\Pp}{\mathcal{P}}
\newcommand{\Ss}{\mathcal{S}}
\newcommand{\Uu}{\mathcal{U}}
\newcommand{\s}{\mathbf{s}}
\newcommand{\Fff}{\mathscr{F}}
\newcommand{\Mmm}{\mathscr{M}}
\newcommand{\aA}{\lie{a}}
\newcommand{\cC}{\lie{c}}
\newcommand{\gG}{\lie{g}}
\newcommand{\hH}{\lie{h}}
\newcommand{\mM}{\lie{m}}
\newcommand{\lL}{\lie{l}}
\newcommand{\pP}{\lie{p}}
\newcommand{\qQ}{\lie{q}}
\newcommand{\sS}{\lie{s}}
\newcommand{\tT}{\lie{t}}
\newcommand{\zZ}{\lie{z}}
\newcommand{\quotient}[2]{{\raisebox{.2em}{\thinspace $#1$}\left / \raisebox{-.2em}{ $#2$}\right.}}
\newcommand{\git}[2]{{\raisebox{.2em}{\thinspace $#1$}\left /\!\!/ \raisebox{-.2em}{$#2$}\right.}}
\newcommand{\morph}[6]{\begin{array}{cccc} #1 \, : &  #2  & \stackrel{#6}{\longrightarrow} &  #3  \\  & #4 &\longmapsto & #5  \end{array}}
\newcommand{\map}[5]{\begin{array}{cccc}   #1  & \stackrel{#5}{\lra} &  #2  \\  #3 & \longmapsto & #4  \end{array}}
\newcommand\Quotient[2]{
        \mathchoice
            {
                \text{\raise1ex\hbox{\thinspace $#1$}\Big{/} \lower1ex\hbox{$#2$} \thinspace}%
            }
            {
                #1\,/\,#2
            }
            {
                #1\,/\,#2
            }
            {
                #1\,/\,#2
            }
    }
\newcommand\GIT[2]{
        \mathchoice
            {
                \text{\raise1ex\hbox{\thinspace $#1$}\Big{/}\!\!\!\!\Big{/} \lower1ex\hbox{$#2$} \thinspace}%
            }
            {
                #1\,/\,#2
            }
            {
                #1\,/\,#2
            }
            {
                #1\,/\,#2
     a       }
    }
\newcommand{\dolbeault}{\overline{\partial}}
\newcommand{\Slash}{/\!\!/}
\newcommand{\lra}{\longrightarrow}
\DeclareMathOperator{\id}{id}
\DeclareMathOperator{\rk}{rk}
\DeclareMathOperator{\Ad}{Ad}
\DeclareMathOperator{\ad}{ad}
\DeclareMathOperator{\Lie}{Lie}
\DeclareMathOperator{\Sym}{Sym}
\DeclareMathOperator{\Pic}{Pic}
\DeclareMathOperator{\im}{im}
\DeclareMathOperator{\Spec}{Spec}
\DeclareMathOperator{\cay}{cay}
\DeclareMathOperator{\HN}{HN}
\DeclareMathOperator{\ord}{ord}
\DeclareMathOperator{\red}{red}
\newcommand{\dif}{\thinspace d}
\newcommand{\Sets}{(\mathbf{Sets})}
\newcommand{\Aff}{(\mathbf{Aff})}
\DeclareMathOperator{\GL}{GL}
\DeclareMathOperator{\SL}{SL}
\DeclareMathOperator{\U}{U}
\DeclareMathOperator{\SU}{SU}
\DeclareMathOperator{\Sp}{Sp}
\DeclareMathOperator{\Hom}{Hom}
\DeclareMathOperator{\Aut}{Aut}
\DeclareMathOperator{\C}{C}
\title{{\bf Higgs bundles over elliptic curves for real groups}}
\author{Emilio Franco}
\address{Emilio Franco \\ CMUP (Centro de Matem\'atica da Universidade do
Porto) \\ Universidade do Porto \\ Rua do Campo Alegre 1021/1055 \\
4169-007, Porto (Portugal)}
\email{emilio.franco@fc.up.pt}
\author{Oscar Garc\'{\i}a-Prada}
\address{Oscar Garc\'{\i}a-Prada \\ ICMAT (Instituto de Ciencias Matem\'aticas) \\ CSIC-UAM-UC3M-UCM \\ 
Calle Nicol\'as Cabrera 15 \\ 28049 Madrid (Spain)}
\email{oscar.garcia-prada@icmat.es}
\author{P. E. Newstead}
\address{P. E. Newstead \\ Department of Mathematical Sciences \\ University of Liverpool \\ 
Peach Street \\ Liverpool L69 7ZL (United Kingdom)}
\email{newstead@liverpool.ac.uk}
\begin{document}

\date{\today}

\keywords{Higgs bundles, elliptic curves, Hitchin map.}

\subjclass[2010]{14H60, 14D20, 14H52} 

\thanks{First author supported by Funda\c{c}\~{a}o de Amparo \`a Pesquisa do Estado de S\~{a}o Paulo through FAPESP 2012/16356-6 and BEPE-2015/06696-2. Second author partially supported by the Ministerio de Econom\'ia y Competitividad of Spain through Project MTM2010-17717 and Severo Ochoa Excellence Grant.}

\begin{abstract}
We study topologically trivial $G$-Higgs bundles over an elliptic curve $X$ when the structure group $G$ is a connected real form of a complex semisimple Lie group $G^\CC$. We achieve a description of their (reduced) moduli space, the associated Hitchin fibration and the finite morphism to the moduli space of $G^\CC$-Higgs bundles.
\end{abstract}

\maketitle

\tableofcontents

\section{Introduction}

This is the third of a series of papers dedicated to the description of the moduli spaces of $G$-Higgs bundles over an elliptic curve $(X,x_0)$ (usually denoted simply by $X$). In the first paper \cite{franco&oscar&newstead_1} we dealt with the classical complex Lie groups and in the second \cite{franco&garcia-prada&newstead_2} with arbitrary connected complex reductive Lie groups, where we extended the description of the normalization of the moduli space given by Thaddeus \cite{thaddeus} to arbitrary degree. In this paper we address the case of real semisimple Lie groups.

To state our main result we first give some background. Let $G$ be a real semisimple Lie group, and let $H\subset G$ be a maximal compact subgroup. One has the  Cartan decomposition $\gG = \hH \oplus \mM$ of the Lie algebra of $G$,  where  $\hH$ is the Lie algebra of $H$ and $\mM$ is the orthogonal subspace to $\hH$ with respect to the Killing form. The group  $H$ acts on $\mM$ via the {\it isotropy representation} and this action extends to the complexification.  A {\it $G$-Higgs bundle} over the Riemann surface $\Sigma$ is a pair $(E,\Phi)$, where $E$ is a principal $H^\CC$-bundle over $\Sigma$ and $\Phi$ (the {\it Higgs field}) is a holomorphic section of $E(\mM^\CC) \otimes \Omega^1_{\Sigma}$ --- the bundle associated to the isotropy representation twisted by the canonical bundle of the curve. We say that $(E,\Phi)$ is topologically trivial if the topological class of the principal bundle $E$ is trivial. We denote the moduli space of topologically trivial $G$-Higgs bundles over $\Sigma$ by $\Mm_\Sigma(G)$, and by $\Mm^{\red}_\Sigma(G)$, its reduced subscheme.

\

The main result of this paper is the following (see Section \ref{section-moduli} for details).

\begin{theorem}[{\bf Theorem \ref{tm Mm cong olXi over W}}]\label{t1.1} 
Suppose that $X$ is an elliptic curve. Let $G$ be a connected real form of a complex semisimple Lie group, let $\Mm_X(G)$ be the moduli space of topologically trivial $G$-Higgs bundles over $X$ and let $\Mm^{\red}_X(G)$ be its reduced subscheme. Then there exists an isomorphism
\begin{equation} \label{eq G-HB} 
\Mm^{\red}_X(G) \cong \quotient{\Xi_G}{W},
\end{equation}
where $W$ is the Weyl group for the action of $H^\CC$ on  $\mM^\CC$, and $\Xi_G$ is a quasiprojective variety described as a fibration of (finite quotients of) abelian Lie algebras over $X \otimes_\ZZ \Lambda$, where $\Lambda$ is the cocharacter lattice of $H^\CC$. 
\end{theorem}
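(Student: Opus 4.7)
The plan is to reduce each $S$-equivalence class in $\Mm_X(G)$ to a canonical polystable representative in which both the underlying $H^\CC$-bundle $E$ and the Higgs field $\Phi$ simultaneously reduce to essentially abelian data. A crucial simplification is that on the elliptic curve $\Omega^1_X \cong \Oo_X$, so a Higgs field is just a global section $\Phi \in H^0(X, E(\mM^\CC))$, and at every point $p \in X$ the value $\Phi(p) \in \mM^\CC$ is well-defined up to the isotropy action of $H^\CC$.

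First, I would extend the Kostant--Rallis structure theory for the pair $(H^\CC, \mM^\CC)$ to the bundle setting. For a polystable $(E,\Phi)$, closedness of the orbit forces $\Phi$ to be pointwise semisimple, and by Kostant--Rallis any semisimple element of $\mM^\CC$ can be $H^\CC$-conjugated into a fixed Cartan subspace $\aA \subset \mM^\CC$ (all such Cartan subspaces being $H^\CC$-conjugate). This should yield a reduction of the structure group from $H^\CC$ to the centralizer $L = Z_{H^\CC}(\aA)$, which is reductive but typically disconnected, together with an expression of $\Phi$ as a section of the associated bundle $E_L \times_L \aA$. The Weyl group in the statement is then identified as $W = N_{H^\CC}(\aA)/Z_{H^\CC}(\aA)$, the little Weyl group of the symmetric pair.

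Second, I would feed this reduction into the classification from \cite{franco&garcia-prada&newstead_2}: topologically trivial semistable $L$-bundles over $X$ are parametrized by $(X \otimes_\ZZ \Lambda_L)/W_L$, where $\Lambda_L$ is the cocharacter lattice of $L^0$. Since $\aA$ is a Cartan subspace, $L$ contains a maximal torus of $H^\CC$, allowing an identification of $\Lambda_L$ with $\Lambda$ up to a finite correction tracking the disconnected components of $L$ — and this correction is precisely what produces the ``finite quotients of abelian Lie algebras'' appearing in the fibers of $\Xi_G$. Assembling the data gives a fibration $\Xi_G \to X \otimes_\ZZ \Lambda$ with fibers that are finite quotients of $\aA$, and an obvious $W$-action; the isomorphism \eqref{eq G-HB} then follows by verifying that the resulting morphism $\Xi_G/W \to \Mm^{\red}_X(G)$ is a bijection on points and is schematically an isomorphism at the level of reduced schemes.

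The main obstacle will be globalizing the pointwise Kostant--Rallis decomposition to a reduction of the principal bundle in families, and showing that the resulting bijection is actually an isomorphism of reduced schemes rather than only of underlying sets. Additional technical care is needed because $L$ is typically disconnected and because distinct Cartan subspaces in $\mM^\CC$, though all $H^\CC$-conjugate, give rise to different reductions related by elements of $W$; keeping careful track of these ambiguities is what makes the quotient by $W$ in \eqref{eq G-HB} both necessary and sufficient, and is also what forces the fibers of $\Xi_G$ to be finite quotients of $\aA$ rather than $\aA$ itself.
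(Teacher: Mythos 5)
Your overall strategy --- pass to a polystable representative, use the structure theory of the symmetric pair $(H^\CC,\mM^\CC)$ to make the Higgs field abelian, and then feed the result into the description of bundles over $X$ from \cite{franco&garcia-prada&newstead_2} --- points in the right direction, and the Real Chevalley Theorem and Kostant's small-Weyl-group theorem that you invoke do appear in the paper's fibrewise computation. But there is a genuine gap at your central step: you cannot conjugate the Higgs field of every polystable pair into a single fixed Cartan subspace $\aA\subset\mM^\CC$ compatibly with the bundle, and the claimed uniform reduction of structure group to $L=Z_{H^\CC}(\aA)$ fails. For a polystable pair $(E_\rho,z\otimes\s)$ the Higgs field is constrained by $z\in\zZ_{\mM^\CC}(\rho)$, and polystability forces $z$ into a maximal abelian subalgebra of $\zZ_{\mM^\CC}(\rho)$ --- a subalgebra that depends on $\rho$ and is conjugate to one of the spaces $\aA_B^\CC$ attached to the various conjugacy classes of $\theta$-stable Cartan subalgebras of $\gG$ (indexed, via Cayley transforms and Sugiura's theorem, by admissible root systems $B\in\Upsilon$), not in general to the maximal Cartan subspace. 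Concretely, for $G=\SU(1,1)$ one has $H^\CC\cong\CC^*$, the Cartan subspace $\aA^\CC$ is one-dimensional and $L=Z_{H^\CC}(\aA)=\{\pm1\}$ is finite; your assertion that ``$L$ contains a maximal torus of $H^\CC$'' is false here, $\Lambda_L=0$, and your construction would only see bundles reducing to $\{\pm1\}$ (the four $2$-torsion points of $\hat{X}$), missing entirely the component $\hat{X}\times\{0\}$ of $\Mm^{\red}_X(\SU(1,1))\cong \hat{X}\cup(\hat{X}[2]\times\CC/\pm)$, on which a generic underlying line bundle forces $\Phi=0$. (Relatedly, the $W$ of the theorem is $W(H^\CC,T^\CC)$ acting through $\hat{X}\otimes_\ZZ\Lambda_T$, not the little Weyl group $N_{H^\CC}(\aA)/Z_{H^\CC}(\aA)$ of the symmetric pair, which instead governs the Hitchin base.)

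The missing idea is the stratification by conjugacy classes of $\theta$-stable Cartan subalgebras of the real form. The paper first proves that semistability and polystability of $(E,\Phi)$ descend to $E$, identifies polystable objects with pairs $(E_\rho,z\otimes\s)$ with $z$ in a maximal abelian subalgebra of $\zZ_{\mM^\CC}(\rho)$, and then places $\im\rho$ and $z$ together inside a Cartan subalgebra $\cC_B=\tT_B\oplus\aA_B$ for some $B\in\Upsilon$. This is what produces $\Xi_G$ as a \emph{union} of pieces $(\hat{X}\otimes_\ZZ\Lambda_T)|_B\times(\aA_B^\CC/\Gamma_B)$: the ``fibration'' over $X\otimes_\ZZ\Lambda$ in the statement has jumping fibres ($\aA_0^\CC$ over a generic point, larger unions $\bigcup_{D_j\in\Upsilon_t^{max}}\aA_{D_j}^\CC/\Gamma_{D_j}$ over special points), and is not a fibre bundle with fibre a fixed finite quotient of the Cartan subspace. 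Once this decomposition is in place, your concluding steps --- bijectivity on fibres via the Real Chevalley Theorem together with Kostant's identification of the small Weyl group with the normalizer of $\aA_B^\CC$ in the Weyl group, followed by upgrading a bijective morphism to an isomorphism of reduced schemes --- do match the paper's argument, but they cannot be carried out without first organizing the moduli space over $\Upsilon$.
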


The (reduced) moduli space $\Mm^{\red}_X(G)$ is not irreducible in general and we describe its irreducible components (see Remark \ref{rm irreducible components of Mm}).

\

Higgs bundles were introduced by Hitchin in the context of vector bundles in \cite{hitchin-selfuality_equations} and generalised to arbitrary complex reductive Lie groups in  \cite{hitchin-duke}. Simpson \cite{simpson1, simpson2} constructed the moduli space $\Mm_\Sigma(G^\CC)$ of topologically  trivial $G^\CC$-Higgs bundles for any complex reductive Lie group.  A major result of the theory of Higgs bundles is the non-abelian Hodge theory correspondence, proved by Hitchin \cite{hitchin-selfuality_equations}, Do\-naldson \cite{donaldson}, Simpson \cite{simpson-hb&ls, simpson1, simpson2} and Corlette \cite{corlette}. This states the existence of a homeomorphism, $\Mm_\Sigma(G^\CC) \cong \Cc_\Sigma(G^\CC)$, between  the moduli spaces of $G^\CC$-Higgs bundles and  flat $G^\CC$-connections. 

Higgs bundles for real groups were already considered by Hitchin in \cite{hitchin-selfuality_equations,hitchin-topology} and Simpson \cite{simpson-hb&ls}. An intrinsic approach and systematic study has been done by the second author in collaboration with  Bradlow, Gothen and Mundet i Riera (see e.g. \cite{bradlow&garcia-prada&gothen,BGG2,BGG3,oscar&ignasi&gothen,GGM2}). The existence of the moduli space $\Mm_\Sigma(G)$ for any real semisimple Lie group $G$ follows from Schmitt \cite{schmitt}. 

If $G$ is a real form of a complex semisimple Lie group $G^\CC$,  a $G$-Higgs
bundle extends naturally to a $G^\CC$-Higgs bundle, defining a finite morphism
(see \cite[Prop. 5.8]{garcia-prada&ramanan}). 
\begin{equation} \label{eq MmG to MmG^C}
\Mm_\Sigma(G) \rightarrow \Mm_\Sigma(G^\CC).
\end{equation}


For classical complex Lie groups, using spectral curves, Hitchin \cite{hitchin-duke} showed that $\Mm_\Sigma(G^\CC)$ fibres over a vector space $B_{G^\CC}$ with abelian varieties as generic fibres, becoming an algebraically completely integrable system. This is the so-called {\it Hitchin fibration}. A more canonical  definition of the Hitchin fibration was provided by Donagi \cite{donagi} giving an intrinsic definition of  the Hitchin base $B_{G^\CC}$ as the space $H^0(\Sigma, \gG^\CC \otimes \Omega^1_\Sigma \Slash G^\CC)$ of cameral covers of $\Sigma$. One can generalize the Hitchin fibration to the case of real groups (see \cite{peon,schaposnik,hitchin&schaposnik,garcia-prada&peon&ramanan}) 
\begin{equation} \label{eq hitchin fibration}
\Mm_\Sigma(G) \longrightarrow B_{G},
\end{equation} 
where, using Donagi's approach, $B_{G}$ is $H^0(\Sigma, \mM^\CC \otimes
\Omega^1_\Sigma \Slash H^\CC)$. As shown in 
\cite{franco1,schaposnik,peon,hitchin&schaposnik}, the generic fibre of \eqref{eq hitchin fibration} is no longer an abelian variety for certain groups.

The canonical bundle of an elliptic curve $X$ is trivial, $\Omega^1_X \cong \Oo_X$, and therefore, in this case, the Higgs field is simply an element of $H^0(X,E(\mM^\CC))$. Working with elliptic curves allows a greater level of explicitness. Atiyah \cite{atiyah} described the moduli space of vector bundles, 
\[
M_X(\GL(n,\CC)) \cong \Sym^n(X), 
\]
while Laszlo \cite{laszlo} and Friedman, Morgan and Witten \cite{friedman&morgan, friedman&morgan&witten}, gave a description of the moduli space $M_X(G^\CC)$ of principal $G^\CC$-bundles for a complex reductive Lie group: 
\[
M_X(G^\CC) \cong \quotient{(X \otimes_\ZZ \Lambda)}{W},
\]
where $\Lambda$ is the cocharacter lattice and $W$ is the Weyl group of $G^\CC$. In \cite{thaddeus} Thaddeus studied the case of $G^\CC$-Higgs bundles, showing that the normalization of the moduli space is
\begin{equation} \label{eq G^C-HB}
\widetilde{\Mm}_X(G^\CC) \cong \quotient{(T^*X \otimes_\ZZ \Lambda)}{W}.
\end{equation}
In \cite{franco&garcia-prada&newstead_2} the authors studied the case of $G^\CC$-Higgs bundles, extending this description to arbitrary degree. Since in the case of real groups, the moduli spaces of $G$-Higgs bundles are no longer bijective with their normalizations, we pursue the description of the reduced subscheme $\Mm^{\red}_X(G) \subset \Mm_X(G)$, which we call the reduced moduli space.

The results of this paper are structured as follows. In Section \ref{sc preliminaries} we provide the preliminaries needed for our work, a review on Lie theory, $G$-Higgs bundles and principal bundles over elliptic curves. We show in Section \ref{sc stability} that, if a $G$-Higgs bundle is semistable (resp. polystable), then the underlying principal $H^\CC$-bundle is semistable (resp. polystable), and this allows us to describe these objects explicitly. In Section \ref{sc FfG} we generalize to real groups the existence of what Simpson \cite{simpson1, simpson2} calls {\em representation space} for Higgs bundles and relate it to the  moduli space $\Mm_X(G)$. We also prove some properties of the representation spaces specific to the elliptic case which, together with the explicit description of polystable $G$-Higgs bundles given in Section \ref{sc stability}, allow us to prove the main result of the article, Theorem \ref{t1.1} (Theorem \ref{tm Mm cong olXi over W}). If $G$ is complex we  recover the description  \eqref{eq G^C-HB} given in \cite{thaddeus}. In Section \ref{sc involution}, we study in detail the involution $\imath_G$ (Proposition \ref{pr description of i_G}) and the morphism \eqref{eq MmG to MmG^C} (Proposition \ref{pr description of embedding}). In Section \ref{sc the hitchin fibration} we provide a complete description of the Hitchin fibration \eqref{eq hitchin fibration} and its generic fibre, which is isomorphic to a finite quotient of a certain subset of the abelian variety $X \otimes_\ZZ \Lambda$ (Corollary \ref{co generic Hitchin fibre}). In particular, when $G = \SU^*(4)$ we obtain that the generic Hitchin fibre is $\PP^1 \times \PP^1$, illustrating our comment above regarding the fact that the generic fibre is not always an abelian variety. Finally, we study the Hitchin equation in Section \ref{sc hitchin equation} showing that it decouples into one equation for the metric and another equation for the Higgs field (Proposition \ref{pr splitting of the hitchin equation}). We observe, then, that the Hitchin--Kobayashi correspondence follows in the elliptic case from the Narasimhan--Seshadri--Ramanathan Theorem \cite{narasimhan&seshadri,ramanathan_stable}, and the stability results of Section \ref{sc stability}.

We work in the category of algebraic schemes over $\CC$. Unless otherwise stated, all the bundles considered are algebraic bundles. By bijective morphism we understand an algebraic morphism between schemes that induces a bijection on the sets of $\CC$-points.


\section{Preliminaries}
\label{sc preliminaries}

\subsection{Lie groups and Lie algebras}

\subsubsection{The Cartan decomposition}
\label{sc cartan subalgebras}

Let $G$ be a connected semisimple real form of a connected complex semisimple Lie group $G^\CC$. Let $H\subset G$ be a maximal compact subgroup and let $\gG = \hH \oplus \mM$ be the associated Cartan decomposition, where $\hH$ is the Lie algebra of $H$ and $\mM$ is the orthogonal subspace to $\hH$ with respect to the Killing form. This defines an involution $\theta: \gG \to \gG$ --- the Cartan involution ---, by $\theta |_{\hH} = \id_\hH$ and $\theta |_{\mM} = - \id_\mM$, which can be
naturally extended to the complexification $\gG^\CC$, giving the decomposition $\gG^\CC = \hH^\CC \oplus \mM^\CC$.

This satisfies
\[
[\lie{h},\lie{h}] \subset \lie{h}, \qquad [\lie{h},\lie{m}] \subset \lie{m}, \qquad [\lie{m},\lie{m}] \subset \lie{h}.
\]
Therefore, the restriction to $H$ of the adjoint representation reduces to an action on $\lie{m}$, which extends to what we will call the {\em isotropy representation} of $H^\CC$ on $\lie{m}^\CC$
\[
\iota : H^\CC \to \GL(\mM^\CC).
\]

Consider a Cartan subalgebra $\cC^\CC$ of $\gG^\CC$ and denote by $R(\gG^\CC,\cC^\CC)$ the set of roots associated to $\cC^\CC$. One has the root-space decomposition 
\[
\gG^\CC = \cC^\CC \oplus \bigoplus_{\alpha \in R(\gG^\CC,\cC^\CC)} (\gG^\CC)^\alpha.
\]

Recall that the choice of a lexicographic order on some basis of $\cC^\CC$ defines a notion of positivity for the roots. We write $R^+(\gG^\CC, \cC^\CC)$ for the set of positive roots with respect to a given lexicographic order. A positive root $\alpha \in R^+(\gG^\CC,\cC^\CC)$ is simple if it cannot be written as a sum of any two other positive roots. We denote the set of simple roots by $\Delta(\gG^\CC, \cC^\CC) \subset R^+(\gG^\CC, \cC^\CC)$.

A {\it Cartan subalgebra of the real Lie algebra} $\gG$ is a subalgebra whose complexification is a Cartan subalgebra of $\gG^\CC$. It is always possible to find a $\theta$-stable Cartan subalgebra $\cC$ of $\gG$. In that case, if $\alpha \in R(\gG^\CC, \cC^\CC)$ then $\theta\alpha := \alpha \circ \theta$ is also in $R(\gG^\CC, \cC^\CC)$. Also we have that $\cC = (\cC \cap \hH) \oplus (\cC \cap \mM)$. The number $\dim_\RR(\cC \cap \hH)$ is the compact dimension of $\cC$, while $\dim_\RR(\cC \cap \mM)$ is the non-compact dimension of $\cC$. A Cartan subalgebra is said to be {\it maximally compact} or {\it maximally non-compact} if it maximizes the compact or the non-compact dimension among $\theta$-stable Cartan subalgebras.

\begin{remark}
One can always construct a maximally compact Cartan subalgebra. Take the Lie algebra $\tT$ of a ma\-xi\-mal torus $T$ of $H$. Taking $\aA_0$ to be a maximal abelian subspace of $\zZ_{\mM}(\tT)$, one has that $\cC_0 = \tT \oplus \aA_0$ is a maximally compact $\theta$-stable Cartan subalgebra.
\end{remark}

\begin{remark}
\label{rm Delta preserved by theta}
Given a maximally compact $\theta$-stable Cartan subalgebra $\cC_0$ of $\gG$, one can always find a lexicographic order such that $\theta$ preserves the set of positive roots, and therefore the set of simple roots. It suffices to define a lexicographic order in terms of a basis of $i \tT = i(\cC_0 \cap \hH)$ followed by a basis of $\aA_0 = (\cC_0 \cap \mM)$. By \cite[Proposition 6.70]{knapp} there are no roots that vanish entirely on $\tT = (\cC_0 \cap \hH)$ when the Cartan subalgebra is maximally compact. Since $\theta$ is $1$ on $\cC_0 \cap \hH$ and $-1$ on $\cC_0 \cap \mM$, we see that the positivity will be preserved under $\theta$.
\end{remark}

We write $R(\gG, \cC) = R(\gG^\CC, \cC^\CC)$ for the set of roots associated to the $\theta$-stable Cartan subalgebra $\cC^\CC$ restricted to $\cC$. One can check that the evaluation of a root $\alpha \in R(\gG, \cC)$ is imaginary on $(\cC \cap \hH)$, and real on $(\cC \cap \mM)$. Consequently, a root is {\it real} if it vanishes on $(\cC \cap \hH)$, {\it imaginary} if it vanishes on $(\cC \cap \mM)$, and {\it complex} otherwise. Recall that, for every root $\alpha \in R(\gG, \cC)$, one has that $\theta \alpha \in R(\gG,\cC)$ is a root too. If $\alpha$ is imaginary, $\theta \alpha = \alpha$, so $\theta(\gG^\CC)^\alpha = (\gG^\CC)^{\theta \alpha} = (\gG^\CC)^\alpha$ and the root-space $(\gG^\CC)^\alpha$ is $\theta$-stable. Since the root-spaces are $1$-dimensional, either $(\gG^\CC)^\alpha \subset \hH^\CC$ and $\alpha$ is said to be {\it compact}, or $(\gG^\CC)^\alpha \subset \mM^\CC$ and $\alpha$ is said to be {\it non-compact}. We write $I_{cp}(\gG,\cC), I_{nc}(\gG,\cC) \subset R(\gG, \cC)$ for the sets of imaginary compact and non-compact roots, $R_{re}(\gG,\cC) \subset R(\gG, \cC)$ for the subset of real roots and $R_{cx}(\gG,\cC)$ for the subset of pairs $\{ \alpha, \theta\alpha \}$ of complex roots. If $\cC_0$ is maximally compact, one can prove that $R_{re}(\gG,\cC_0) = 0$ (see Section \ref{sc cayley transform}), and therefore one has the decomposition
\begin{equation} \label{eq decomposition of R}
R(\gG, \cC_0) = I_{cp}(\gG,\cC_0) \sqcup I_{nc}(\gG,\cC_0) \sqcup R_{cx}(\gG, \cC_0).
\end{equation}

Let $\cC_0 = \tT \oplus \aA_0$ be a maximally compact Cartan subalgebra. Let $C_0^\CC$ be the Cartan subgroup of $G^\CC$ associated to $\cC_0^\CC$ and let $T^\CC$ be the Cartan subgroup of $H^\CC$ associated to $\tT^\CC$. One should consider two Weyl groups,
\begin{equation} \label{eq definition of Y}
Y := W(G^\CC,C_0^\CC) = W(\gG^\CC,\cC_0^\CC)
\end{equation}
and 
\begin{equation} \label{eq definition of W}
W := W(H^\CC,T^\CC) = W(\hH^\CC,\tT^\CC).
\end{equation}

Since $\aA_0^\CC = \zZ_{\mM^\CC}(\tT^\CC)$, the normalizer $N_{H^\CC}(\tT^\CC)$ also normalizes $\aA_0^\CC$ and therefore $\cC_0^\CC$. This implies that
\[
N_{H^\CC}(T^\CC) = N_{G^\CC}(C_0^\CC) \cap H^\CC
\]
where $C_0^\CC$ is the Cartan subgroup of $G^\CC$ with Lie algebra $\cC_0^\CC$. Then there is a well defined map of Weyl groups,
\[
W \lra Y,
\]
which is injective since its kernel is the projection of $N_{H^\CC}(\cC_0^\CC) \cap \exp(\aA_0^\CC)$ and therefore it is trivial. 

\begin{remark} \label{rm W preserves I_nc}
Considered as a subgroup of $Y$, $W$ is the group that preserves the splitting $\cC_0^\CC = \tT^\CC \oplus \aA_0^\CC$, and therefore, $W$ can be described as the subgroup of $Y$ that preserves the decomposition \eqref{eq decomposition of R}.
\end{remark}

\subsubsection{Strongly orthogonal roots and real Cartan subalgebras}
\label{sc cayley transform}

Take an ima\-gi\-na\-ry non-compact root $\alpha \in I_{nc}(\gG,\cC)$ and let $\{ x_\alpha : x_\alpha \in (\gG^\CC)^\alpha \}_{\alpha \in R(\gG, \cC_0)}$ be a set of (non-zero) representatives of the root-spaces closed under the Lie bracket (i.e. for every two $x_{\alpha_1}, x_{\alpha_2}$ contained in our set, one has that $[x_{\alpha_1}, x_{\alpha_2}]$ is contained in the set too). Since $\alpha$ is imaginary, the complex conjugate $\ol{x}_\alpha$ is contained in $(\gG^\CC)^{-\alpha}$. For $x_\alpha \in (\gG^\CC)^\alpha$, one can define the {\it first Cayley transform}  associated to $\alpha$ as 
\[
\cay_{1,\alpha} := \Ad(\exp \frac{\pi}{4}(\ol{x}_\alpha-x_\alpha)) : \gG^\CC \to \gG^\CC.
\]
Given a Cartan subalgebra $\cC$ of $\gG$ with compact dimension $n$, the Cayley transform gives us a new Cartan subalgebra $\cC'$ of compact dimension $n-1$
\[
\cC' := \gG \cap \cay_{1,\alpha}(\cC^\CC) = \ker(\alpha |_{\cC}) \oplus \RR(x_\alpha + \ol{x}_\alpha).
\]

\begin{lemma}
\label{lm different x_alpha give different Cartan subalgebras}
Let $\alpha \in I_{nc}(\gG,\cC_0)$ and take $x_\alpha \in (\gG^\CC)^\alpha$. Each choice of $\pm e^{i \theta} \in \U(1)$ gives a different Cartan subalgebra $\ker(\alpha |_{\cC_0}) \oplus \RR (e^{i \theta}x_\alpha + \ol{e^{i \theta}x}_\alpha)$. All these Cartan subalgebras are conjugate under the action of $T$.
\end{lemma}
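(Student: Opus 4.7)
The plan is to treat the two claims separately, handling the distinctness first and then exhibiting explicit conjugating elements of $T$.

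For the distinctness, I would fix the choice of $x_\alpha$ and compare the real lines $L_\theta := \RR(e^{i\theta}x_\alpha + \ol{e^{i\theta}x_\alpha}) \subset \mM$ for different $\theta$. Since $(\gG^\CC)^\alpha$ is one-dimensional and $\alpha \in I_{nc}(\gG,\cC_0)$, the vectors $x_\alpha \in (\gG^\CC)^\alpha \subset \mM^\CC$ and $\ol{x_\alpha} \in (\gG^\CC)^{-\alpha} \subset \mM^\CC$ lie in distinct root-spaces and are therefore $\CC$-linearly independent. Equality $L_{\theta_1} = L_{\theta_2}$ would force $e^{i\theta_1}x_\alpha + e^{-i\theta_1}\ol{x_\alpha} = \lambda(e^{i\theta_2}x_\alpha + e^{-i\theta_2}\ol{x_\alpha})$ for some real $\lambda$; matching coefficients in the direct sum $(\gG^\CC)^\alpha \oplus (\gG^\CC)^{-\alpha}$ gives $\lambda^2 = 1$ and hence $e^{i\theta_1} = \pm e^{i\theta_2}$. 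Since the intersection with the fixed piece $\ker(\alpha|_{\cC_0})$ is trivial, this shows that the resulting Cartan subalgebras are pairwise distinct as $\pm e^{i\theta}$ varies in $\U(1)/\{\pm 1\}$.

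For the conjugacy, the key observation is that $\alpha$ is imaginary, so $\alpha$ vanishes on $\aA_0 = \cC_0 \cap \mM$ but is non-zero on $\tT = \cC_0 \cap \hH$, on which it takes purely imaginary values. Thus the restriction $i\alpha|_\tT : \tT \to \RR$ is a non-zero real linear functional, and composing with $\exp : \tT \to T$ (whose kernel lies in $\ker(i\alpha|_\tT)$ only partially) shows that the character $t \mapsto e^{\alpha(\log t)}$ of $T$ surjects onto $\U(1)$. For any $\theta \in \RR$ one can then choose $t \in T$ with $\Ad(t)x_\alpha = e^{i\theta}x_\alpha$, whence $\Ad(t)\ol{x_\alpha} = e^{-i\theta}\ol{x_\alpha}$. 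On the other hand, since $T$ is abelian and $\aA_0 \subset \zZ_{\mM}(\tT)$, the adjoint action of $T$ fixes $\cC_0$ pointwise, hence in particular fixes $\ker(\alpha|_{\cC_0})$. Therefore $\Ad(t)$ sends the Cartan subalgebra built from $x_\alpha$ to the one built from $e^{i\theta}x_\alpha$.

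The main subtlety I expect is ensuring that the real line actually lies in $\mM$ (so that it completes $\ker(\alpha|_{\cC_0})$ to a Cartan subalgebra of $\gG$ with the correct real form), but this is automatic: since $(\gG^\CC)^{\alpha} \subset \mM^\CC$ for a non-compact imaginary root, the element $e^{i\theta}x_\alpha + \ol{e^{i\theta}x_\alpha}$ is fixed by complex conjugation and sits inside $\mM^\CC$, hence lies in $\mM$. With this in hand the proof reduces to the two bookkeeping steps above.
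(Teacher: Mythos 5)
Your proof is correct, and the conjugacy half is essentially the paper's argument: both rest on the fact that for $s \in \tT$ one has $\Ad(\exp s)x_\alpha = e^{\alpha(s)}x_\alpha$ with $\alpha(s)$ purely imaginary (you additionally spell out the surjectivity of $\alpha|_{\tT}$ onto $i\RR$ and the pointwise fixing of $\ker(\alpha|_{\cC_0})$, which the paper leaves implicit). Where you genuinely differ is the distinctness claim. The paper shows that for $e^{i\theta} \neq \pm 1$ the elements $x_\alpha + \ol{x}_\alpha$ and $e^{i\theta}x_\alpha + e^{-i\theta}\ol{x}_\alpha$ fail to commute, computing $[x_\alpha + \ol{x}_\alpha,\, e^{i\theta}x_\alpha + e^{-i\theta}\ol{x}_\alpha] = e^{i\theta}(1-e^{-2i\theta})[\ol{x}_\alpha, x_\alpha] \neq 0$, so the two generators cannot lie in a common abelian subalgebra, let alone a common Cartan subalgebra. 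You instead argue by linear algebra in the root-space decomposition: since $x_\alpha$ and $\ol{x}_\alpha$ span the distinct root spaces $(\gG^\CC)^{\alpha}$ and $(\gG^\CC)^{-\alpha}$, the real lines $\RR(e^{i\theta}x_\alpha + e^{-i\theta}\ol{x}_\alpha)$ coincide only when $e^{i\theta_1} = \pm e^{i\theta_2}$, and distinct lines give distinct direct sums with $\ker(\alpha|_{\cC_0})$ because these lines meet $\cC_0^\CC$ trivially. Your route is more elementary (no bracket computation, no appeal to $[\ol{x}_\alpha, x_\alpha] \neq 0$); the paper's route yields the marginally stronger fact that the generators do not even commute. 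Two cosmetic points: the parenthetical about the kernel of $\exp$ is unnecessary, since surjectivity of $\alpha|_{\tT}$ onto $i\RR$ already produces an $s\in\tT$ with $\alpha(s)=i\theta$ and one conjugates by $\exp(s)$ directly; and it is worth stating explicitly that $\Ad(t)$ for $t\in T\subset G$ commutes with the conjugation defining $\gG$, which is what gives $\Ad(t)\ol{x}_\alpha = \ol{\Ad(t)x_\alpha} = e^{-i\theta}\ol{x}_\alpha$.
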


\begin{proof}
Note that, unless $e^{i \theta} = \pm 1$, $(x_\alpha + \ol{x}_\alpha)$ and $(e^{i \theta}x_\alpha + \ol{e^{i \theta}x}_\alpha)$ do not commute:
\begin{align*}
[x_\alpha + \ol{x}_\alpha, e^{i \theta}x_\alpha + \ol{e^{i \theta}x}_\alpha] =& [x_\alpha + \ol{x}_\alpha, e^{i \theta}x_\alpha + e^{-i \theta}\ol{x}_\alpha]
\\
=& [\ol{x}_\alpha, e^{i \theta} x_\alpha] + [x_\alpha, e^{-i \theta}\ol{x}_\alpha]
\\
=& e^{i \theta} (1 - e^{-2 i \theta})[\ol{x}_\alpha, x_\alpha]
\\
\neq & 0,
\end{align*}
so each gives a different Cartan subalgebra. 

The elements of $T$ have the form $g = \exp s$ with $s \in \tT$. Since $\alpha(s) = i \theta$ is an imaginary number, one has that $\ad_g(x_\alpha) = e^{i \theta} x_\alpha$, so all these Cartan subalgebras are conjugate by some element of $T$.
\end{proof} 

\begin{remark} \label{rm ZZ_2 are the automorphisms of cC_alpha given by T}
From the proof of Lemma \ref{lm different x_alpha give different Cartan subalgebras}, we observe that the only automorphisms of $\cC' = \gG \cap \cay_{1,\alpha}(\cC^\CC)$ given by elements of $T$ are $\{ 1, -1 \}$ acting on $\RR(x_\alpha + \ol{x}_\alpha)$.
\end{remark}

On the other hand, given a real root $\alpha \in R_{re}(\gG,\cC)$ of $\cC$ and a non-zero element $x_\alpha \in (\gG^\CC)^\alpha$ of its associated root-space one has that $\theta x_\alpha$ is contained in $(\gG^\CC)^{-\alpha}$. Taking $x_\alpha$, one can define the {\it second Cayley transform}
\[
\cay_{2,\alpha} := \Ad(\exp \frac{\pi i}{4}(\theta x_\alpha-x_\alpha)) : \gG^\CC \to \gG^\CC.
\]
Given a Cartan subalgebra $\cC$ of $\gG$ with compact dimension $n$, the Cayley transform give us a new Cartan subalgebra $\cC''$ of compact dimension $n+1$
\[
\cC'' := \gG \cap \cay_{2,\alpha}(\cC^\CC) = \ker(\alpha |_{\cC}) \oplus \RR(x_\alpha + \theta x_\alpha).
\]
See \cite[Section VI.7]{knapp} for a detailed description of the Cayley transform.

We say that $\alpha,\beta \in R(\gG^\CC,\hH^\CC)$ are {\it strongly orthogonal} if 
\[
\alpha + \beta \notin R(\gG^\CC,\hH^\CC) \quad \text{and} \quad \alpha - \beta \notin R(\gG^\CC,\hH^\CC). 
\]
The importance of this definition lies on the fact that one can repeatedly apply the Cayley transform $\cay_1$ if we have a set of mutually strongly orthogonal roots. In order to make this statement clearer, set $I^+_{nc}(\gG,\cC_0) := I_{nc}(\gG,\cC_0) \cap R^+(\gG,\cC_0)$ to be the set of imaginary non-compact roots that are positive with respect to a certain lexicographic order. Following \cite{sugiura}, we say that the subset $B \subset I^+_{nc}(\gG,\cC_0)$ is an {\it admissible root system} if any two roots $\beta_i, \beta_j$ in $B$ are strongly orthogonal. 

Fix a maximally compact $\theta$-stable Cartan subalgebra $\cC_0$ and a non-zero element $x_\beta \in (\gG^\CC)^\beta$ for each $\beta \in \Delta(\gG,\cC_0)$ such that, for each element of the Weyl group $\omega \in W(\gG^\CC, \cC_0^\CC)$, one has that $\omega \cdot x_\beta = x_{\omega \cdot \beta}$ for each $\beta \in \Delta(\gG,\cC_0)$. For the admissible root system $B = \{\beta_1, \dots, \beta_\ell \}$, we define 
\[
\cay_B := \cay_{1,\beta_\ell} \circ \dots \circ \cay_{1,\beta_2} \circ \cay_{1,\beta_1}. 
\]
Using $\cay_B$, we set
\begin{equation} \label{eq definition of cC_B}
\cC_B := \gG \cap \cay_B (\cC_0^\CC),
\end{equation}
which can be described as
\begin{equation}
\label{ec Cartan subalgebra obtained from Cayley 1}
\cC_B = \bigcap_{\beta \in B} \ker (\beta|_{\cC_0}) \oplus \bigoplus_{\beta \in B} \RR(x_\beta + \ol{x}_\beta).
\end{equation}
One can check that a permutation of the sequence of Cayley transforms gives the same Cartan subalgebra. We write
\begin{equation} \label{eq definition of tT_B}
\tT_B := \cC_B \cap \hH = \bigcap_{\beta \in B} \ker (\beta |_\tT)
\end{equation}
and
\begin{equation} \label{eq definition of aA_B}
\aA_B := \cC_B \cap \mM = \aA_0 \oplus \bigoplus_{\alpha \in B} \RR(x_\beta + \ol{x}_\beta).
\end{equation}

Associated to a fixed $\theta$-stable maximally compact Cartan subalgebra $\cC_0$, we denote by $\Upsilon$ the set of all admissible root systems, that is
\begin{equation} \label{eq definition of Upsilon}
\Upsilon = \{ B \subset I^+_{nc}(\gG,\cC_0) \nr{ where every } \beta_i,\beta_j \in B \nr{ are strongly orthogonal}\}.
\end{equation}
We include the zero set in our definition, $\{ 0 \} \in \Upsilon$.

Take the Weyl groups $Y$ and $W$ given in \eqref{eq definition of Y} and \eqref{eq definition of W}, and recall from Remark \ref{rm W preserves I_nc} that $W$ can be understood as the subgroup of $Y$ that preserves the set of non-compact roots $I_{nc}(\gG,\cC_0)$. We say that two admissible root systems $B_1$ and $B_2$ are {\it conjugate} if there exists an element $\omega \in W$ such that 
\[
B_2 = \omega \cdot B_1.
\]

Conjugacy classes of admissible root systems classify real Cartan subalgebras.

\begin{lemma}[\cite{sugiura} Corollary 2 to Theorem 3 and Theorem 6] 
\label{lm Cayley transform give all Cartan subalgebras}
Every $\theta$-stable Cartan subalgebra of $\gG$ is conjugate by $H$ to $\cC_B$ for some admissible root system $B \in \Upsilon$. Furthermore, the set of conjugacy classes of admissible root systems $B \in \Upsilon$ is in one to one correspondence with the set of conjugacy classes of Cartan subalgebras of $\gG$. 
\end{lemma}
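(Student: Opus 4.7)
The plan is to proceed by induction on the vector dimension $d(\cC) := \dim_\RR(\cC \cap \mM)$ of a $\theta$-stable Cartan subalgebra $\cC$. For the base case $d(\cC) = 0$, the subalgebra $\cC$ is maximally compact, and by a classical conjugacy theorem (see for instance \cite[Chapter VI]{knapp}) any two maximally compact $\theta$-stable Cartan subalgebras of $\gG$ are $H$-conjugate, so $\cC$ is $H$-conjugate to $\cC_0 = \cC_{\{0\}}$. For the inductive step, when $d(\cC) > 0$, I would first establish the existence of at least one real root $\alpha \in R_{re}(\gG, \cC)$ of $\cC$. Granting this, the inverse of the second Cayley transform $\cay_{2,\alpha}^{-1}$ produces a $\theta$-stable Cartan subalgebra $\cC''$ with $d(\cC'') = d(\cC) - 1$, and the inductive hypothesis yields $h \in H$ with $\Ad(h)\,\cC'' = \cC_{B'}$ for some $B' \in \Upsilon$. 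Transporting the root $\alpha$ back through this identification, one obtains an imaginary non-compact root $\beta \in I^+_{nc}(\gG, \cC_0)$ that is strongly orthogonal to every element of $B'$, and $\cC$ itself becomes $H$-conjugate to $\cC_B$ with $B = B' \cup \{\beta\} \in \Upsilon$.

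For the bijection between conjugacy classes, one direction is an equivariance argument. If $B_2 = \omega \cdot B_1$ for some $\omega \in W$, then $\omega$ lifts to some $\tilde\omega$ in $N_H(T) = H \cap N_{H^\CC}(T^\CC)$, and since the construction of $\cay_B$ is equivariant under $\Ad(\tilde\omega)$ (by Remark \ref{rm W preserves I_nc}, $\omega$ permutes $I_{nc}^+(\gG,\cC_0)$ and respects strong orthogonality), we obtain $\Ad(\tilde\omega)\,\cC_{B_1} = \cC_{B_2}$. Conversely, suppose $\Ad(h)\,\cC_{B_1} = \cC_{B_2}$ for some $h \in H$. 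Since both sides share the same compact and non-compact parts, after multiplying $h$ by a suitable element of $\tT_B$ one can arrange that $\Ad(h)$ normalizes $\cC_0$, so its class lies in $Y$; by Remark \ref{rm W preserves I_nc}, this class belongs to $W$ because it preserves the decomposition \eqref{eq decomposition of R}. The induced $\omega \in W$ sends $B_1$ to $B_2$ once we absorb the $\{\pm 1\}$-ambiguity of Remark \ref{rm ZZ_2 are the automorphisms of cC_alpha given by T} coming from the sign of the representatives $x_\beta + \ol{x}_\beta$ in \eqref{eq definition of aA_B}.

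The main obstacle is the existence of a real root of $\cC$ whenever $d(\cC) > 0$, together with its identification with an extension of the already-constructed admissible system $B'$. One approach is to pick a regular element $a \in \cC \cap \mM$ so that $\zZ_\gG(a) = \cC$, and consider a maximal abelian subspace $\aA \subset \mM$ containing $a$: the restricted roots on $\aA$ yield non-trivial real roots of some $\theta$-stable Cartan subalgebra containing $\aA$, and a conjugation argument by $H$ transfers such a real root to $\cC$ itself. Once this structural input is in place, the verification that the iterated Cayley transforms produce a genuine admissible system — in particular, that the newly adjoined $\beta$ is strongly orthogonal to all of $B'$, so that $\cay_{1,\beta}$ commutes with the prior transforms — is a routine check using the defining property $[\,\cdot\,,\,\cdot\,]$-closure of the chosen root vectors and the $\theta$-stability of the intermediate Cartan subalgebras.
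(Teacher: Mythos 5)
The paper itself gives no proof of this lemma: it is imported verbatim from Sugiura (Corollary 2 to Theorem 3, and Theorem 6), so your attempt must be judged on its own terms. Your overall strategy --- classify $\theta$-stable Cartan subalgebras by descending Cayley transforms with respect to real roots and ascending ones with respect to imaginary non-compact roots --- is indeed the standard route (it is essentially Knapp, Ch.~VI.7, and Sugiura's own argument), but as written it has genuine gaps.

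The most serious one is structural. Your induction is on $d(\cC)=\dim_\RR(\cC\cap\mM)$ with base case $d(\cC)=0$, and the inductive step asserts that $d(\cC)>0$ forces the existence of a real root. That is false: a maximally compact $\theta$-stable Cartan subalgebra $\cC_0=\tT\oplus\aA_0$ has $\aA_0\neq 0$ whenever $\rk G>\rk H$ (e.g.\ $\gG=\lie{sl}(3,\RR)$), yet it has no real roots --- this is exactly the content of Remark \ref{rm Delta preserved by theta} via \cite[Prop.~6.70]{knapp}, and it is what makes \eqref{eq decomposition of R} hold. The correct dichotomy is ``maximally compact'' versus ``not maximally compact,'' and the induction should run on $d(\cC)-\dim_\RR\aA_0$. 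Relatedly, the input you correctly flag as ``the main obstacle'' --- that a non-maximally-compact $\theta$-stable Cartan subalgebra admits a real root --- is precisely \cite[Prop.~6.70]{knapp} and is the heart of the matter; your proposed derivation (pass to a maximal abelian subspace $\aA\subset\mM$, find real roots of a Cartan subalgebra containing $\aA$, then ``transfer'' one to $\cC$ by $H$-conjugation) is circular, since transporting a root of one Cartan subalgebra to another by conjugation presupposes the conjugacy you are trying to establish; moreover a regular element of $\gG$ need not exist inside $\cC\cap\mM$. By contrast, your observation that strong orthogonality of the adjoined root to $B'$ follows from commutativity of $\cC_B$ (the brackets $[x_\beta,x_\gamma]$, $[x_\beta,\ol{x}_\gamma]$ lie in distinct root spaces and must vanish separately) is sound.

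In the second half, the forward direction ($W$-conjugate systems give $H$-conjugate Cartan subalgebras) is fine given the paper's equivariant choice of the $x_\beta$. The converse is under-justified: from $\Ad(h)\cC_{B_1}=\cC_{B_2}$ with $h\in H$ you only get $\Ad(h)\tT_{B_1}=\tT_{B_2}$ and $\Ad(h)\aA_{B_1}=\aA_{B_2}$; the claim that ``multiplying $h$ by a suitable element of $\tT_B$'' makes $\Ad(h)$ normalize $\cC_0$ does not work (elements of $T_B$ centralize $\cC_B$, they do not move $\Ad(h)\tT$ back into $\tT$). The standard repair is to note that $\tT$ and $\Ad(h)\tT$ are both maximal abelian in $\zZ_{\hH}(\tT_{B_2})$ (using maximal compactness of $\cC_0$), hence conjugate by an element of $Z_H(\tT_{B_2})^0$, and then to check that this correction still carries $\aA_{B_1}$ to $\aA_{B_2}$; this is where Sugiura's Theorem 6 does real work. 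As it stands, your write-up is a correct roadmap with the two key lemmas (existence of a real root off the maximally compact stratum, and the normalizer argument for the converse) left unproved, so it does not yet constitute a proof independent of the citation.
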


\begin{remark}
We find in \cite{sugiura} a case by case description of admissible root systems for all real semisimple Lie algebras. 
\end{remark}

Denoting by $|A|$ the cardinality of a set $A$, we say that the admissible root system $D$ is {\it maximal} in $\Upsilon$ if $|D| \geq |B|$ for every $B \in \Upsilon$. Taking $D$ maximal, one has that $\cC = \tT \oplus \aA$ is a maximally non-compact $\theta$-stable Cartan subalgebra and $\aA$ is a maximal abelian subspace of $\mM$.


\subsubsection{Some results on Weyl groups}

Given an admissible root system $B \in \Upsilon$ and its associated Cayley transform $\cay_B$, we set $Y_B$ to be the Weyl group $W(\gG^\CC, \cC_B^\CC)$ associated to $\cC_B^\CC$. Note that
\begin{equation} \label{eq definition of Y_B}
Y_B = \cay_B \circ \, Y \circ \cay_B^{-1},
\end{equation}
and, obviously, $Y_B$ is isomorphic to $Y$. 

\begin{remark} \label{rm W_B is the normalizer of aA_B}
Recall that $W$ can be understood as the subgroup of $Y$ that preserves the splitting $\cC_0^\CC = \tT^\CC \oplus \aA_0^\CC$. Then, the subgroup of $Y_B$ that preserves the splitting $\cC_B^\CC = \tT_B^\CC \oplus \aA_B^\CC$ is contained in the image of $W$. In other words, 
\[
N_{Y_B}(\tT_B^\CC) = N_{Y_B}(\aA_B^\CC) \subset \cay_B \circ \, W \circ \cay_B^{-1}. 
\]
\end{remark}

For any admissible root system $B \in \Upsilon$, define the group
\begin{equation}
\label{eq definition of Gamma_B}
\Gamma_B := \prod_{\alpha \in B} \alpha \bullet \{ 1, -1 \},
\end{equation}
where $\bullet$ denotes a formal product. We can naturally define an action of $\Gamma_B$ on $\aA_B$ by setting the element $\alpha \bullet 1$ to act trivially on $\aA_B$, while $\alpha \bullet (-1)$ acts trivially on $\aA_0 \oplus \bigoplus_{\alpha \in B, \beta \neq \alpha} \RR(x_\beta + \ol{x}_\beta)$ and sends
\begin{equation}
\label{eq action of Gamma on aA_B}
\map{\RR(x_\alpha + \ol{x}_\alpha)}{\RR(x_\alpha + \ol{x}_\alpha)}{s}{-s.}{}
\end{equation}

\begin{remark}
\label{rm Gamma_B are the automorphisms of cC_B given by T}
It follows from Remark \ref{rm ZZ_2 are the automorphisms of cC_alpha given by T} that $\Gamma_B$ is the group of automorphisms of $\cC_B$ given by conjugation by $T$.
\end{remark} 

Take  
\begin{equation} \label{eq definition of Z_W B}
Z_W(B) = \{ \omega \in W \text{ such that } \omega \cdot \beta \in B \text{ for all } \beta \in B \},
\end{equation}
and note that one can define an action of $Z_W(B)$ on $\Gamma_B$ as follows,
\[
\omega \cdot (\alpha \bullet (\pm 1)) = (\omega \cdot \alpha) \bullet (\pm 1).
\]
Define $\Gamma_B \rtimes Z_W(B)$ and note that it acts naturally on $\aA_B$, where the action of $Z_W(B)$ on $\aA_B = \aA_0 \oplus \bigoplus_{\alpha \in B} \RR(x_\alpha + \ol{x}_\alpha)$ is given by the natural action of $W$ on $\aA_0$ and sending the ray $\RR(x_\alpha + \ol{x}_\alpha)$ to the ray $\RR(x_{\omega \cdot \alpha} + \ol{x}_{\omega \cdot \alpha})$. 
 
\begin{lemma}
\label{lm W_B cong Gamma_B rtimes Z_W B}
We can identify the semidirect product $\Gamma_B \rtimes Z_W(B)$ with a subgroup of $N_{Y_B}(\aA_B)$ and the action of $N_{Y_B}(\aA_B)$ on $\aA_B$ is completely determined by the action of this subgroup.
\end{lemma}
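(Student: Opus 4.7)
The strategy has two parts: first, construct explicit embeddings of $\Gamma_B$ and $Z_W(B)$ into $N_{Y_B}(\aA_B)$ whose product realises the prescribed semidirect structure; second, use Remark \ref{rm W_B is the normalizer of aA_B} to show that every element of $N_{Y_B}(\aA_B)$ acts on $\aA_B$ in the same way as some element of this subgroup.

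For $\Gamma_B$, I would invoke Remark \ref{rm Gamma_B are the automorphisms of cC_B given by T}: each generator is $\Ad_t$ for a suitable $t\in T$, which normalises $\cC_B^\CC$, preserves the splitting $\cC_B^\CC = \tT_B^\CC\oplus\aA_B^\CC$, and acts by $\pm 1$ on each line $\RR(x_\beta+\ol x_\beta)$, so the image of $T$ in $Y_B = N_{G^\CC}(C_B^\CC)/C_B^\CC$ lies in $N_{Y_B}(\aA_B)$ and equals $\Gamma_B$. For $Z_W(B)$, I would use the identification $Y_B = \cay_B \circ Y \circ \cay_B^{-1}$ from \eqref{eq definition of Y_B} and send $\omega \mapsto \cay_B \omega \cay_B^{-1}$. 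By Remark \ref{rm W preserves I_nc}, $\omega$ preserves $\aA_0^\CC$ and $\tT^\CC$ separately; Weyl equivariance $\omega\cdot h_\beta = h_{\omega\cdot\beta}$ together with $\omega\cdot B = B$ shows that $\omega$ stabilises $\langle h_\beta : \beta\in B\rangle_\RR \subset \tT^\CC$; and since $\cay_B$ sends $\aA_0\oplus\langle h_\beta\rangle_\RR$ onto $\aA_B$, the conjugated element normalises $\aA_B$ and permutes its rays as prescribed. The semidirect product structure is then confirmed by a short calculation inside $G^\CC$: for $t=\exp(s)\in T$ representing a generator of $\Gamma_B$ and any lift $n$ of $\omega\in Z_W(B)$ to $N_{H^\CC}(T^\CC)$, one has $ntn^{-1}=\exp(\Ad_n s)$, which acts on $x_\beta$ by the scalar $e^{(\omega^{-1}\cdot\beta)(s)}$, reproducing exactly the formula $\omega\cdot(\alpha\bullet(\pm 1))=(\omega\cdot\alpha)\bullet(\pm 1)$ stated before the lemma.

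The main obstacle is the second assertion. By Remark \ref{rm W_B is the normalizer of aA_B}, any $\tilde\omega\in N_{Y_B}(\aA_B)$ has the form $\cay_B \omega \cay_B^{-1}$ for some $\omega\in W$ (the orthogonality of $\tT_B$ and $\aA_B$ with respect to the Killing form, inherited from the Cartan decomposition, ensures the equality $N_{Y_B}(\aA_B^\CC) = N_{Y_B}(\tT_B^\CC)$ used there). The condition $\tilde\omega(\aA_B) = \aA_B$ then translates, via the identification $\cay_B^{-1}(\aA_B) = \aA_0 \oplus \langle h_\beta\rangle_\RR$ together with the separate preservation of $\aA_0^\CC$ and $\tT^\CC$ by $\omega$, into $\omega(\langle h_\beta\rangle_\RR) = \langle h_\beta\rangle_\RR$. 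Using $\omega\cdot h_\beta = h_{\omega\cdot\beta}$ this forces $\omega\cdot B \subset B \cup (-B)$, and the resulting action on $\aA_B$ decomposes as a genuine permutation of the rays $\RR(x_\beta+\ol x_\beta)$ (realised by an element of $Z_W(B)$) composed with sign changes on each ray (realised by an element of $\Gamma_B$). Hence the image of $N_{Y_B}(\aA_B)$ in $\Aut(\aA_B)$ coincides with that of $\Gamma_B \rtimes Z_W(B)$, which completes the proof.
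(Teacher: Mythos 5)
Your proof is correct and follows essentially the same route as the paper's: both identify $\Gamma_B$ and $\cay_B \circ Z_W(B) \circ \cay_B^{-1}$ as a subgroup of $N_{Y_B}(\aA_B)$ realising the semidirect product, and then invoke Remark \ref{rm W_B is the normalizer of aA_B} to write an arbitrary element of $N_{Y_B}(\aA_B)$ as $\cay_B \circ \omega \circ \cay_B^{-1}$ with $\omega \in W$ and conclude that it permutes the rays $\RR(x_\beta + \ol{x}_\beta)$ up to sign. The only differences are cosmetic: the paper realises $\Gamma_B$ via the reflections in the strongly orthogonal roots of $B$ rather than via $\Ad_t$ for $t \in T$ (these induce the same automorphisms of $\cC_B^\CC$, hence the same elements of $Y_B$), and your explicit handling of the possibility $\omega \cdot \alpha \in -B$ is in fact slightly more careful than the paper's direct assertion that $\omega \cdot \alpha \in B$.
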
 
 
\begin{proof}
Note that $\Gamma_B$ is a commutative normal subgroup of $Y_B$, since it is given by the reflections associated to strictly-orthogonal (and thereore orthogonal) roots $\beta_1, \dots, \beta_\ell \in B$ and also $\Gamma_B$ preserves $\aA_B$, so $\Gamma_B \subset N_{Y_B}(\aA_B)$. Then, it is clear that the subgroup generated by $\Gamma_B$ and $\cay_B \circ \, Z_W(B) \circ \cay_B^{-1}$ is a subgroup of $N_{Y_B}(\aA_B)$. We can identify this subgroup with $\Gamma_B \rtimes Z_W(B)$. 

By Remark \ref{rm W_B is the normalizer of aA_B}, every element of $N_{Y_B}(\aA_B)$ is of the form $\cay_B \circ \, \omega \circ \cay_B^{-1}$ with $\omega \in W$. The action of this element sends the rays of the form $\RR(x_\alpha + \ol{x}_\alpha)$ with $\alpha \in B$ to the rays $\RR(x_{\omega \cdot \alpha} + \ol{x}_{\omega \cdot \alpha})$, possibly changing the orientation of the ray. Then, $\omega$ is contained in $Z_W(B)$ since $\omega \cdot \alpha \in B$ as well. Then the subgroup generated by $\Gamma_B$ and $\cay_B \circ \, Z_W(B) \circ \cay_B^{-1}$ is the whole $N_{Y_B}(\aA_B)$, and the result follows.
\end{proof}

\subsubsection{Parabolic subgroups and antidominant characters}

Let $H^\CC$ be a complex reductive Lie group with Lie algebra $\hH^\CC$ being the complexification of some compact Lie algebra $\hH$ and take a Cartan subalgebra $\tT^\CC = \zZ_{\hH^\CC}(\hH^\CC) \oplus \tT_{ss}^\CC$ of $\hH^\CC$. Let $\langle \, , \, \rangle$ be the Killing form extended to $\hH^\CC$.

For any subset $A \subset \Delta(\hH^\CC,\tT^\CC)$ we define $R_A$ to be the subset of $R(\hH^\CC, \tT^\CC)$ whose elements have the form $\alpha = \Sigma_{\beta \in \Delta} m_\beta \beta$ with $m_\beta \geq 0$ for all $\beta \in A$. We define the subalgebra
\[
\lie{p}_A: = \tT^\CC \oplus \bigoplus_{\delta \in R_A} (\hH^\CC)^{\delta}.
\]
We call the connected subgroup $P_A \subset H^\CC$ with Lie algebra $\lie{p}_A$, a {\it standard parabolic subgroup} and we refer to any subgroup conjugate to $P_A$ as a {\it parabolic subgroup}. 

We define $R^0_A \subset R_A$ as the set of roots of the form $\alpha = \sum_{\beta \in \Delta} m_\beta \beta$ such that $m_\beta = 0$ for every $\beta \in A$. Take the subalgebra of $\lie{p}_A$,
\[
\lL_A: = \zZ^\CC \oplus \tT^\CC \oplus \bigoplus_{\alpha \in R^0_A} (\hH^\CC)^\alpha.
\]
The Lie connected subgroup $L_A \subset P_A$ with Lie algebra $\lL_A$ is called the {\it Levi subgroup} of $P_A$. 


Let $\Lambda_Z$ be the kernel of the exponential map restricted to $\zZ_{\hH^\CC}(\hH^\CC)$. Define $\zZ_\RR$ to be $\Lambda_Z \otimes_\ZZ \RR \subset \zZ_{\hH^\CC}(\hH^\CC)$ and consider the dual space $\Hom_\RR(\zZ_\RR,i\RR)$. 
For every $\alpha \in R(\hH^\CC, \tT^\CC)$ we define its {\it coroot} as $\hat{\alpha} := \frac{2 \alpha^*}{\langle \alpha , \alpha \rangle}$, where $\alpha^* \in \tT^\CC$ is such that $\alpha = \langle \cdot, \alpha^* \rangle$. Taking the dual with respect to the Killing form of $\hat{\alpha}$, we define $\lambda_\alpha$ to be the {\it fundamental weight} associated to $\alpha$. Note that the fundamental weights are elements of $(\tT^\CC)^*$ and, by construction, $\lambda|_{\zZ_{\hH^\CC}(\hH^\CC)} = 0$. 

Let $A$ be a subset of $\Delta(\hH^\CC, \tT^\CC)$ and let $\pP_A$ be the standard parabolic subalgebra associated to it. An {\it antidominant character} of $\pP_A$ is any element of $(\tT^\CC)^*$ of the form
\[
\chi = \delta + \sum_{\alpha \in A} n_\delta \lambda_\alpha,
\]
where $\delta \in \Hom(\zZ_\RR,i\RR)$ and each $n_\alpha$ is non-positive. If further we have $n_\alpha < 0$ for every $\alpha \in A$, the character is {\it strictly antidominant}. 

\begin{remark} \label{rm exponentiation of characters}
Since $L_A$ is a connected complex reductive Lie group, denoting by $Z_L$ the connected component of its centre, and $L_A^{ss} = [L_A, L_A]$ its semisimple part, one has $L_A \cong Z_L \times_F L_A^{ss}$, where $F$ is some finite group. This gives a map $L_A \to Z_L/F$, and composing with $P_A \to L_A$, one has the morphism of Lie groups \[
\pi_A : P_A \to Z_L / F.
\]
It the follows that not every character of the Lie algebra $\pP_A$ exponentiates to the associated parabolic group $P_A$, but, since $F$ is finite, we know that for every character $\chi$ of $\pP_A$, there exists $n \in \ZZ$ such that $\chi^n$ exponentiates to a character of the group $P_A$. Then, the characters of $\pP_A$ which exponentiate generate (as a subset of a vector space) the space of all characters of $\pP_A$.
\end{remark}

To any character $\chi$, we associate $s_\chi \in \tT^\CC$, its representative via the Killing form. Note that the roots of $\hH^\CC$ take pure imaginary values on $\hH$ since $\ad h$ with $h \in \tT$ is skew-symmetric with respect to the Killing form. This ensures that $s_\chi$ belongs to $i\hH_{ss}$.

\begin{lemma}
\label{lm pPA in pP- y lLa in lL-}
Let $s \in i \hH_{ss}$. Define the sets
\begin{align*}
& \pP_{s}: = \{ x \in \hH^\CC \nr{ such that } \Ad(e^{t s} ) x \nr{ remains bounded as } \RR \ni t \to \infty \},
\\
& \lL_s: = \{ x \in \hH^\CC \nr{ such that } [x,s] = 0 \},
\\
& P_s: = \{ g \in H^\CC \nr{ such that } e^{t s} g e^{-t s}  \nr{ is bounded as } \RR \ni t \to \infty \},
\\ 
& L_s: = \{ g \in H^\CC \nr{ such that } \Ad(g)(s) = s \}.
\end{align*}

The following properties hold:
\begin{enumerate}
 \item \label{it 1} Both $\pP_s$ and $\lL_s$ are Lie subalgebras of $\hH^\CC$ and $P_s$ and $L_s$ are connected subgroups of $H^\CC$. 
 \item \label{it 2} Let $\widetilde{\chi}$ be an antidominant character of $P_A$. Then there exists $s_\chi$ for which we have inclusions $\pP_A \subset \pP_{s_\chi}$, $\lL_A \subset \lL_{s_\chi}$, $P_A \subset P_{s_\chi}$ and $L_A \subset L_{s_\chi}$, with equality if $\chi$ is strictly antidominant. Furthermore $\chi$ is a strictly antidominant character of $\pP_{s_\chi}$.
 \item \label{it 3} For any $s \in i \hH$ there exists $h \in H$ and a standard parabolic subgroup $P_A$ such that $P_s = h P_A h^{-1}$ and $L_s = hL_A h^{-1}$. Furthermore, there is a strictly antidominant character $\chi$ of $P_A$ such that $s = h s_\chi h^{-1}$. 
\end{enumerate}
\end{lemma}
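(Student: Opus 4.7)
My plan is to handle the three parts in order, relying throughout on the observation that for $s \in i\hH$ the operator $\ad s$ acting on $\hH^\CC$ is diagonalisable with real eigenvalues. This is because the Killing form is negative definite on the compact form $\hH$, so after extending to a Hermitian form on $\hH^\CC$ fixed by the conjugation relative to $\hH$, $\ad h$ is skew-Hermitian for $h \in \hH$ and hence $\ad(ih)$ is Hermitian. I therefore work with the bracket-compatible eigenspace decomposition
\[
\hH^\CC = \bigoplus_{\mu \in \RR} V_\mu(s), \qquad V_\mu(s) = \ker(\ad s - \mu \, \text{id}),
\]
on which $\Ad(e^{ts})$ acts by the scalar $e^{t\mu}$. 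Boundedness as $t \to +\infty$ is equivalent to $\mu \leq 0$, so part (1) follows at the algebra level by taking $\pP_s = \bigoplus_{\mu \leq 0} V_\mu(s)$ and $\lL_s = V_0(s)$, both visibly Lie subalgebras. At the group level I would identify $L_s$ with the centraliser $Z_{H^\CC}(s)$, which is connected because $s$ is semisimple in the connected reductive group $H^\CC$, and $P_s$ with the attracting parabolic of the one-parameter subgroup $t \mapsto e^{ts}$, which is a connected parabolic with Lie algebra $\pP_s$.

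For part (2), I take an antidominant character $\chi = \delta + \sum_{\alpha \in A} n_\alpha \lambda_\alpha$ and its Killing dual $s_\chi \in \tT^\CC$. Because the central summand $\delta$ contributes an element of $\zZ_{\hH^\CC}(\hH^\CC)$ on which every root vanishes, for any root $\beta = \sum_{\gamma \in \Delta} m_\gamma \gamma$ I get $\beta(s_\chi) = \sum_{\alpha \in A} n_\alpha \, \beta(s_{\lambda_\alpha})$. Using the duality between fundamental weights and simple coroots one checks that $\beta(s_{\lambda_\alpha})$ is a positive multiple of the coefficient $m_\alpha$, so $\beta(s_\chi) \leq 0$ for every $\beta \in R_A$ (since $m_\alpha \geq 0$ for $\alpha \in A$ and $n_\alpha \leq 0$). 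The decomposition of the previous paragraph then places $(\hH^\CC)^\beta$ inside $\pP_{s_\chi}$ for every $\beta \in R_A$, giving $\pP_A \subset \pP_{s_\chi}$; the same computation restricted to $R^0_A$ gives $\lL_A \subset \lL_{s_\chi}$, and exponentiation (using connectedness from part (1)) supplies the group-level inclusions. When $\chi$ is strictly antidominant, $n_\alpha < 0$ for every $\alpha \in A$, so $\beta(s_\chi) = 0$ forces $m_\alpha = 0$ for all $\alpha \in A$, i.e.\ $\beta \in R^0_A$; the inclusions therefore become equalities, and the statement that $\chi$ is strictly antidominant as a character of $\pP_{s_\chi}$ is read off the same formula.

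For part (3), I start with an arbitrary $s \in i\hH$ and conjugate by some $h_1 \in H$ into $i\tT$, using that every element of the compact form $\hH$ is $H$-conjugate to an element of the maximal torus $\tT$. A further conjugation by a representative $h_2$ of a suitable element of $W(H,T) = N_H(T)/T$ pushes the result into the closed antidominant Weyl chamber, producing $s'' := h_2^{-1} h_1^{-1} s \, h_1 h_2 \in i\tT$ with $\alpha(s'') \leq 0$ for every $\alpha \in \Delta(\hH^\CC, \tT^\CC)$. Setting $A := \{\alpha \in \Delta : \alpha(s'') < 0\}$ and $\chi := \langle \cdot, s''\rangle|_{\tT^\CC}$, the decomposition of $s''$ into its central and semisimple components exhibits $\chi$ in the form $\delta + \sum_{\alpha \in A} n_\alpha \lambda_\alpha$ with each $n_\alpha < 0$, so $\chi$ is a strictly antidominant character of $P_A$. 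Part (2) then delivers $P_{s''} = P_A$, $L_{s''} = L_A$ and $s'' = s_\chi$, and setting $h := h_1 h_2 \in H$ yields the three conjugacies claimed.

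The genuinely delicate ingredient is the sign computation for $\beta(s_\chi)$ in part (2): I will need to be careful with the paper's conventions for coroots, fundamental weights, and the Killing duality $\chi \leftrightarrow s_\chi$, and to confirm that central contributions do not spoil the inclusions (in particular that $s_\chi$ does lie in $i\hH_{ss}$ as claimed earlier in the text). Parts (1) and (3) are otherwise direct applications of standard structure theory for connected reductive and compact Lie groups and should present no obstacle.
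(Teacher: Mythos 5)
Your proposal is correct and follows essentially the same route as the paper's proof: the same root-space/eigenvalue computation $\beta(s_\chi)=\sum_{\alpha\in A} n_\alpha m_\alpha\langle\alpha,\alpha\rangle/2$ drives part (2), and part (3) is the same conjugation into a (anti)dominant chamber followed by an appeal to part (2). The only cosmetic difference is in part (1), where the paper proves connectedness of $P_s$ by the explicit retraction $t\mapsto e^{ts}ge^{-ts}$ onto $L_s$ (itself connected as a centralizer of a torus, by Borel), whereas you cite the standard fact that the attracting set of a one-parameter subgroup is a parabolic; both are fine.
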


\begin{proof}
This result is contained in \cite[Lemma 2.5]{oscar&ignasi&gothen}, although we provide a proof for the sake of completeness.

One has from the definitions that $\pP_s$ and $\lL_s$ are subalgebras and $P_s$ and $L_s$ groups. Take $T_s$ to be the closure of $\{ e^{i ts} \, | \, t \in \RR \}$. Then $L_s$ is the centralizer of $Z_{H^\CC}(T_s)$ so it is connected by \cite[Th. 13.2]{borel1}. To prove that $P_s$ is connected note that, if $g \in P_s$ ({\it i.e.} $e^{ts} g e^{-ts}$ bounded as $t \to \infty$) then the limit exists, and we denote it by $\pi_s(g)$. Since it is a limit, it follows that $\pi_s(g) \in L_s$. This gives a morphism of Lie groups $\pi_s : P_s \to L_s$ that can be identified with the projection $P_s \to P_s/U_s \cong L_s$, where
\[
U_s := \{ g \in H^\CC \text{ such that } e^{ts} g e^{-ts} \text{ converges to } 1 \text{ as } t \to \infty \} \subset P_S
\]
is the unipotent radical of $P_s$. Then, for every $g \in P_s$, the map $\gamma : [0, \infty ) \to H^\CC$, defined as $\gamma(t) = e^{ts} g e^{-ts}$, extends to give a path from $g$ to $L_s$. Since $L_s$ is connected, it follows that $P_s$ is connected as well. This proves the first statement.

Let $\chi = \delta + \Sigma n_\alpha \lambda_\alpha$ be an antidominant character of $P_A$. Let $\beta = \Sigma m_\alpha \alpha$ be a root and take $u \in \hH_\alpha$. One has $[s_{\chi}, u] = \langle s_{\chi}, \beta \rangle u = \langle \chi, \beta \rangle u = \left ( \Sigma m_\alpha n_\alpha \langle \alpha, \alpha \rangle / 2 \right ) u$. Hence $\Ad(e^{t s_\chi})(u) = \left ( \Sigma \exp(t m_\alpha n_\alpha \langle \alpha, \alpha \rangle /2) \right ) u$, so this remains bounded as $t\to \infty$ if $m_\alpha \geq 0$ for any $\alpha$ such that $n_\alpha \leq 0$. This implies that $\pP_A \subset \pP_s$ and $\lL_A \subset \lL_s$, the inclusions being equalities when $\chi$ is strictly antidominant. The analogous results for $P_A \subset P_s$ and $L_A \subset L_s$ follow from this and the fact that they are connected. This finishes the proof of the second statement. 

To prove the third statement take a maximal torus $T_s$ containing $\{ e^{i ts} \, | \, t \in \RR \}$ and choose $h \in H$ such that $h^{-1} T_s h = T$ and $\Ad(h^{-1})(s)$ belongs to the Weyl chamber in $\tT$ corresponding to the choice of $\Delta(\hH^\CC, \tT^\CC)$. The proof follows from \eqref{it 2}.
\end{proof}


\begin{lemma}
\label{lm chi circ theta is antidominant}
Take a maximally compact $\theta$-stable Cartan subalgebra and a lexicographic order as in Remark \ref{rm Delta preserved by theta}. Let $\pP$ be a standard parabolic subalgebra of $\hH^\CC$ and let $\chi$ be an antidominant character of $\pP$. If $\pP_A$ is preserved by $\theta$, then $\theta\chi := \chi \circ \theta$ is an antidominant character of $\pP_A$. 
\end{lemma}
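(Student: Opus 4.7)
The plan is to apply $\theta$ to the canonical decomposition of an antidominant character and verify that each ingredient is preserved. Writing $\chi = \delta + \sum_{\alpha \in A} n_\alpha \lambda_\alpha$ with $\delta \in \Hom(\zZ_\RR, i\RR)$ and $n_\alpha \leq 0$, one has $\theta\chi = \theta\delta + \sum_{\alpha \in A} n_\alpha \,\theta\lambda_\alpha$, so the task reduces to understanding how $\theta$ acts on the centre piece and on the fundamental weights, and to rewriting $\theta\chi$ in the antidominant form for $\pP_A$.

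The first step would be to show that $\theta$ restricts to an involutive bijection $\theta\colon A \to A$. Since $\theta$ is a Lie algebra automorphism preserving $\tT^\CC$, it acts on $R(\hH^\CC, \tT^\CC)$ via $\theta\alpha := \alpha \circ \theta$; by Remark \ref{rm Delta preserved by theta} the lexicographic order has been chosen so that this action preserves the positive roots, and hence the simple roots $\Delta(\hH^\CC, \tT^\CC)$. Because standard parabolic subalgebras of $\hH^\CC$ are parametrised by subsets of $\Delta(\hH^\CC, \tT^\CC)$, the $\theta$-stability of $\pP_A$ forces $\theta(A) = A$.

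The second step would be to identify $\theta\lambda_\alpha$. Being a Lie algebra automorphism, $\theta$ is an isometry of the Killing form on the semisimple part of $\hH^\CC$; consequently it sends $\alpha^*$ to $(\theta\alpha)^*$ and, after the normalisation in the definition $\hat{\alpha} = 2\alpha^*/\langle \alpha,\alpha\rangle$, coroots to coroots, i.e.\ $\theta\hat\alpha = \widehat{\theta\alpha}$. Dualising via the Killing form (and using that an isometric involution is self-adjoint) yields $\theta\lambda_\alpha = \lambda_{\theta\alpha}$. Substituting this in and reindexing by $\beta = \theta\alpha$ via the bijection from the first step, I would then obtain
\[
\theta\chi \;=\; \theta\delta \;+\; \sum_{\beta \in A} n_{\theta\beta}\, \lambda_\beta,
\]
with each coefficient $n_{\theta\beta} \leq 0$. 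For the centre piece, $\theta$ preserves $\zZ_{\hH^\CC}(\hH^\CC)$ (as an automorphism) and is compatible with the exponential map, so it preserves the cocharacter lattice $\Lambda_Z$ and the real form $\zZ_\RR = \Lambda_Z \otimes_\ZZ \RR$; since $\theta$ is $\RR$-linear on $\zZ_\RR$ and $\delta(\zZ_\RR) \subset i\RR$, the composition $\theta\delta = \delta \circ \theta$ still lies in $\Hom(\zZ_\RR, i\RR)$. This completes the verification that $\theta\chi$ has the required antidominant shape on $\pP_A$.

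The main delicate point is the identification $\theta\lambda_\alpha = \lambda_{\theta\alpha}$, which depends on $\theta$ being an isometry of the Killing form together with a careful unwinding of the definition of $\lambda_\alpha$ as the Killing-dual of the coroot $\hat\alpha$. Everything else amounts to bookkeeping with signs and indices once the bijection $\theta\colon A \to A$ and the compatibility $\theta\hat\alpha = \widehat{\theta\alpha}$ are in place.
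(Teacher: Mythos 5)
Your proposal is correct and follows the same route as the paper: the paper's proof simply notes that by Remark \ref{rm Delta preserved by theta} the involution $\theta$ preserves $\Delta(\hH^\CC,\tT^\CC)$, hence $\theta(A)\subset A$, and declares the remaining verification ``trivial''. What you have done is spell out that trivial part --- the permutation of the fundamental weights $\theta\lambda_\alpha=\lambda_{\theta\alpha}$ via the Killing-form isometry, the reindexing of the non-positive coefficients, and the stability of $\Hom(\zZ_\RR,i\RR)$ under $\theta$ --- which is a faithful elaboration rather than a different argument.
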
 
 
\begin{proof}
Note that in the context of Remark \ref{rm Delta preserved by theta}, $\theta$ preserves the set of simple roots $\Delta(\hH^\CC, \tT^\CC)$. If $\pP_A$ is preserved by $\theta$, then $\theta(A) \subset A$. Then it is trivial to see that $\theta \chi$ is antidominant as well.
\end{proof}






\subsection{$G$-Higgs bundles on Riemann surfaces}

Let $\Sigma$ be a compact Riemann surface and denote by $\Omega^1_\Sigma$ its canonical bundle. Let $G$ be a connected real form of the complex semisimple Lie group $G^\CC$. Let $H \subset G$ be a maximal compact subgroup. Note that $H^\CC$ is a connected complex reductive Lie group.

A {\it $G$-Higgs bundle} over $\Sigma$ is a pair $(E,\Phi)$ where $E$ is a holomorphic $H^\CC$-bundle over $\Sigma$ and $\Phi$, called the {\it Higgs field}, is a holomorphic section of $E(\mM^\CC) \otimes \Omega^1_\Sigma$, where $E(\mM^\CC)$ is the vector bundle associated to the isotropy representation. Two $G$-Higgs bundles $(E,\Phi)$ and $(E',\Phi')$ are {\it isomorphic} if there exists an isomorphism of $H^\CC$-bundles $f: E \to E'$ such that $(f \otimes \id)^*\Phi' = \Phi$.


\begin{remark} \label{rm MmH is MH^CC}
The Cartan decomposition of the Lie algebra $\hH$ of a compact Lie group $H$ is $\hH = \hH \oplus 0$, so $\mM^\CC = 0$. It follows that a $H$-Higgs bundle is the same thing as a principal $H^\CC$-bundle..
\end{remark}

\begin{remark}
The Cartan decomposition of the Lie algebra $\hH^\CC$ of a complex reductive Lie group $H^\CC$ is $\hH^\CC = \hH \oplus i \hH$, so $\mM^\CC = \hH^\CC$. A $H^\CC$-Higgs bundle is a pair $(E,\Phi)$ where $E$ is a principal $H^\CC$-bundle and $\Phi \in H^0(\Sigma,E(\hH^\CC) \otimes \Omega^1_\Sigma)$.
\end{remark}

Let $S$ be an affine scheme $S$ and denote by $p : \Sigma \times S \to \Sigma$ the natural projection. We say that an {\it $S$-family} of $G$-Higgs bundle is a pair $(E_S, \Phi_S)$, where $E_S$ is a principal $H^\CC$-bundle over $\Sigma \times S$ (i.e. an $S$-family of principal $H^\CC$-bundles) and $\Phi_S$ is an element of $H^0(\Sigma \times S, E_S(\mM^\CC) \otimes p^*\Omega^1_\Sigma)$. Two $S$-families of $G$-Higgs bundles $(E_S,\Phi_S)$ and $(E'_S,\Phi'_S)$ are {\it isomorphic} if there exists an isomorphism of $H^\CC$-bundles $f_S: E_S \to E'_S$ such that $(f_S \otimes \id)^*\Phi_S' = \Phi_S$.

As is well known, in order to define a good moduli problem for the classification of $G$-Higgs bundles, one needs to introduce the notion of semistability.

The Killing form on $\lie{g}$ induces a Hermitian structure on $\lie{m}^\CC$ which is preserved by the action of $H^\CC$. This allows us to define the complex subspace
\begin{equation} \label{eq definition of mM^-}
(\mM^\CC)^{-}_{\chi} := \{ x \in \mM^\CC \nr{ such that } \iota(e^{t s_{\chi}} ) x \nr{ remains bounded as } \RR \ni t \to \infty \}.
\end{equation}
Let $E$ be a holomorphic $H^\CC$-bundle and $\sigma$ a holomorphic section of $E(H^\CC/P_A )$, i.e. a reduction of the structure group giving the $P_A$-bundle $E_\sigma$. We see that $(\mM^\CC)^-_\chi$ is invariant under the action of $P_{s_\chi}$ and by Lemma \ref{lm pPA in pP- y lLa in lL-} we have that $P_A \subset P_{s_\chi}$. Then we define 
\[
E(\mM^\CC)^{-}_{\sigma,\chi} := E_\sigma \times_{P_A} (\mM^\CC)^{-}_\chi.
\]

Now define
\[
(\mM^\CC)^0_\chi := \left \{ x \in \mM^\CC \nr{ such that } [s_\chi, x] = 0 \right \}. 
\]
This subspace is invariant under $L_{s_\chi}$ and hence under $L_A$ by Lemma \ref{lm pPA in pP- y lLa in lL-}. Suppose that $\sigma_L$ is a reduction of the structure group of $E_\sigma$ giving the $L_A$-bundle $E_{\sigma_L}$. Let us set
\[
E(\mM^\CC)^{0}_{\sigma_L,\chi} := E_{\sigma_L} \times_{L_A} (\mM^\CC)^{0}_\chi \subset E(\mM^\CC)^{-}_{\sigma,\chi}.
\]

Given a $H^\CC$-bundle $E$ with a reduction of the structure group $\sigma$ to the parabolic subgroup $P_A$ and an antidominant character $\chi$ of $\pP_A$, we define the degree of $E$ with respect to $\sigma$ and $\chi$ as in \cite[Section 5]{garcia-prada&peon&ramanan},
\[
\deg_{\sigma, \chi}(E) := \frac{1}{n}\deg \left ( \widetilde{\chi^n}_*E_\sigma \right ),
\]
where, following Remark \ref{rm exponentiation of characters}, $\chi^n$ is a character of $\pP_A$ that exponentiates to $P_A$.

We say that the $G$-Higgs bundle $(E,\Phi)$ is {\it semistable} (resp. {\it stable}) if for any parabolic subgroup $P_A \subset H^\CC$, any antidominant character $\chi$ of $\lie{p}_A$, and a reduction of the structure group $\sigma$ to the parabolic subgroup $P_A$ such that $\Phi \in H^0(\Sigma,E(\mM^\CC)^{-}_{\sigma,\chi} \otimes \Omega^1_\Sigma)$, we have
\[
\deg_{\sigma, \chi}(E) \geq 0 \qquad (\nr{resp. } \deg_{\sigma, \chi}(E) > 0). 
\]
Also, we say that $(E,\Phi)$ is {\it polystable} if it is semistable and for any $P_A$, $\chi$ and $\sigma$ as above, such that
$\Phi \in H^0(X,E(\mM^\CC)^{-}_{\sigma,\chi})$, $P_A \neq H^\CC$ and $\chi$ is strictly antidominant, and such that
\[
\deg_{\sigma, \chi}(E) = 0,
\]
there is a holomorphic reduction of the structure group $\sigma_L$ to the associated Levi subgroup $L_A$ and $\Phi$ is contained in $H^0(\Sigma,E(\mM^\CC)^{0}_{\sigma_L,\chi} \otimes \Omega^1_\Sigma)$.

\begin{remark}
A principal $H^\CC$-bundle $E$ is semistable or polystable if the $H$-Higgs bundle $(E,0)$ is respectively semistable or polystable.
\end{remark}

Given a semisimple subgroup $L \subset G$ preserved by the Cartan decomposition, its Lie algebra $\lL$ decomposes into $\lL_{\hH} \oplus \lL_{\mM}$, where $\lL_{\hH} = \lL \cap \hH$ and $\lL_{\mM} = \lL \cap \mM$. The subgroup $L_H = L \cap H$ is the maximal compact subgroup of $L$. We say that {\it $(E,\Phi)$ reduces to $L$} if there exists a reduction of structure group $\sigma$ of $E$ to $L_H^\CC$, giving the $L_H^\CC$-bundle $E_\rho$ and $\Phi(E_\sigma(\lL_{\mM}^\CC)) \subset E_\sigma(\lL_{\mM}^\CC) \otimes \Omega^1_\Sigma$.




Once we have defined the notion of semistability and polystability, it is possible to construct the moduli functor for the classification problem of $G$-Higgs bundles,
\begin{equation} \label{eq usual moduli functor}
\morph{\Mmm_\Sigma(G)}{\Aff}{\Sets}{S}{\left\{
	   \begin{array}{l}
		 \textnormal{Isomorphy classes of $S$-families} \\
	     \textnormal{of semistable $G$-Higgs bundles, } \\
	     \textnormal{with trivial characteristic class.}
	       \end{array}
	     \right\} .}{}{}
\end{equation}

We denote by $\Mm_\Sigma(G)$ the {\em moduli space of $G$-Higgs bundles}. Its existence follows, in full generality, from the work of Schmitt \cite{schmitt}.

\begin{theorem}[\cite{schmitt} Theorem 2.8.1.2] \label{tm MmG exists}
There exists a scheme $\Mm_\Sigma(G)$ corepresenting the moduli functor $\Mmm_\Sigma(G)$. The points of $\Mm_\Sigma(G)$ correspond to isomorphism classes of polystable $G$-Higgs bundles.
\end{theorem}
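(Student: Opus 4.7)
The plan is to reduce the construction to a GIT problem for decorated vector bundles, following the general framework developed by Schmitt. First, I would fix a faithful representation $\rho \colon H^\CC \hookrightarrow \GL(V)$. Under this embedding, a $G$-Higgs bundle $(E,\Phi)$ is equivalent to the data of a vector bundle $\Vv = E(V)$ of rank $\dim V$ together with (i) a reduction of structure group $\sigma$ from $\GL(V)$ to $H^\CC$, i.e.\ a section of the associated bundle $E \times_{H^\CC} (\GL(V)/H^\CC)$, and (ii) the Higgs field $\Phi \in H^0(\Sigma, E(\mM^\CC) \otimes \Omega^1_\Sigma)$, which under $\rho$ becomes a section of a vector bundle canonically built from $\Vv$. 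This repackages $(E,\Phi)$ as a vector bundle equipped with extra tensorial data, i.e.\ as a \emph{decorated vector bundle} in Schmitt's sense.

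Second, I would establish boundedness. The semistability condition on $(E,\Phi)$ (via all parabolic reductions and antidominant characters) forces a uniform bound on the Harder--Narasimhan slopes of $\Vv$: instability of $\Vv$ would produce a canonical filtration whose associated parabolic reduction would violate the Higgs-bundle semistability inequality for an appropriate antidominant character. Once boundedness is established, the underlying vector bundles can be uniformised: after twisting by a sufficiently ample line bundle they become globally generated with fixed Hilbert polynomial, and the whole collection of such $(\Vv, \sigma, \Phi)$ is parametrised by a locally closed subscheme $R$ of a product of a Quot scheme, a relative Hom scheme (for the Higgs field) and a relative section scheme (for the reduction $\sigma$). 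The natural action of $\Gamma = \SL(N)$, changing the chosen framing, has orbits in bijection with isomorphism classes of $G$-Higgs bundles.

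Third, I would construct $\Mm_\Sigma(G)$ as the GIT quotient $R \git \Gamma$ for a suitable $\Gamma$-linearised ample line bundle built from the determinant line bundle on the Quot factor together with a Gieseker-type weight on the decoration factors. The crucial verification, and the main technical obstacle, is the \emph{equivalence of semistabilities}: one must show that a point $(\Vv, \sigma, \Phi) \in R$ is GIT-(semi)stable for the chosen linearisation if and only if the corresponding $G$-Higgs bundle is (semi)stable in the intrinsic sense of the paper, i.e.\ satisfies $\deg_{\sigma,\chi}(E) \ge 0$ for every parabolic $P_A \subset H^\CC$, every antidominant $\chi$, and every reduction $\sigma$ with $\Phi \in H^0(\Sigma, E(\mM^\CC)^-_{\sigma,\chi}\otimes \Omega^1_\Sigma)$. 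The Hilbert--Mumford criterion identifies $1$-parameter subgroups of $\Gamma$ with weighted filtrations of $\Vv$, and one converts such a filtration into a reduction to a parabolic of $H^\CC$ (after taking limits in the decoration factor) together with an antidominant character; matching the GIT numerical invariant with $\deg_{\sigma,\chi}(E)$, as in \cite[Ch.~2]{schmitt}, yields the desired equivalence. The condition that $\Phi$ lies in the negative-weight subspace $(\mM^\CC)^-_\chi$ is exactly what ensures that the Higgs field survives in the limit, which is why only these reductions $\sigma$ enter the semistability test.

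Finally, once GIT produces a good quotient $\Mm_\Sigma(G) = R \git \Gamma$, standard properties give corepresentability of $\Mmm_\Sigma(G)$: any family over an affine $S$ yields, after local trivialisations, a $\Gamma$-equivariant morphism $S \to R$, and composition with the quotient map $R \to R \git \Gamma$ produces the universal classifying morphism, independent of the choice of trivialisation. Closed orbits in the semistable locus of $R$ correspond exactly to polystable $G$-Higgs bundles (by unwinding the limit construction in the Hilbert--Mumford analysis into a reduction to a Levi $L_A$ with $\Phi \in H^0(\Sigma, E(\mM^\CC)^0_{\sigma_L,\chi}\otimes \Omega^1_\Sigma)$, which is precisely the polystability condition of the paper), so the $\CC$-points of $\Mm_\Sigma(G)$ are in bijection with isomorphism classes of polystable $G$-Higgs bundles, as required.
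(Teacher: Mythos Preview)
The paper does not give its own proof of this statement: it is simply quoted from \cite{schmitt} (Theorem 2.8.1.2) and no argument is supplied beyond the citation. Your proposal is a faithful high-level outline of Schmitt's actual construction via decorated principal bundles and GIT, so you are reproducing precisely the argument the paper is deferring to; in that sense your approach and the paper's ``proof'' coincide.
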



\begin{remark}
We have not given any formal definition of $S$-equivalence for $G$-Higgs bundles. This is done in \cite{schmitt} and also in \cite[Section 2.10]{oscar&ignasi&gothen}, where Jordan--H\"older filtrations are defined. For our purposes, it is sufficient to say that two semistable $G$-Higgs bundles are $S$-equivalent if they determine the same point of the moduli space $\Mm_\Sigma(G)$.
\end{remark}

Following Simpson \cite{simpson1, simpson2}, one can give a rigidification of the moduli functor that provides a fine moduli space. For a fixed geometric point $x_0 \in \Sigma$, we define a {\it framing} of the $H^\CC$-bundle $E$ to be an isomorphism $\xi: E|_{\{ x_0 \}} \stackrel{\cong}{\lra} H^\CC$. Given an $S$-family $(E_S, \Phi_S)$ of $G$-Higgs bundles, we say that a framing for the family is an isomorphism $\xi_S: E|_{\{ x_0 \} \times S} \stackrel{\cong}{\lra}  H^\CC \otimes \Oo_S$. Two ($S$-families of) $G$-Higgs bundles with framing, $(E_S, \Phi_S, \xi_S)$ and $(E'_S, \Phi'_S, \xi'_S)$, are isomorphic if there exists an isomorphism of ($S$-families of) $G$-Higgs bundles $f : (E_S, \Phi_S) \stackrel{\cong}{\longrightarrow} (E'_S, \Phi'_S)$ such that $\xi_S = \xi'_S \circ f|_{\{ x_0 \} \times S}$. Let us define as follows the moduli functor for the classification of $G$-Higgs bundles with framing,
\[
\morph{\Fff_\Sigma(G,x_0)}{\Aff}{\Sets}{S}{\left\{
	   \begin{array}{l}
		  \textnormal{Isomorphy classes of $S$-families} \\
	     \textnormal{of semistable $G$-Higgs bundles} \\
	     \textnormal{with framing at $x_0$ and trivial} \\
	     \textnormal{characteristic class.} 
	       \end{array}
	     \right\}.}{}{}
\]

\begin{proposition}[\cite{simpson2} Theorem 9.6 and Proposition 9.7 for the case $G = H^\CC$] \label{pr RG}
There exists a scheme $\Ff_\Sigma(G,x_0)$ representing the functor $\Fff_\Sigma(G,x_0)$. 
Furthermore, there exists an $H^\CC$-action on $\Ff_\Sigma(G,x_0)$ and
\begin{equation} \label{eq MmG = RG quotiented by H^C}
\Mm_\Sigma(G) \cong \git{\Ff_\Sigma(G,x_0)}{H^\CC.}
\end{equation}
The closed orbits of \eqref{eq MmG = RG quotiented by H^C} are those given by $G$-Higgs bundles with framing whose underlying $G$-Higgs bundles are  polystable.
\end{proposition}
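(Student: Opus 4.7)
The plan is to adapt Simpson's construction for $G=H^\CC$ in \cite{simpson2} to the present setting where the Higgs field lies in $E(\mM^\CC) \otimes \Omega^1_\Sigma$ rather than in $E(\hH^\CC) \otimes \Omega^1_\Sigma$. Since $\mM^\CC$ is a finite-dimensional $H^\CC$-representation via the isotropy representation $\iota$, Simpson's scheme-theoretic framework carries over essentially verbatim. First I would establish boundedness of the family of semistable $G$-Higgs bundles of trivial topological type: by the stability results of Section \ref{sc stability}, the underlying principal $H^\CC$-bundle $E$ is itself semistable, hence lies in a bounded family by standard boundedness results for semistable principal bundles; a semicontinuity argument then bounds the Higgs field $\Phi \in H^0(\Sigma, E(\mM^\CC) \otimes \Omega^1_\Sigma)$ uniformly across that family.

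Next I would construct $\Ff_\Sigma(G,x_0)$ concretely. Fix a faithful representation $H^\CC \hookrightarrow \GL(V)$ and a sufficiently ample twist so that every semistable $H^\CC$-bundle $E$ in our family produces a globally generated associated vector bundle. Framed $H^\CC$-bundles with framing at $x_0$ then correspond to the points of a locally closed subscheme $R$ of a relative $\mathrm{Quot}$-scheme over $\Sigma$, exactly as in \cite{simpson2}; because the framing rigidifies the moduli problem and kills all automorphisms, $R$ already represents the functor of framed $H^\CC$-bundles. Adding the Higgs field amounts to taking the total space $\widetilde{R} \to R$ of the geometric vector bundle whose fibre over a point $[E,\xi]$ is $H^0(\Sigma, E(\mM^\CC) \otimes \Omega^1_\Sigma)$, and then restricting to the open semistable locus. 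The resulting scheme represents $\Fff_\Sigma(G,x_0)$.

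The group $H^\CC$ acts on $\Ff_\Sigma(G,x_0)$ through its natural action on framings, $h \cdot (E,\Phi,\xi) = (E,\Phi, h \circ \xi)$. Two framed $G$-Higgs bundles lie in the same orbit precisely when their underlying unframed $G$-Higgs bundles are isomorphic, so the orbit space parameterizes unframed semistable $G$-Higgs bundles. The forgetful morphism $\Ff_\Sigma(G,x_0) \to \Mm_\Sigma(G)$ is $H^\CC$-invariant and hence factors through any categorical quotient. Using the universal property of $\Mm_\Sigma(G)$ as the scheme corepresenting the moduli functor $\Mmm_\Sigma(G)$ (Theorem \ref{tm MmG exists}), together with the fact that every $G$-Higgs bundle admits a framing at $x_0$ \'etale-locally, one obtains the identification $\Mm_\Sigma(G) \cong \Ff_\Sigma(G,x_0) \mathbin{\!/\!\!/\!} H^\CC$ stated in \eqref{eq MmG = RG quotiented by H^C}.

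For the last assertion I would appeal to the general GIT principle that the closed orbits inside the semistable locus are in bijection with the points of the good quotient, and hence with $S$-equivalence classes of semistable $G$-Higgs bundles. The Jordan--H\"older filtration of a semistable $G$-Higgs bundle, as developed in \cite[Section 2.10]{oscar&ignasi&gothen}, exhibits the associated graded as the unique polystable representative of each $S$-equivalence class, and this polystable representative is exactly the one whose $H^\CC$-orbit in $\Ff_\Sigma(G,x_0)$ is closed. The main obstacle will be matching Schmitt's intrinsic notion of (semi- and poly-)stability with the GIT (semi- and poly-)stability arising from a chosen linearization on $\Ff_\Sigma(G,x_0)$: one must verify that the Hilbert--Mumford criterion for the $H^\CC$-action reproduces the degree criterion in terms of antidominant characters used earlier in this section to define stability, so that Schmitt's $\Mm_\Sigma(G)$ coincides with the GIT quotient and the two notions of ``polystable'' agree.
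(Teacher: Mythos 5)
The paper's own proof is a one-line reduction: it invokes \cite[Theorem 3.13]{franco&garcia-prada&newstead_2} to extend Simpson's \cite[Theorem 9.6 and Proposition 9.7]{simpson2} from $G=H^\CC$ to a real form $G$, the point being that $\mM^\CC$ is just another linear $H^\CC$-representation in which the Higgs field takes values. Your proposal instead reconstructs Simpson's argument from scratch, and most of it (the Quot-scheme construction, the vector bundle of Higgs fields over the framed locus, the $H^\CC$-action on framings, and the identification of closed orbits with polystable objects via Jordan--H\"older filtrations and the matching of Schmitt's stability with GIT stability) is a reasonable outline of what actually lies behind the citation.

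There is, however, a genuine gap in your boundedness step. Proposition \ref{pr RG} is stated for an arbitrary compact Riemann surface $\Sigma$, but you justify boundedness by appealing to ``the stability results of Section \ref{sc stability}'', i.e.\ to the implication that a semistable $G$-Higgs bundle has semistable underlying $H^\CC$-bundle. Those results are proved only for elliptic curves (their proofs use $\Omega^1_X\cong\Oo_X$ to identify the Higgs field with a section of $E(\mM^\CC)$ and are stated in the section devoted to elliptic curves, which logically comes \emph{after} this proposition), and the implication is simply false on a curve of genus $g\geq 2$: the uniformizing $\SL(2,\RR)$-Higgs bundle has underlying bundle $K^{1/2}\oplus K^{-1/2}$, which is unstable, yet the Higgs bundle is stable. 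So your route to boundedness collapses exactly in the generality the proposition requires. The standard fix --- and what Simpson's machinery actually uses --- is to pass to the Higgs vector bundle associated to a faithful representation of $H^\CC\hookrightarrow G^\CC \hookrightarrow \GL(V)$, observe that semistability is preserved, and invoke boundedness for semistable Higgs sheaves with vanishing Chern classes; the bound on $h^0(\Sigma, E(\mM^\CC)\otimes\Omega^1_\Sigma)$ then comes from this bounded family rather than from semistability of $E$ itself. A second, more minor gloss: a framing at $x_0$ does not literally kill all automorphisms of an arbitrary bundle (unipotent automorphisms can be the identity at a point without being trivial), so the representability of the framed functor requires Simpson's actual argument rather than this shortcut.
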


\begin{proof}
With minor changes, we can use \cite[Theorem 3.13]{franco&garcia-prada&newstead_2} to extend \cite[Theorem 9.6 and Proposition 9.7]{simpson2} to the case of a real semisimple group $G$.
\end{proof}

In agreement with Simpson \cite{simpson1, simpson2}, we refer to the scheme $\Ff_\Sigma(G, x_0)$ as the {\it representation space} of $G$-Higgs bundles.

\subsection{Principal bundles over elliptic curves}
\label{sc G-bundles}

From now on, $(X, x_0)$ (or just $X$) will denote an elliptic curve. We write $\hat{X}$ for the variety $\Pic^0(X)$. The Abel--Jacobi map $x\mapsto\Oo(x)\otimes\Oo(x_0)^{-1}$ gives an isomorphism $X \cong \hat{X}$, which induces an abelian group structure on $X$. Having in mind the isomorphism $X \cong \hat{X}$, we will maintain the use of $\hat{X}$ through this section in order to clarify the exposition.



Let $\rho : \pi_1(X) \to H$ be a representation of the fundamental group into a compact Lie group $H$, we shall refer to such a representation as a {\it unitary representation}. Let $\ol{\rho} : \pi_1(X) \to H/Z_H(H)$ be the induced representation. We say that $\rho$ is {\it topologically trivial} if $\ol{\rho}$ can be lifted to a representation into the universal cover of $H/Z_H(H)$.
Given a topologically trivial representation of the fundamental group $\rho$, one can define a holomorphic $H^\CC$-bundle that we denote by $E_\rho$. If $\wt{X} \to X$ is the $\pi_1(X)$-bundle defined by the universal cover of $X$, we define $E_\rho := \rho_*(\wt{X})$, where $\rho_*$ denotes the extension of structure group associated to the representation. 
This construction is shown in \cite[Section 6]{atiyah&bott} and \cite{ramanathan_stable} where is also stated that
\begin{itemize}
\item the bundle $E_{\rho}$ is polystable,
\item $E_\rho$ has trivial characteristic class,
\item two bundles $E_{\rho_1}$ and $E_{\rho_2}$ are isomorphic if and only if $\rho_1$ and $\rho_2$ are conjugate, and
\item every polystable $H^\CC$-bundle is isomorphic to $E_{\rho}$ for some topologically trivial unitary representation $\rho : \pi_1(X) \to H$.
\end{itemize} 


For every element $y \in \zZ_{\hH}(\rho)$, one can define
\[
\morph{\eta_y}{\CC}{Z_{H^\CC}(\rho)}{t}{\exp_H( y\cdot t).}{}
\]
Using $\eta_y$, one can define 
\[
(\eta_y)_* : H^1(X, \Oo_X) \longrightarrow H^1(X, Z_{H^\CC}(\rho)),
\]
and we construct the $Z_{H^\CC}(\rho)$-bundle
\[
L_y := (\eta_y)_*\xi,
\]
where $\xi$ is a fixed non-zero element of the 1-dimensional space $H^1(X,\Oo_X)$. Due to the commutativity, the following map is a morphism of groups,
\[
\morph{\mu_\rho}{\im(\rho) \times Z_{H^\CC}(\rho)}{H^\CC}{(a,b)}{ab.}{}
\]
Note that $E_\rho$ is naturally a $\im(\rho)$-bundle while $L_y$ is a $Z_{H^\CC}(\rho)$-bundle and therefore we can consider the associated extension of structure groups giving a $H^\CC$-bundle,
\begin{equation} \label{}
\label{eq definition of E_rho y}
E_{\rho,y} := (\mu_\rho)_*(E_\rho \times_X L_y).
\end{equation}
Fix a Stein cover $\{ U_{i} \}$ on $X$ such that each $U_i$ is simply connected and each $U_i \cap U_j$ is either connected or empty for all $i \neq j$. Let $\{ f_{ij} \}$ be a non-zero $1$-cocycle for the cover $\{ U_i \}$ associated to $\xi \in  H^1(X,\Oo_X)$. Let $\{ h_{ij} \}$ be the normalized transition functions of $E_\rho$ with respect to the open cover $\{ U_i \}$. Then, the transition functions of $E_{\rho,y}$ for the cover $\{ U_i \}$ are
\begin{equation} \label{eq normalized transition functions}
\{ h_{ij} \exp_H(f_{ij} y) \}.
\end{equation}
Since $y$ and $\rho$ commute, one can check that \eqref{eq normalized transition functions} satisfies the cocycle condition.

\begin{remark}
\label{rm E_rho and E_rho y are S-equivalent}
Note that when $y \in \zZ_{\hH}(\rho)$ is nilpotent, $E_{\rho,y}$ is semistable and S-equivalent to $E_\rho$.
\end{remark}

\begin{remark}
\label{rm set of all possible Higgs fields}
Let $V$ be a complex vector space on which $H^\CC$ acts. Then, for any $E_{\rho,y}$,
\[
H^0(X,E_{\rho,y}(V)) = Z_V(\im \rho) \cap Z_V(y).
\]
\end{remark}

Over an elliptic curve, every semistable $H^\CC$-bundle can be expressed in these terms.

\begin{proposition}[\cite{friedman&morgan} Theorem 3.6 and Theorem 4.1]
\label{pr normalized transition functons}
\ 
\begin{enumerate}
\item Let $E$ be a semistable principal $H^\CC$-bundle over the elliptic curve $X$. Then there exist a central unitary representation $\rho$ and a nilpotent element $y \in \zZ_{\hH^\CC}(\rho)$ such that $E \cong E_{\rho, y}$. 
\item The group of automorphisms of $E_{\rho,y}$ is identified with 
\[
\Aut_{H^\CC}(E_{\rho,y}) = Z_{H^\CC}(\rho,y) = Z_{H^\CC}(\rho) \cap Z_{H^\CC}(y).
\]
\item $E_{\rho,y}$ and $E_{\rho',y'}$ are isomorphic if and only if $\rho$ and $\rho'$ are conjugate by an element $h \in H$ sending $y$ to $y'$.
\end{enumerate}
\end{proposition}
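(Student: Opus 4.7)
The plan is to combine the Narasimhan--Seshadri--Ramanathan correspondence (in the polystable case) with a deformation-theoretic / cohomological argument classifying semistable bundles whose associated graded is a fixed $E_\rho$, together with a direct computation via the explicit transition functions \eqref{eq normalized transition functions} for the statements about automorphisms and isomorphisms. The essential geometric inputs are that $\pi_1(X) \cong \ZZ^2$ is abelian (so every unitary representation $\rho$ has abelian image) and that $\dim_\CC H^1(X, \Oo_X) = 1$, which together make the relevant extension theory very transparent.

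For the existence assertion of part (1), I would take a Jordan--H\"older filtration of the semistable bundle $E$. The graded object $\gr(E)$ is polystable with trivial characteristic class, so by Ramanathan's theorem (recalled in the bulleted list preceding the proposition) $\gr(E) \cong E_\rho$ for some topologically trivial unitary representation $\rho$. Because $\im \rho$ is abelian, the adjoint representation decomposes into $\im\rho$-weight spaces $\hH^\CC = \bigoplus_\chi \hH^\CC_\chi$ with $\hH^\CC_0 = \zZ_{\hH^\CC}(\rho)$, and correspondingly $\ad(E_\rho) = \bigoplus_\chi L_\chi$, where each $L_\chi$ is a degree-zero line bundle and $L_0$ is the trivial bundle of rank $\dim_\CC \zZ_{\hH^\CC}(\rho)$. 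Since a non-trivial degree-zero line bundle on an elliptic curve has vanishing $H^1$, one finds
\[
H^1(X, \ad(E_\rho)) \,\cong\, H^1(X, \Oo_X) \otimes_\CC \zZ_{\hH^\CC}(\rho) \,\cong\, \zZ_{\hH^\CC}(\rho).
\]
An element $y$ of this space determines the cocycle $\{f_{ij} y\}$ and, after multiplying the transition functions of $E_\rho$ by $\exp(f_{ij} y)$, produces precisely $E_{\rho, y}$; because $\zZ_{\hH^\CC}(\rho)$ is abelian the first-order deformation integrates globally, and every semistable bundle with graded object $E_\rho$ arises this way. It remains to force $y$ to be nilpotent: writing the Jordan decomposition $y = y_s + y_n$ inside $\zZ_{\hH^\CC}(\rho)$, a non-zero $y_s$ either can be absorbed into a new representation $\rho'$ (replacing $\rho$ so that only the nilpotent part survives), or, if not, a character dual to $y_s$ together with Lemma \ref{lm pPA in pP- y lLa in lL-} yields a parabolic reduction of strictly negative degree, contradicting semistability; conversely $y_n$ nilpotent preserves semistability, as noted in Remark \ref{rm E_rho and E_rho y are S-equivalent}.

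Parts (2) and (3) follow by a direct analysis of \eqref{eq normalized transition functions}. An automorphism of $E_{\rho, y}$ is a collection $\{\phi_i : U_i \to H^\CC\}$ of holomorphic maps satisfying $\phi_i \cdot h_{ij}\exp(f_{ij} y) = h_{ij}\exp(f_{ij} y) \cdot \phi_j$ on overlaps; compactness of $X$ together with the affineness of $H^\CC$ forces any such holomorphic family to be locally constant, hence constant, and the resulting $\phi \in H^\CC$ must commute with every $h_{ij}$, hence with $\im\rho$, and (because $\{f_{ij}\}$ represents a non-zero class in $H^1(X, \Oo_X)$) also with $y$. This identifies $\Aut_{H^\CC}(E_{\rho, y})$ with $Z_{H^\CC}(\rho) \cap Z_{H^\CC}(y)$. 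Part (3) is analogous: an isomorphism $E_{\rho, y} \stackrel{\cong}{\lra} E_{\rho', y'}$ reduces to a constant $\phi \in H^\CC$ with $\phi \rho \phi^{-1} = \rho'$ and $\Ad(\phi) y = y'$, and a polar decomposition argument lets one adjust $\phi$ to lie in the maximal compact subgroup $H$. The main obstacle, and the technical heart of the proof, is the deformation step in part (1): one must go beyond the first-order identification $H^1(X, \ad(E_\rho)) \cong \zZ_{\hH^\CC}(\rho)$ to a genuine global parametrization of semistable deformations, and one must carve out the nilpotent cone inside $\zZ_{\hH^\CC}(\rho)$ by semistability, which requires tracking the Jordan decomposition and invoking the parabolic-reduction machinery of Lemma \ref{lm pPA in pP- y lLa in lL-}.
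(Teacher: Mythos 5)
You should first be aware that the paper gives no proof of this proposition: it is imported verbatim from \cite{friedman&morgan} (Theorems 3.6 and 4.1), so there is no internal argument to match and your attempt has to stand on its own. Its skeleton is reasonable --- Jordan--H\"older reduction to a polystable $E_\rho$, the weight-space computation $H^1(X,\ad E_\rho)\cong \zZ_{\hH^\CC}(\rho)$ from the vanishing of $H^1$ of non-trivial degree-zero line bundles, and the explicit cocycle $\{h_{ij}\exp(f_{ij}y)\}$ --- and the first-order computation is correct. But the proof has genuine gaps. The central assertion of part (1), that \emph{every} semistable bundle with associated graded $E_\rho$ can be put in the normal form $E_{\rho,y}$ for a single $y$, is exactly the content of Friedman--Morgan's Theorem 3.6 and is not delivered by the $H^1$ computation: a semistable bundle with graded object $E_\rho$ is an iterated extension, and simultaneously normalizing all of its gluing data to $\exp(f_{ij}y)$ is the real work, which you acknowledge but do not do. Moreover your stated reason for integrability, ``because $\zZ_{\hH^\CC}(\rho)$ is abelian,'' is false: that centralizer is reductive but in general non-abelian (for $\rho$ trivial it is all of $\hH^\CC$); the cocycle condition for $\exp(f_{ij}y)$ holds simply because the elements $f_{ij}y$ are all proportional to the single $y$ and $y$ commutes with $\im\rho$. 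The nilpotency dichotomy is also not established, and its second branch is vacuous: a semisimple $y_s\in\zZ_{\hH^\CC}(\rho)$ never violates semistability, since it exponentiates to a topologically trivial torus bundle which (by surjectivity of $H^1(X,\Oo_X)\to\Pic^0(X)$) is always absorbed into a new unitary representation; invoking Lemma \ref{lm pPA in pP- y lLa in lL-} here is a red herring.

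The arguments for parts (2) and (3) also do not go through as written. The local maps $\phi_i:U_i\to H^\CC$ do not glue to a holomorphic map $X\to H^\CC$ (to which compactness of $X$ and affineness of $H^\CC$ would apply); they glue to a section of the conjugation bundle $E(H^\CC)$, whose transition functions twist the fibres, and there is no local reason for such a section to be locally constant (the $U_i$ are non-compact Stein sets). The constancy is a global statement --- essentially Remark \ref{rm set of all possible Higgs fields} in Lie-algebra form, or Friedman--Morgan's Theorem 4.1 in group form --- and needs an argument, e.g.\ passing to a faithful representation and using that the associated degree-zero semistable vector bundle has only the evident constant endomorphisms. Part (3) inherits the same unproved constancy, and the polar-decomposition step must additionally check that the unitary factor of $\phi$ still carries $y$ to $y'$ after $\rho$ has been conjugated into $\rho'$, which you do not verify. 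In short: the inputs you identify ($\pi_1(X)$ abelian, $\dim H^1(X,\Oo_X)=1$, vanishing of $H^1$ for non-trivial degree-zero line bundles) are the right ones, but at each of the three points where the quoted theorems of \cite{friedman&morgan} do real work, your argument either asserts the conclusion or substitutes an invalid local reason for a necessary global one.
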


The fundamental group of an elliptic curve is abelian, $\pi_1(X) \cong \ZZ
\times \ZZ$. Then, the representations associated to a polystable
$H^\CC$-bundle of trivial characteristic class are completely determined by
commuting pairs $(a,b) = (\rho(\alpha), \rho(\beta))$ where $[a,b] = \id$ and
such that the projections $\ol{a},\ol{b}$ of $a$ and $b$ to $H/Z_H(H)$ can be
lifted to a commuting pair in the universal cover of this last group. By a
result of Borel \cite{borel2}, such $a$ and $b$ are contained in the same
maximal torus $T \subset H$ (up to conjugation by $H$). From the
Narasimhan--Seshadri--Ramanathan Theorem \cite{narasimhan&seshadri,ramanathan_stable}, one obtains the following result.

\begin{proposition}[\cite{friedman&morgan&witten}, \cite{laszlo}]
\label{co if deg E then E reduces to T}
Let $H$ be a compact group and let $T$ be a maximal torus. Every topologically trivial polystable $H^\CC$-bundle over the elliptic curve $X$ admits a reduction of structure group to $T$. 
\end{proposition}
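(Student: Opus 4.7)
The plan is to combine the Narasimhan--Seshadri--Ramanathan correspondence (recalled just before the statement) with Borel's theorem on commuting pairs in compact Lie groups, which was also explicitly invoked in the preceding paragraph.

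First I would use the Narasimhan--Seshadri--Ramanathan correspondence to replace the polystable bundle $E$ by a topologically trivial unitary representation $\rho: \pi_1(X)\to H$ with $E\cong E_\rho$, as listed in the bulleted properties earlier. Since $\pi_1(X)\cong\ZZ\times\ZZ$ is generated by two commuting cycles $\alpha,\beta$, the representation $\rho$ is completely determined by the commuting pair $(a,b)=(\rho(\alpha),\rho(\beta))\in H\times H$, and the topological triviality of $E_\rho$ translates into the condition that the projections $\bar a,\bar b\in H/Z_H(H)$ lift to a commuting pair in the universal cover of $H/Z_H(H)$.

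The second step is to apply Borel's theorem \cite{borel2}, quoted in the paragraph preceding the statement: a commuting pair $(a,b)\in H\times H$ satisfying exactly this liftability condition is simultaneously conjugate (by some $h\in H$) into a common maximal torus $T\subset H$. Replacing $\rho$ by the conjugate representation $\rho'=h\rho h^{-1}$, which defines an isomorphic bundle $E_{\rho'}\cong E_\rho$, we obtain a representation that factors as $\rho': \pi_1(X)\to T\hookrightarrow H$.

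Finally, I would interpret the conclusion in bundle-theoretic terms. Writing $\wt X\to X$ for the universal cover of $X$ viewed as a $\pi_1(X)$-bundle, the bundle $E_{\rho'}$ is obtained by extension of structure group from the $T$-bundle $F:=\rho'_*(\wt X)$ along the inclusion $T\hookrightarrow H^\CC$. Composing further with $T\hookrightarrow T^\CC\hookrightarrow H^\CC$ exhibits the desired holomorphic reduction of structure group of $E\cong E_{\rho'}$ to $T$ (equivalently, to $T^\CC$). The only non-routine ingredient is Borel's theorem on liftable commuting pairs, but since it is quoted in the preceding discussion, the proof reduces to assembling these observations, and no step is a real obstacle.
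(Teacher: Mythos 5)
Your proposal is correct and follows essentially the same route as the paper: the paper's justification is precisely the paragraph preceding the statement, which invokes the Narasimhan--Seshadri--Ramanathan correspondence to pass to a topologically trivial unitary representation $\rho$ determined by a liftable commuting pair $(a,b)$, and then Borel's theorem to conjugate that pair into a common maximal torus $T$. Your bundle-theoretic reformulation at the end (pushing forward the $\pi_1(X)$-bundle $\wt X$ along $\rho'$ landing in $T$) is exactly how the paper constructs $E_\rho$, so nothing is missing.
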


Given a torus $T$, denote its {\it cocharacter lattice} by
\begin{equation}
\label{eq Lambda_S}
\Lambda_T := \Hom(\U(1), T) = \Hom(\CC^*, T^\CC),
\end{equation}
which is a lattice in $\tT$. Note that the fundamental group is $\pi_1(T) = \Lambda_T$. One has the natural isomorphism of groups
\begin{equation} \label{eq definition of Psi}
\map{\CC^* \otimes_\ZZ \Lambda_T}{T^\CC}{\sum_{i} u_i \otimes_\ZZ \lambda_i}{\Pi_i \lambda_{i}(u_i).}{\cong}
\end{equation}

Take the Poincar\'e bundle $\Pp_{\CC^*} \to X \times \hat{X}$. For a given torus $T$, using the isomorphism \eqref{eq definition of Psi} and fibre products of the Poincar\'e bundle, one can construct a family of $T^\CC$-bundles with trivial characteristic class,
\begin{equation} \label{eq definition of Pp_T}
\Pp_T \lra X \times (\hat{X} \otimes_\ZZ \Lambda_{T}).
\end{equation}
By \cite[Theorem 9.6]{simpson2} (among other references), $\Pp$ is a universal
family for the classification problem for $T^\CC$-bundles of characteristic
class $0$ with framing. 

Recall Proposition \ref{co if deg E then E reduces to T}. Let $i : T^\CC \hookrightarrow H^\CC$ be the natural injection and denote by $i_*$ the extension of structure group associated to it. As a consequence, the family 
\begin{equation} \label{eq definition of Ee_0}
\Ee_H : = i_*(\Pp_T) \to X \times (\hat{X} \otimes_\ZZ \Lambda_T)
\end{equation}
induces a surjective morphism from its parametrizing space to the moduli space $M_X(H^\CC)$ of topologically trivial $H^\CC$-bundles: 
\[
\hat{X} \otimes_\ZZ \Lambda_T \to M_X(H^\CC).
\]
There is a standard action of the Weyl group $W = W(H^\CC,T^\CC) = N_H(\tT)/Z_H(\tT)$ on $\Lambda_T$ which extends naturally to an action on $\hat{X} \otimes_\ZZ \Lambda_T$. The previous surjection factors through this action giving a bijection. Since the moduli space $M_X(H^\CC)$ is a normal variety, this is enough to prove the following.

\begin{theorem}[\cite{friedman&morgan&witten} Theorem 2.6, \cite{laszlo} Theorem 4.16] \label{tm MmH}
Let $H^\CC$ be a connected complex reductive Lie group and let $T \subset H$ be a maximal torus. Then
\begin{equation} \label{eq MmH}
M_X(H^\CC) \cong \quotient{(\hat{X} \otimes_\ZZ \Lambda_T)}{W}.
\end{equation}
\end{theorem}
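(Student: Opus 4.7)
The plan is to construct the claimed isomorphism explicitly using the family $\Ee_H \to X \times (\hat{X} \otimes_\ZZ \Lambda_T)$ of \eqref{eq definition of Ee_0}. Since $\Ee_H$ is a family of topologically trivial semistable (in fact polystable) $H^\CC$-bundles parametrized by $\hat{X} \otimes_\ZZ \Lambda_T$, the corepresenting property of the moduli space $M_X(H^\CC)$ produces a classifying morphism
\[
c : \hat{X} \otimes_\ZZ \Lambda_T \lra M_X(H^\CC).
\]
First I would check that $c$ is $W$-invariant: any $w \in W$ lifts to an element $\dot w \in N_H(T)$, and conjugation by $\dot w$ carries the $T^\CC$-bundle $\Pp_{T,p}$ to $\Pp_{T,\,w\cdot p}$; but after extending the structure group to $H^\CC$ this conjugation becomes inner and hence gives an isomorphism of the associated $H^\CC$-bundles. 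So $c$ descends to
\[
\ol{c} : \quotient{(\hat{X} \otimes_\ZZ \Lambda_T)}{W} \lra M_X(H^\CC).
\]

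Next I would establish that $\ol{c}$ is bijective on $\CC$-points. Surjectivity follows from Proposition \ref{co if deg E then E reduces to T}: every polystable topologically trivial $H^\CC$-bundle $E$ admits a reduction $E_T$ to $T$, and via the isomorphism \eqref{eq definition of Psi} together with $\hat X \cong \Pic^0(X)$, the resulting $T^\CC$-bundle $E_T\otimes_T T^\CC$ arises as $\Pp_T$ restricted to some point of $\hat{X} \otimes_\ZZ \Lambda_T$. For injectivity, suppose $i_*(\Pp_{T,p_1}) \cong i_*(\Pp_{T,p_2})$ as $H^\CC$-bundles; then one has two reductions to $T^\CC$ of the same polystable bundle, which by Proposition \ref{pr normalized transition functons} corresponds to two commuting pairs in $H$ representing the same isomorphism class. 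Borel's theorem, together with the uniqueness up to $W$ of simultaneous conjugation into a fixed maximal torus, forces $p_1$ and $p_2$ to lie in the same $W$-orbit.

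Finally, I would upgrade the bijective morphism $\ol{c}$ to an isomorphism of schemes. The source $(\hat{X} \otimes_\ZZ \Lambda_T)/W$ is a finite quotient of a smooth projective variety and hence normal, and $M_X(H^\CC)$ is a normal variety; a bijective morphism between normal algebraic varieties is an isomorphism (this is the version of Zariski's Main Theorem invoked by the authors via the parenthetical ``since the moduli space is normal''). The main obstacle I anticipate is the injectivity step: one has to exclude the possibility that ``extra'' automorphisms of the extended $H^\CC$-bundle, not coming from $N_{H^\CC}(T^\CC)$, could identify non-$W$-equivalent points of $\hat{X} \otimes_\ZZ \Lambda_T$. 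This is precisely where the elliptic-curve structure enters essentially, through Borel's theorem on commuting pairs and the explicit description of $\Aut_{H^\CC}(E_{\rho,y}) = Z_{H^\CC}(\rho) \cap Z_{H^\CC}(y)$ recorded in Proposition \ref{pr normalized transition functons}.
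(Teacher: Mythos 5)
Your proposal is correct and follows essentially the same route as the paper, which obtains the surjection from the family $\Ee_H = i_*(\Pp_T)$, observes that it factors through the $W$-action to give a bijection, and invokes normality of $M_X(H^\CC)$ to conclude. You simply supply more detail than the paper's sketch at the injectivity step (reducing conjugacy in $H$ of two $T$-valued representations to conjugacy by $N_H(T)$ via Borel's theorem), which is exactly the point the paper leaves implicit by citing \cite{friedman&morgan&witten} and \cite{laszlo}.
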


\section{$G$-Higgs bundles over elliptic curves}
\label{sc G-Higgs bundles on EC}

Over an elliptic curve $X$, one has $\Omega^1_X \cong \Oo_X$. Therefore, a $G$-Higgs bundle over $X$ is a pair $(E,\Phi)$, where $E$ is a principal holomorphic $H^\CC$-bundle and $\Phi \in H^0(X,E(\mM^\CC))$.

\subsection{Stability in terms of the underlying principal bundle}
\label{sc stability}

We have the non-canonical isomorphism $H^0(X,\Oo_X) \cong \CC$. To simplify the presentation of the results in this section, we pick (non-canonically) a non-zero element of this space, $\s \in H^0(X,\Oo_X)$.

\begin{proposition} \label{pr E Phi semistable implies E semistable}
Let $(E,\Phi)$ be a semistable $G$-Higgs bundle. Then $E$ is a semistable $H^\CC$-bundle. 
\end{proposition}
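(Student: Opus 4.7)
The plan is to argue by contradiction. Assume $(E,\Phi)$ is semistable but that $E$ is not semistable as a principal $H^\CC$-bundle. Invoking the Harder--Narasimhan theorem for principal bundles over a curve, one obtains a canonical parabolic subgroup $P_A\subset H^\CC$, a reduction of structure $\sigma$ of $E$ to $P_A$, and a strictly antidominant character $\chi$ of $\pP_A$ such that $\deg_{\sigma,\chi}(E)<0$. The whole strategy is to prove that this HN reduction is automatically compatible with the Higgs field, i.e.\ $\Phi\in H^0(X,E(\mM^\CC)^-_{\sigma,\chi})$. Granting this, the semistability hypothesis on $(E,\Phi)$ applied to the triple $(P_A,\chi,\sigma)$ forces $\deg_{\sigma,\chi}(E)\ge 0$, producing the desired contradiction.

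To establish the compatibility, I would decompose $\mM^\CC=\bigoplus_j(\mM^\CC)^{(j)}$ into eigenspaces of $\ad(s_\chi)$. Because $\chi$ is strictly antidominant, the unipotent radical $U_A$ strictly lowers the $\ad(s_\chi)$-weight on $\mM^\CC$, so the partial sums $\bigoplus_{j\le k}(\mM^\CC)^{(j)}$ form an ascending $P_A$-stable filtration whose $k=0$ piece is precisely $(\mM^\CC)^-_\chi$. Passing through the reduction $\sigma$ yields a filtration of $E(\mM^\CC)$ whose graded pieces are $E_{\sigma_L}(\mM^\CC)^{(j)}$, with $\sigma_L$ the Levi reduction induced by the associated graded of $\sigma$. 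The task thereby reduces to showing that, for each $j>0$, the bundle $E_{\sigma_L}(\mM^\CC)^{(j)}$ has no nonzero global sections; if so, every component of $\Phi$ in positive $\ad(s_\chi)$-weight must vanish.

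This is the point at which the elliptic-curve hypothesis becomes essential. By the HN property, $E_{\sigma_L}$ is semistable as an $L_A$-bundle. Isolating the central part of $L_A$ (which carries the nontrivial HN degree) and applying Proposition \ref{co if deg E then E reduces to T} to the semisimple factor, one reduces $E_{\sigma_L}$ to a maximal torus $T\subset L_A$. Each $T$-weight subspace $V_\mu\subset(\mM^\CC)^{(j)}$ then contributes a line bundle $E_T(V_\mu)$ with degree $\langle\mu,c_1(E_T)\rangle$. The HN data gives $\langle\chi,c_1(E_T)\rangle<0$, and the canonicity of the HN reduction pins down $c_1(E_T)$ to the cone dual to the strictly antidominant chamber of characters of $\pP_A$, forcing $\langle\mu,c_1(E_T)\rangle<0$ for every weight $\mu$ with $\mu(s_\chi)>0$. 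Since a line bundle of negative degree on an elliptic curve has no global sections, the positive-weight components of $\Phi$ vanish.

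The principal obstacle is precisely the final sign comparison: transferring the single negativity $\langle\chi,c_1(E_T)\rangle<0$ into the uniform family of negativities $\langle\mu,c_1(E_T)\rangle<0$ for all $\mu$ in the positive $s_\chi$-eigenspaces of $\mM^\CC$. Pushing this through requires the full strength of the canonicity and maximality of the HN reduction, together with the rigid classification of semistable principal bundles on an elliptic curve, rather than just the mere existence of one destabilizing parabolic.
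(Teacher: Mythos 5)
Your overall strategy is viable but genuinely different from the paper's, and it stalls at exactly the step you yourself flag as ``the principal obstacle.'' The paper never analyses the $\ad(s_\chi)$-weights of $\mM^\CC$ at all: it extends $E$ to a $G^\CC$-bundle, invokes functoriality of the Harder--Narasimhan reduction (\cite[Proposition 10.4]{atiyah&bott}) to conclude that the HN parabolic $\qQ_{\HN}$ of $E_{G^\CC}$ contains $\pP_{\HN}$ and is $\theta$-stable, and then uses the standard vanishing $H^0(X,E_{G^\CC}(\gG^\CC)) = H^0(X,(E_{G^\CC})_\gamma(\qQ_{\HN}))$; intersecting with $\mM^\CC$ gives $H^0(X,E(\mM^\CC)) = H^0(X,E_\sigma(\qQ_{\HN}\cap\mM^\CC)) \subset H^0(X,E((\mM^\CC)^-_{\sigma,\chi}))$ in one stroke, with $\chi=\tau|_{\hH^\CC}$ inherited from the HN character $\tau$ of $\qQ_{\HN}$. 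The detour through $G^\CC$ is precisely what controls the weights of the isotropy representation, which are restrictions of roots of $\gG^\CC$ and not roots of $\hH^\CC$ --- and that is why your intrinsic argument runs into trouble where it does.

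Concretely, the unresolved step is not closable by the mechanism you propose. Knowing that the HN type $c_1(E_T)$ lies in the cone dual to the antidominant chamber of characters of $\pP_A$ constrains its pairing with the roots of $\hH^\CC$, but says nothing about its pairing with an arbitrary weight $\mu$ of $\mM^\CC$ satisfying $\mu(s_\chi)>0$, because those weights need not lie in any cone determined by the chamber structure of $H^\CC$. The assertion becomes true --- in fact tautological --- only for the specific choice of $\chi$ whose Killing dual $s_\chi$ is a negative multiple of the HN type itself (the canonical Atiyah--Bott character of the canonical reduction): then $\mu(s_\chi)>0$ is literally equivalent to $\langle\mu,c_1(E_T)\rangle<0$, the graded pieces $E_{\sigma_L}((\mM^\CC)^{(j)})$ for $j>0$ are semistable of negative degree and hence have no sections, and your filtration argument closes. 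You never make that choice, so as written the proof has a gap. Two smaller points: Proposition \ref{co if deg E then E reduces to T} applies to \emph{polystable, topologically trivial} bundles, whereas $E_{\sigma_L}$ is merely semistable with nontrivial characteristic class, so it cannot be invoked as stated (use instead that associated bundles of semistable bundles under representations of the Levi are semistable of computable slope); and the HN theorem does not directly hand you a \emph{strictly} antidominant character with $\deg_{\sigma,\chi}(E)<0$ in the normalisation of this paper --- that too is part of choosing $\chi$ canonically rather than arbitrarily.
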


\begin{proof}
Fix a maximally compact $\theta$-stable Cartan subalgebra $\cC_0$ and a lexicographic order as in Remark \ref{rm Delta preserved by theta}.

Suppose that $E$ is an unstable $H^\CC$-bundle.
We know by the Harder-Nara\-simhan Theorem \cite[Section 10]{atiyah&bott} that $E$ has a reduction $\sigma$ to some parabolic subgroup $P_{\HN} \subset H^\CC$ giving the $P_{\HN}$-bundle $E_\sigma$. Since $P_{\HN}$ is defined up to conjugation, one can assume that it is a standard parabolic subgroup associated to the subset $A \subset \Delta(\hH^\CC,\tT^\CC)$.

We know that $E_{G^\CC}$ is an unstable $G^\CC$-bundle and so we can apply again the Harder--Narasimhan Theorem to obtain a reduction $\gamma$ to the parabolic subgroup $Q_{\HN} \subset G^\CC$ giving the $Q_{\HN}$-bundle $(E_{G^\CC})_\gamma$. We take $Q_{\HN}$ to be a standard parabolic. The theorem also ensures the existence of an antidominant character $\tau$ of $Q_{\HN}$ such that $\deg{\gamma,\tau}(E_{G^\CC}) < 0$, and implies that the holomorphic sections of the adjoint bundle are contained in the reductions to the Harder--Narasimhan parabolics
\[
H^0(X,E(\hH^\CC)) = H^0(X,E_\sigma(\pP_{\HN}))
\]
and
\[
H^0(X,E_{G^\CC}(\gG^\CC)) = H^0(X,(E_{G^\CC})_\gamma(\qQ_{\HN})).
\]
By \cite[Proposition 10.4]{atiyah&bott}, the Harder--Narasimhan reduction is functorial with respect to group homomorphisms, so $H^\CC \hookrightarrow G^\CC$ implies that $P_{\HN} \subset Q_{\HN}$ and therefore the Lie algebra $\qQ_{\HN}$ is preserved by the Cartan involution. As a consequence
\begin{equation}
\label{eq H0 E mMC = H0 EP qQ cap mMC}
H^0(X,E(\mM^\CC)) = H^0(X,E_\sigma(\qQ_{\HN} \cap \mM^\CC)).
\end{equation}

Recall that $\theta$ denotes the Cartan involution. Since $\qQ_{\HN}$ is preserved by $\theta$, by Lemma \ref{lm chi circ theta is antidominant} we know that there exists an antidominant character $\eta = \frac{1}{2}(\tau + \tau \circ \theta)$ of $\qQ_{\HN}$. Therefore, one has an antidominant character $\chi$ of $\pP_{\HN}$ such that $\chi = \tau|_{\hH^\CC}$ and then the representatives via the Killing form of $\chi$ and $\eta$ are equal, $s_\chi = s_\eta$. 

Since $\tau$ restricted to $\pP_{\HN}$ is equal to our character $\chi$ and $\gamma_*(E_{G^\CC}) = (\sigma_*E)_{Q_{\HN}}$, one has that 
\begin{equation}
\label{eq deg E sigma chi < 0}
\deg_{\sigma,\chi}(E) = \deg_{\gamma,\tau}(E_{G^\CC}) < 0.
\end{equation}

Recall from \eqref{eq definition of mM^-} the linear subspaces $(\mM^\CC)^{-}_{\chi}$ and $\qQ^{-}_{\eta}$. Since $s_\chi = s_\eta$ we know that $(\mM^\CC)^{-}_{\chi} = \qQ^{-}_\eta \cap \mM^\CC$. By Lemma \ref{lm pPA in pP- y lLa in lL-} we have $(\qQ_{\HN}) \subset \qQ_{\eta}^{-}$, so $\qQ_{\HN} \cap \mM^\CC \subset (\mM^\CC)^{-}_{\chi}$. The parabolic subgroup $P$ acts on both subalgebras so
\[
E_\sigma(\qQ_{\HN} \cap \mM^\CC) \subseteq E_\sigma((\mM^\CC)^{-}_{\chi}).
\]
Due to \eqref{eq H0 E mMC = H0 EP qQ cap mMC} and the statement above, we have
\begin{equation}
\label{eq H0 EPmM-chi = H0 E mMC}
H^0(X,E((\mM^\CC)^{-}_{\sigma, \chi})) = H^0(X,E(\mM^\CC)).
\end{equation}

The existence of an antidominant character $\chi$ of $\pP$ satisfying \eqref{eq deg E sigma chi < 0} and \eqref{eq H0 EPmM-chi = H0 E mMC} implies that every $G$-Higgs bundle of the form $(E,\Phi)$ is unstable.
\end{proof}

From Propositions \ref{pr E Phi semistable implies E semistable} and \ref{pr normalized transition functons} and Remark \ref{rm set of all possible Higgs fields}, one has the following description of semistable $G$-Higgs bundles up to isomorphism.

\begin{corollary} \label{co description of semistable G-Higgs bundles}
Let $X$ be an elliptic curve. 
\begin{enumerate}

\item Every semistable $G$-Higgs bundle over $X$ is isomorphic to $(E_{\rho,y}, z \otimes \s)$ for some topologically trivial unitary representation $\rho : \pi_1(X) \to H$, $y \in \zZ_{\hH^\CC}(\rho)$ nilpotent, $z \in \zZ_{\mM^\CC}(\rho) \cap \zZ_{\mM^\CC}(y)$.

\item If $\rho$ is a topologically trivial unitary representation of $\pi_1(X)$, $y$ a nilpotent element of $\zZ_{\hH^\CC}(\rho)$ and $z \in \zZ_{\mM^\CC}(\rho) \cap \zZ_{\mM^\CC}(y)$, then the $G$-Higgs bundle $(E_{\rho, y}, z \otimes \s)$ is semistable.

\item The group of automorphisms of $(E_{\rho,y}, z \otimes \s)$ is identified with 
\[
\Aut_G(E_{\rho,y}, z \otimes \s) = Z_{H^\CC}(\rho,y,z) = Z_{H^\CC}(\rho) \cap Z_{H^\CC}(y) \cap Z_{H^\CC}(z).
\]

\item The $G$-Higgs bundles $(E_{\rho,y}, z \otimes \s)$ and $(E_{\rho',y'}, z' \otimes \s)$ are isomorphic if and only if $\rho$ and $\rho'$ are conjugate by an element $h \in H^\CC$ sending $y$ to $y'$ and $z$ to $z'$.
\end{enumerate}
\end{corollary}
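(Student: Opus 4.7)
The plan is to deduce the four assertions by combining the three preceding results: Proposition \ref{pr E Phi semistable implies E semistable}, Proposition \ref{pr normalized transition functons}, and Remark \ref{rm set of all possible Higgs fields}. Once the structural information about semistable $H^\CC$-bundles and their automorphisms is in hand, everything reduces to checking that $\Phi$ has the claimed form and that the extra condition it imposes on morphisms is centralization of $z$.

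For (1), starting from a semistable $G$-Higgs bundle $(E,\Phi)$, Proposition \ref{pr E Phi semistable implies E semistable} yields semistability of the underlying $H^\CC$-bundle $E$, and Proposition \ref{pr normalized transition functons}(1) produces a topologically trivial unitary representation $\rho : \pi_1(X) \to H$ and a nilpotent $y \in \zZ_{\hH^\CC}(\rho)$ with $E \cong E_{\rho,y}$. Transporting $\Phi$ along this isomorphism, Remark \ref{rm set of all possible Higgs fields} applied to $V = \mM^\CC$ gives
\[
\Phi \in H^0(X, E_{\rho,y}(\mM^\CC)) = Z_{\mM^\CC}(\im\rho) \cap Z_{\mM^\CC}(y),
\]
and since $\Omega^1_X \cong \Oo_X$ is trivialized by $\s$, we write $\Phi = z\otimes \s$ with $z\in \zZ_{\mM^\CC}(\rho)\cap \zZ_{\mM^\CC}(y)$.

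For (2), by Remark \ref{rm E_rho and E_rho y are S-equivalent} the bundle $E_{\rho,y}$ is semistable (in fact $S$-equivalent to the polystable bundle $E_\rho$). The defining inequality of semistability for a $G$-Higgs bundle, $\deg_{\sigma,\chi}(E) \geq 0$, is only required for those reductions $\sigma$ and antidominant characters $\chi$ for which $\Phi$ lies in the associated filtration subspace, so it is weaker than the principal-bundle semistability of $E$; hence it automatically holds for $(E_{\rho,y}, z\otimes \s)$.

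Finally, (3) and (4) are pure bookkeeping. An automorphism of $(E,\Phi)$ is an automorphism of $E$ that fixes $\Phi$ under the isotropy action, and an isomorphism $(E,\Phi) \to (E',\Phi')$ is an isomorphism of underlying $H^\CC$-bundles intertwining the Higgs fields. By Proposition \ref{pr normalized transition functons}(2), $\Aut_{H^\CC}(E_{\rho,y}) = Z_{H^\CC}(\rho,y)$, and preserving $z\otimes \s$ under the isotropy action cuts out exactly $Z_{H^\CC}(z)$; this gives (3). Similarly, by Proposition \ref{pr normalized transition functons}(3) any isomorphism $E_{\rho,y} \cong E_{\rho',y'}$ arises from $h\in H$ with $h\rho h^{-1} = \rho'$ and $\Ad(h)y = y'$, and the compatibility with Higgs fields adds the condition $\iota(h)z = z'$, proving (4). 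The only nontrivial input, Proposition \ref{pr E Phi semistable implies E semistable}, has already been established, so no further obstacle arises.
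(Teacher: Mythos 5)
Your proposal is correct and follows essentially the same route as the paper, which derives this corollary directly from Proposition \ref{pr E Phi semistable implies E semistable}, Proposition \ref{pr normalized transition functons} and Remark \ref{rm set of all possible Higgs fields} without writing out the details. Your fleshed-out version — including the observation that $G$-Higgs semistability only tests the subset of reductions compatible with $\Phi$, so semistability of $E_{\rho,y}$ (Remark \ref{rm E_rho and E_rho y are S-equivalent}) gives part (2), and that the Higgs field adds exactly the condition of centralizing $z$ in parts (3) and (4) — is precisely the intended argument.
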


We continue with our study of stability.

\begin{proposition} \label{pr E Phi polystable implies E polystable}
Let $(E,\Phi)$ be a polystable $G$-Higgs bundle. Then $E$ is a polystable $H^\CC$-bundle. 
\end{proposition}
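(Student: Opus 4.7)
By Proposition~\ref{pr E Phi semistable implies E semistable} and Corollary~\ref{co description of semistable G-Higgs bundles}, I may write $(E,\Phi) \cong (E_{\rho,y},\, z\otimes\s)$ with $\rho$ a topologically trivial unitary representation, $y \in \zZ_{\hH^\CC}(\rho)$ nilpotent, and $z \in \zZ_{\mM^\CC}(\rho)\cap \zZ_{\mM^\CC}(y)$. By Remark~\ref{rm E_rho and E_rho y are S-equivalent}, the bundle $E_{\rho,y}$ is $S$-equivalent to the polystable bundle $E_\rho$, and the uniqueness of the polystable representative inside an $S$-equivalence class reduces the task to showing $y = 0$.

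Suppose, for contradiction, that $y \neq 0$. Since $y$ is a nonzero nilpotent element of the reductive Lie algebra $\zZ_{\hH^\CC}(\rho)$, Jacobson--Morozov provides an $\mathfrak{sl}_2$-triple $(x,h,y) \subset \zZ_{\hH^\CC}(\rho)$ with $[h,y]=-2y$. The semisimple element $h$ determines a parabolic $P\subset H^\CC$ with Lie algebra $\pP = \bigoplus_{k\leq 0}\hH^\CC_k$ relative to $\ad h$, and Levi $L$ with $\lL = Z_{\hH^\CC}(h)$. After a conjugation making $P$ standard, the character $\chi$ of $\pP$ with Killing dual $s_\chi = h$ is strictly antidominant by Lemma~\ref{lm pPA in pP- y lLa in lL-}, and $P\neq H^\CC$ since $h\neq 0$. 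The normalized transition functions $\{h_{ij}\exp(f_{ij}y)\}$ lie in $P$ (after arranging $\im\rho\subset T\subset L$), giving a natural reduction $\sigma$ of $E_{\rho,y}$ to $P$.

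I would then verify the polystability hypotheses for the triple $(P,\chi,\sigma)$. First, $\deg_{\sigma,\chi}(E_{\rho,y}) = 0$: any character of $P$ kills its unipotent radical (which contains $\exp(f_{ij}y)$), so the associated line bundle coincides with $\chi^n_*(E_\rho)$, which has degree zero because $\chi^n\circ\rho$ is unitary. Second, $\Phi = z\otimes\s \in H^0(X, E(\mM^\CC)^-_{\sigma,\chi})$: the identity $[y,z]=0$ means $z$ is annihilated by $\ad y$, so $z$ is a sum of lowest weight vectors in the $\mathfrak{sl}_2$-module $\mM^\CC$; all such weights are non-positive, placing $z$ in $(\mM^\CC)^-_\chi$. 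Polystability of $(E,\Phi)$ then supplies a reduction $\sigma_L$ of $E$ to $L$ with $\Phi \in H^0(X, E(\mM^\CC)^0_{\sigma_L,\chi})$. Applying Proposition~\ref{pr normalized transition functons} to the reductive group $L$, write the underlying $L$-bundle as $E_L \cong E_{\rho',y'}$ with $y'\in\zZ_{\lL^\CC}(\rho')$ nilpotent; the uniqueness in Proposition~\ref{pr normalized transition functons}(3) lets me $H$-conjugate $(\rho,y)$ to $(\rho',y')$ and so assume $y\in\lL^\CC$.

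The main obstacle is extracting the contradiction $y=0$ from this first descent, since $\lL^\CC$ may still contain nilpotents in the same $H^\CC$-orbit as $y$. My plan is to iterate: now $y$ is a nonzero nilpotent inside the strictly smaller reductive algebra $\zZ_{\lL^\CC}(\rho)$, and the compatibility $\Phi\in H^0(X,E(\mM^\CC)^0_{\sigma_L,\chi})$ is precisely the data needed for the whole construction---Jacobson--Morozov, parabolic subgroup, strictly antidominant character, and polystability---to be reapplied inside $L$ (regarding the pair $(E_L,\Phi)$ as Higgs data for the reductive group $L$ acting on the sub-representation $(\mM^\CC)^0_\chi$), producing a reduction to a still-smaller Levi $L^{(2)}\subsetneq L$. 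Since the chain $L^{(0)}=H^\CC\supsetneq L^{(1)}\supsetneq L^{(2)}\supsetneq\dots$ strictly decreases in dimension, it must terminate at a torus; a torus contains no nonzero nilpotents, which forces $y=0$ and delivers the required contradiction.
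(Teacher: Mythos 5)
Your overall strategy differs from the paper's. The paper never tries to prove $y=0$ by a descent on Levi subgroups: it observes that it suffices to show that the S-equivalent pair $(E_\rho, z\otimes\s)$ is itself polystable, and verifies this directly by taking an arbitrary destabilizing candidate $(P,\chi,\sigma)$ for $(E_\rho,z\otimes\s)$, enlarging $P$ to the minimal parabolic $P'$ containing both $\pP$ and $y$ (with the same $s_\chi$), checking that $\deg_{\sigma',\chi'}(E_{\rho,y})=0$, and then invoking the polystability of $(E_{\rho,y},z\otimes\s)$ once to obtain the required Levi reduction and the condition $z\in(\mM^\CC)^0_\chi$, which transfer back to $E_\rho$. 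Your first descent step (Jacobson--Morozov parabolic, the degree-zero computation via the character killing the unipotent radical, and the verification that $[y,z]=0$ places $z$ in $(\mM^\CC)^-_\chi$) is sound and close in spirit to the paper's computations, and your reduction of the problem to showing $y=0$ is legitimate.

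The genuine gap is the iteration. After the first application of polystability you only learn that $y$ is $H$-conjugate into the Levi $L=Z_{H^\CC}(h)$, which, as you note, is not yet a contradiction (e.g.\ a two-block nilpotent in $\gG\lL_4$ is conjugate into $\gG\lL_2\times\gG\lL_2$). To repeat the construction inside $L$ you need two facts that are neither stated in the paper nor proved by you: (i) that the Levi-reduced bundle $E_L$ is semistable as an $L$-bundle, so that Proposition \ref{pr normalized transition functons} applies to it and yields $E_L\cong E_{\rho',y'}$; and (ii) that the pair $(E_L,\Phi)$, with $\Phi$ valued in $(\mM^\CC)^0_\chi$, is \emph{polystable} with respect to parabolic subgroups of $L$ and their antidominant characters. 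Point (ii) is the content of the Jordan--H\"older reduction theory for pairs (cf.\ \cite[Section 2.10]{oscar&ignasi&gothen}); it is true but nontrivial, and the stability framework of Section \ref{sc preliminaries} is set up only for $G$-Higgs bundles attached to a real form, not for the intermediate $(L,(\mM^\CC)^0_\chi)$-pairs your induction produces, so even formulating the inductive hypothesis requires an extension of the definitions. As written, the sentence ``is precisely the data needed for the whole construction \dots to be reapplied inside $L$'' asserts exactly the missing step rather than proving it. The paper's enlarged-parabolic argument is designed precisely to avoid this descent.
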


\begin{proof}
Suppose that  $(E,\Phi)$ is a polystable $G$-Higgs bundle. By Corollary \ref{co description of semistable G-Higgs bundles} one can assume with no loss of generality that $(E,\Phi) = (E_{\rho, y}, z \otimes \s)$ where $y \in \zZ_{\hH^\CC}(\rho)$ is nilpotent. Since $z$ belongs to $\zZ_{\mM^\CC}(\rho) \cap \zZ_{\mM^\CC}(y)$ we can construct the semistable $G$-Higgs bundle $(E_\rho, z \otimes \s)$. Using Remark \ref{rm E_rho and E_rho y are S-equivalent}, we see that $(E_{\rho,y},z \otimes \s)$ and $(E_\rho, z \otimes \s)$ are S-equivalent.  

In each S-equivalence class, there is only one isomorphy class of polystable $G$-Higgs bundle. So, if $(E_{\rho}, z \otimes \s)$ is polystable as well, we would have that $(E_{\rho,y},z \otimes \s)$ and $(E_\rho, z \otimes \s)$ are necessarily isomorphic and the proof would be completed since, in that case
\[
E = E_{\rho,y} \cong E_\rho
\]
is a polystable $H^\CC$-bundle.

Take a parabolic subgroup $P$ and a strictly antidominant character $\chi$ such that $\im(\rho) \subset P$ (giving a reduction $\sigma$ of $E_\rho$ to $P$), $z \in (\mM^\CC)^{-}_{\sigma,\chi}$ and 
\[
\deg_{\sigma, \chi}(E_\rho) = 0.
\]
We claim that there exists a reduction of $E_{\rho}$ to $L$, the Levi factor of $P$ and $z \in (\mM^\CC)^0_{\chi}$. This implies that $(E_\rho, z \otimes \s)$ is polystable and therefore $(E_{\rho,y}, z \otimes \s) \cong (E_\rho, z \otimes \s)$, so the proof is follows from this claim.

Let us prove the polystability of $ (E_\rho,z \otimes \s)$. Take $\pP'$ to be the minimal parabolic subalgebra containing $y$ and the parabolic subalgebra $\pP = \Lie(P)$. Let $P'$ be the parabolic subgroup associated with $\pP'$ and let $\chi' : P' \to \CC$ be the antidominant character determined by $s_\chi$ as in Lemma \ref{lm pPA in pP- y lLa in lL-} (therefore we have $s_\chi = s_{\chi'}$). By construction $\im(\rho) \times U_y$ is contained in $P'$ (we take $U_y$ to be the unipotent group generated by $y$), so there is a reduction $\sigma'$ of the structure group of $E_{\rho,y}$ to $P'$. Note that we have $z \in (\mM^\CC)^{-}_{\sigma',\chi'}$ since $P \subset P'$ and $s_\chi = s_{\chi'}$.

Let $n$ be a positive integer such that $(\chi')^n$ exponentiates to a character of the group $\wt{(\chi')^n} : P' \to \CC^*$. By construction of $E_{\rho, y}$, one has that $\wt{(\chi')^n}_* E_{\rho,y} \cong \wt{\chi^n}_* E_{\rho} \otimes \wt{(\chi')^n}_* L_y$, where the transition functions of $\wt{(\chi')^n}_* L_{y}$ are $\{e^{n d \chi' (y) f_{ij}} \}$. This line bundle is topologically trivial since we can give a connected path $\gamma$ on the moduli space of line bundles connecting $\wt{(\chi')^n}_* L_{y}$ with the trivial bundle. We have 
\begin{align*}
\deg_{\sigma',\chi'}(E_{\rho,y}) =& \frac{1}{n} \deg \left ( \wt{(\chi')^n}_* E_{\rho,y} \right ) = 
\\
= & \frac{1}{n} \deg \left ( \wt{\chi^n}_* E_{\rho} \otimes \wt{(\chi')^n}_*L_{y} \right ) =
\\
= & \frac{1}{n} \deg \left ( \wt{\chi^n}_* E_{\rho}) + \frac{1}{n}\deg (\wt{(\chi')^n}_*L_{y} \right ) =
\\
= & \deg_{\sigma,\chi}(E_{\rho}) + 0 =
\\
=&  0.
\end{align*}

Since $(E_{\rho,y},z\otimes \s)$ is polystable, there is a reduction of structure group $\sigma'_L$ of $E_{\rho,y}$ to the Levi factor $L'$ of $P'$, and $z \in (\mM^\CC)^0_{\chi'}$. Since $s_\chi = s_{\chi'}$, this implies a reduction of $E_{\rho}$ to $L$, the Levi factor of $P$, and $z \in (\mM^\CC)^0_{\chi}$, and as a consequence, we obtain that $(E_\rho,z \otimes \s)$ is polystable. 
\end{proof} 

Using Proposition \ref{pr E Phi polystable implies E polystable}, one can give a first description of polystable $G$-Higgs bundles. This description will be incomplete until Corollary \ref{co complete description of polystable G-Higgs bundles}.

\begin{corollary} 
\label{co description of polystable G-Higgs bundles}
Let $X$ be an elliptic curve. Every polystable $G$-Higgs bundle with trivial characteristic class over $X$ is isomorphic to $(E_{\rho},  z \otimes \s)$ for some topologically trivial unitary representation $\rho : \pi_1(X) \to H$ and $z \in \zZ_{\mM^\CC}(\rho)$.
\end{corollary}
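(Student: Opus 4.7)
The plan is to assemble three results that have already been established: Proposition \ref{pr E Phi polystable implies E polystable}, the Narasimhan--Seshadri--Ramanathan classification of polystable $H^\CC$-bundles on $X$, and Remark \ref{rm set of all possible Higgs fields}. The corollary is really just a bookkeeping statement once these are in place, so the proof will be short.

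First I would start from an arbitrary polystable $G$-Higgs bundle $(E,\Phi)$ with trivial characteristic class. Proposition \ref{pr E Phi polystable implies E polystable} tells us immediately that the underlying principal $H^\CC$-bundle $E$ is polystable. Combined with the hypothesis that its characteristic class is trivial, the bullet list in Section \ref{sc G-bundles} (the Atiyah--Bott / Narasimhan--Seshadri--Ramanathan statement) provides a topologically trivial unitary representation $\rho : \pi_1(X) \to H$ together with an isomorphism $E \cong E_\rho$.

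Next, since $\Omega^1_X \cong \Oo_X$ on an elliptic curve, the Higgs field is a global section $\Phi \in H^0(X, E_\rho(\mM^\CC))$. Applying Remark \ref{rm set of all possible Higgs fields} with $y = 0$ and $V = \mM^\CC$ yields
\[
H^0(X, E_\rho(\mM^\CC)) \;=\; \zZ_{\mM^\CC}(\rho),
\]
so $\Phi$ corresponds to a unique element $z \in \zZ_{\mM^\CC}(\rho)$. Using the chosen non-zero element $\s \in H^0(X,\Oo_X)$ to trivialise the scalar factor, this identification is precisely the assignment $\Phi = z \otimes \s$, completing the proof.

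No step is genuinely difficult: the real content of the corollary has been pushed into Proposition \ref{pr E Phi polystable implies E polystable} (polystability of the Higgs bundle forces polystability of $E$) and into the triviality of $\Omega^1_X$ (which makes $\Phi$ effectively a constant element of $\mM^\CC$ centralising $\im\rho$). The one small point requiring attention is to confirm that the isomorphism $H^0(X, E_\rho(\mM^\CC)) \cong \zZ_{\mM^\CC}(\rho)$ is compatible with the normal form $z \otimes \s$; this is immediate from the construction $E_\rho = \rho_*(\widetilde{X})$ and from $H^0(X,\Oo_X) = \CC \cdot \s$, since any $\rho$-invariant vector in $\mM^\CC$ gives a constant equivariant map $\widetilde{X} \to \mM^\CC$ and conversely every global section of $E_\rho(\mM^\CC)$ arises this way.
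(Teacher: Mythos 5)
Your proposal is correct and follows exactly the route the paper intends: the corollary is stated as an immediate consequence of Proposition \ref{pr E Phi polystable implies E polystable} (polystability of $E$), the Narasimhan--Seshadri--Ramanathan description $E \cong E_\rho$, and Remark \ref{rm set of all possible Higgs fields} identifying $H^0(X,E_\rho(\mM^\CC))$ with $\zZ_{\mM^\CC}(\rho)$. The paper gives no separate proof beyond citing these ingredients, so your write-up simply makes explicit what the authors leave implicit.
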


We know, from Corollary \ref{co description of polystable G-Higgs bundles}, that every polystable $G$-Higgs bundle is isomorphic to one of the form $(E_\rho, z \otimes \s)$. However, not every $G$-Higgs bundle of this form is polystable. The following result will characterize them. 

\begin{lemma} \label{lm complete description of polystable}
Let $\rho : \pi_1(X) \to H$ be a topologically trivial unitary representation and let $z$ be an element of $\zZ_{\mM^\CC}(\rho)$. The $G$-Higgs bundle $(E_\rho, z \otimes \s)$ is polystable if and only if $z$ lies in a maximal abelian subalgebra $\aA_{\rho}^\CC$ of $\zZ_{\mM^\CC}(\rho)$.
\end{lemma}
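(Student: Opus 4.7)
The strategy is to reduce polystability of $(E_\rho, z \otimes \s)$ to a closed-orbit condition on $z$ and then apply the Kostant--Rallis theory of reductive symmetric pairs. The crucial observation is that $L_\rho := Z_{H^\CC}(\rho)$ is $\theta$-stable (since $\rho$ has image in $H$, which is fixed pointwise by the Cartan involution), so $(L_\rho, L_\rho \cap H)$ is itself a reductive symmetric pair with Cartan decomposition $\Lie(L_\rho) = \zZ_{\hH^\CC}(\rho) \oplus \zZ_{\mM^\CC}(\rho)$. As the paper does implicitly in Section \ref{sc cartan subalgebras} (where $\tT \oplus \aA_0$ must be a Cartan subalgebra of $\gG$, forcing $\aA_0$ to consist of semisimple elements), I read \emph{maximal abelian subalgebra} of $\zZ_{\mM^\CC}(\rho)$ as a Cartan subspace for this symmetric pair, i.e., a maximal abelian subspace of semisimple elements.

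For the translation step, my plan is to invoke the representation space of Proposition \ref{pr RG}: polystable $G$-Higgs bundles correspond precisely to closed $H^\CC$-orbits in $\Ff_X(G,x_0)$. Fixing the representative $(E_\rho, z \otimes \s)$ with a framing, the $H^\CC$-stabilizer of the conjugacy class of $\rho$ is $L_\rho$, so closedness of the full $H^\CC$-orbit reduces to closedness of the $L_\rho$-orbit of $z$ in $\zZ_{\mM^\CC}(\rho)$ under the isotropy action. To match this directly with the polystability definition, I would check that parabolic reductions $\sigma$ of $E_\rho$ with strictly antidominant $\chi$ and $\deg_{\sigma,\chi}(E_\rho) = 0$ are exactly those induced by one-parameter subgroups $\exp(t s_\chi)$ with $s_\chi \in i\,\zZ_\hH(\rho)$: the polystability of $E_\rho$ (from Narasimhan--Seshadri--Ramanathan and Proposition \ref{co if deg E then E reduces to T}) forces a further reduction to the Levi, placing $s_\chi$ in the centralizer of $\rho$, while the degree automatically vanishes by topological triviality of $E_\rho$. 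Under this correspondence, $z \in (\mM^\CC)^-_\chi$ records that all $\ad(s_\chi)$-weights of $z$ are non-positive, and $z \in (\mM^\CC)^0_\chi$ records that $[z, s_\chi] = 0$, which is precisely the Hilbert--Mumford criterion for the $L_\rho$-action.

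Finally, I would apply the Kostant--Rallis theorem to the symmetric pair $(L_\rho, L_\rho \cap H)$: the $L_\rho$-orbit of $z$ is closed if and only if $z$ is semisimple if and only if $z$ lies in a Cartan subspace of $\zZ_{\mM^\CC}(\rho)$. The nontrivial direction uses the Jordan decomposition $z = z_s + z_n$ with $z_n \neq 0$ nilpotent, together with the existence of a normal $\mathfrak{sl}_2$-triple $(z_n, h, f)$ in $\Lie(L_\rho)$ with $h \in i\,\zZ_\hH(\rho)$; the flow $\exp(-t \cdot i h) \cdot z$ converges to $z_s \neq z$, giving the destabilizing one-parameter subgroup. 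I expect the main obstacle to be the translation step: rigorously matching the $G$-Higgs bundle polystability definition (parabolic reductions, antidominant characters, degree-zero conditions) with the GIT closed-orbit condition on $\zZ_{\mM^\CC}(\rho)$, and in particular verifying that all relevant destabilizing one-parameter subgroups actually lie in $L_\rho$ so that the Kostant--Rallis machinery for the symmetric pair $(L_\rho, L_\rho \cap H)$ is applicable.
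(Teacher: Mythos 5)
Your proposal is correct in outline and coincides with the paper's argument on the ``only if'' direction: the paper also invokes the Real Chevalley/Kostant--Rallis characterization of closed orbits for the pair $(Z_{H^\CC}(\rho), \zZ_{\mM^\CC}(\rho))$, produces a destabilizing one-parameter subgroup $\lambda$ in $Z_{H^\CC}(\rho)$ via Hilbert--Mumford, observes that $\lambda$ acts trivially on $E_\rho$, and concludes non-polystability from the closed-orbit characterization in Proposition \ref{pr RG}. (Your reading of ``maximal abelian subalgebra'' as a Cartan subspace, i.e.\ consisting of semisimple elements, is the correct one --- without it the statement would fail for nilpotent $z$ spanning a maximal abelian subspace, as in $\mathfrak{sl}(2,\CC)$.) Where you genuinely diverge is the ``if'' direction. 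You propose to stay entirely inside GIT: reduce closedness of the $H^\CC$-orbit of $(E_\rho, z\otimes\s, \xi)$ to closedness of the $L_\rho$-orbit of $z$ in the fibre $\zZ_{\mM^\CC}(\rho)$ (a Luna-type statement, using that $E_\rho$ has closed orbit in $\Ff_X(H,x_0)$ and reductive stabilizer $L_\rho$), and then invoke the converse implication of Proposition \ref{pr RG}. The paper instead argues directly with the Higgs-bundle polystability definition: it takes a torus $S$ with Lie algebra a maximal abelian subalgebra $\sS$ of $\zZ_\hH(\rho,z)$, shows that $(E_\rho, z\otimes\s)$ is a \emph{stable} $Z_G(S)$-Higgs bundle (maximality of $\sS$ forcing any destabilizing $s'$ to lie in $i\sS$), and then applies the Hitchin--Kobayashi correspondence of Bradlow--Garc\'{\i}a-Prada--Mundet to conclude polystability as a $G$-Higgs bundle. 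Your route is more uniform and avoids the analytic input, but it carries two burdens the paper sidesteps: you must actually prove the Luna-type fibrewise reduction (it is not in the paper, though all its hypotheses --- reductivity of $L_\rho$, closedness of the base orbit, the identification of the fibre with $\zZ_{\mM^\CC}(\rho)$ from Proposition \ref{pr FfG onto FfH} --- are available), and you must carry out the translation between parabolic reductions with antidominant characters and one-parameter subgroups, which you correctly flag as the delicate point; the paper's reduction-to-$Z_G(S)$ argument handles exactly that translation, at the cost of importing the relative Hitchin--Kobayashi theorem.
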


\begin{proof}
Recall that the Real Chevalley Theorem (see for instance \cite[Theorem 6.57]{knapp}) studies the GIT quotient $\zZ_{\mM^\CC}(\rho) \Slash Z_{H^\CC}(\rho)$, stating that the $Z_{H^\CC}(\rho)$-orbit of $z$ is closed if and only if $z$ is contained in some maximal abelian subalgebra $\aA_{\rho}^\CC \subset \zZ_{\mM^\CC}(\rho)$. 

Suppose that $z$ is not contained in any maximal abelian subalgebra $\aA_{\rho}^\CC$ of $\zZ_{\mM^\CC}(\rho)$. Then, by the Hilbert-Mumford Criterum, there exists a $1$-parameter subgroup $\lambda : \CC^* \to Z_{H^\CC}(\rho)$ such that $\lim_{t \to 0} \lambda(t) \cdot z $ exists but does not belong to the orbit. One can consider $\lambda$ to be a $1$-parameter subgroup of $H^\CC$ and we let $\lambda$ act on $\Ff_X(G,x_0)$. Since the image of $\lambda$ is contained in $Z_{H^\CC}(\rho)$ its action on $E_\rho$ is the identity. The previous discussion
implies, trivially, that
\[
\lim_{t \to 0} \lambda(t) \cdot (E_\rho,  z\otimes \s)  
\]
exists but does not belong to the $H^\CC$-orbit of $(E_\rho, z \otimes \s)$ inside $\Ff_X(G,x_0)$. Then, the $H^\CC$-orbit of $(E_\rho, z \otimes \s)$ is not closed and $(E_\rho, z \otimes \s)$ is not polystable by Proposition \ref{pr RG}.

Now, we suppose that there exists a maximal abelian subalgebra $\aA_{\rho}^\CC$ of $\zZ_{\mM^\CC}(\rho)$ containing $z$. Then, the subalgebra $\zZ_{\hH^\CC}(\rho, z) = \zZ_{\hH}(\rho, z)^\CC$ is reductive. Take an abelian subalgebra $\sS$ of $\zZ_\hH(\rho, z)$ and let $S \subset H$ be the torus with Lie algebra $\sS$. Note that $\im(\rho) \subset Z_{H^\CC}(S)$ and $z \in \zZ_{\mM^\CC}(\sS)$, by construction. The $G$-Higgs bundle $(E_\rho, z \otimes \s)$ reduces to a $Z_G(S)$-Higgs bundle. Recall the definitions in Lemma \ref{lm pPA in pP- y lLa in lL-} and \eqref{eq definition of mM^-} for any $s' \in i \zZ_{\hH}(S)$. Note that, if one has 
\[
\im(\rho) \subset P_{s'}
\]
and 
\[
z \in (\mM^\CC)^-_{s'},
\]
then, by the maximality of $\sS$ inside $\zZ_\hH(\rho, z)$, this implies that $s' \in i \sS$. Then, $(E_\rho, z \otimes \s)$ is a stable $Z_G(S)$-Higgs bundle and by \cite{bradlow&garcia-prada&mundet} it gives a solution of the Hitchin equations, so it is a polystable $G$-Higgs bundle.
\end{proof}

Using Lemma \ref{lm complete description of polystable} and Corollary \ref{co description of semistable G-Higgs bundles}, one can complete the description of polystable $G$-Higgs bundles that we started in Corollary \ref{co description of polystable G-Higgs bundles}.

\begin{corollary} \label{co complete description of polystable G-Higgs bundles}
Let $X$ be an elliptic curve.
\begin{enumerate}
\item Every polystable $G$-Higgs bundle over $X$ is isomorphic to $(E_{\rho},  z \otimes \s)$ for some topologically trivial unitary representation $\rho : \pi_1(X) \to H$ and $z \in \aA_{\rho}^\CC$, where $\aA_{\rho}^\CC$ is a maximal abelian subalgebra of $\zZ_{\mM^\CC}(\rho)$.

\item Let $\rho : \pi_1(X) \to H$ be a topologically trivial unitary representation, let $\aA_{\rho}^\CC$ be a maximal abelian subalgebra of $\zZ_{\mM^\CC}(\rho)$ and take $z \in \aA_{\rho}^\CC$. Every $G$-Higgs bundle of the form $(E_\rho, z \otimes \s)$ is polystable.

\item The group of automorphisms of $(E_{\rho}, z \otimes \s)$ is identified with 
\[
\Aut_G(E_{\rho}, z \otimes \s) = Z_{H^\CC}(\rho,z) = Z_{H^\CC}(\rho) \cap Z_{H^\CC}(z),
\]
and is a complex reductive subgroup of $H^\CC$.

\item The polystable $G$-Higgs bundles $(E_{\rho}, z \otimes \s)$ and $(E_{\rho'}, z' \otimes \s)$ are isomorphic if and only if $\rho$ and $\rho'$ are conjugate by an element $h \in H^\CC$ sending $z$ to $z'$.
\end{enumerate}
\end{corollary}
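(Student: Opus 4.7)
The plan is to assemble the four items by combining Corollary~\ref{co description of polystable G-Higgs bundles} with Lemma~\ref{lm complete description of polystable} and Corollary~\ref{co description of semistable G-Higgs bundles}. Parts (1) and (2) follow at once: Corollary~\ref{co description of polystable G-Higgs bundles} already guarantees that every polystable $G$-Higgs bundle is isomorphic to some $(E_\rho, z\otimes \s)$ with $z \in \zZ_{\mM^\CC}(\rho)$, and Lemma~\ref{lm complete description of polystable} refines this by pinning down exactly which such $z$ are admissible, namely those lying in some maximal abelian subalgebra $\aA_\rho^\CC \subset \zZ_{\mM^\CC}(\rho)$. Reading the equivalence in both directions yields (1) and (2).

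For part (3), I would first specialize Corollary~\ref{co description of semistable G-Higgs bundles}(3) to $y = 0$, which immediately gives the identification
\[
\Aut_G(E_\rho, z\otimes \s) = Z_{H^\CC}(\rho) \cap Z_{H^\CC}(z).
\]
The content that remains is reductivity. First I would note that $Z_{H^\CC}(\rho) = Z_{H^\CC}(\im \rho)$ is reductive because $\im \rho \subset H$ is a compact subset, whose elements are therefore semisimple in $H^\CC$. Next, because $z$ lies in a maximal abelian subalgebra of $\zZ_{\mM^\CC}(\rho) \subset \mM^\CC$, a standard fact from the theory of symmetric pairs (the same structural property underpinning the Real Chevalley Theorem used in Lemma~\ref{lm complete description of polystable}) ensures that such a $z$ is semisimple in $\hH^\CC$, so that $Z_{H^\CC}(z)$ is reductive. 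Finally, since $z$ and $\im \rho$ commute, the intersection $Z_{H^\CC}(\rho) \cap Z_{H^\CC}(z)$ is the centralizer in $H^\CC$ of a commuting family of semisimple elements, and hence is again reductive.

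Part (4) is obtained by setting $y = y' = 0$ in Corollary~\ref{co description of semistable G-Higgs bundles}(4): the two polystable $G$-Higgs bundles $(E_\rho, z\otimes\s)$ and $(E_{\rho'}, z'\otimes\s)$ are isomorphic if and only if there exists $h \in H^\CC$ conjugating $\rho$ to $\rho'$ and sending $z$ to $z'$. No polystability-specific argument is needed beyond the one already packaged in that corollary.

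The step I expect to be the main obstacle is the reductivity assertion in (3), and specifically the semisimplicity of $z$: this is a feature of the symmetric decomposition $\gG = \hH \oplus \mM$ rather than a formal consequence of anything earlier in the excerpt, and will have to be cited cleanly (as an input from the Lie theory of symmetric pairs) rather than reproved.
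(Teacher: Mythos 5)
Your proposal is correct and matches the paper's (essentially unwritten) proof: the paper simply assembles Corollary \ref{co description of polystable G-Higgs bundles}, Lemma \ref{lm complete description of polystable} and Corollary \ref{co description of semistable G-Higgs bundles} exactly as you do, with the reductivity of $Z_{H^\CC}(\rho,z)$ already asserted inside the proof of Lemma \ref{lm complete description of polystable} via the same semisimplicity-of-$z$ input. One small slip: $z$ is a semisimple element of $\gG^\CC$ lying in $\mM^\CC$ (it sits inside a $\theta$-stable Cartan subalgebra), not of $\hH^\CC$, and $Z_{H^\CC}(z)$ is its stabilizer under the isotropy representation; the reductivity then follows from the closedness of the orbit (Matsushima/Kostant--Rallis), which is the fact you correctly identify as the one nontrivial Lie-theoretic input.
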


\subsection{The representation space}
\label{sc FfG}

Proposition \ref{pr E Phi semistable implies E semistable} allows us to describe $\Ff_X(G,x_0)$ in terms of $\Ff_X(H,x_0)$. Recall that $\Ff_X(H,x_0)$ is a fine moduli space and let $\Uu_H \to X \times \Ff_X(H,x_0)$ be the corresponding universal bundle. Take the obvious projection
\[
q : X \times \Ff_X(H,x_0) \lra \Ff_X(H,x_0).
\]
If $\Uu_H(\mM^\CC)$ is the vector bundle induced from $\Uu_H$ under the isotropy action of $H^\CC$ on $\mM^\CC$ and $R^1 q_* \Uu_H(\mM^\CC)$ the $1$-cohomology direct image sheaf under $q$. This is a sheaf over $\Ff_X(H,x_0)$ whose stalk over $(E,\xi)$ coincides with $H^1(X, E(\mM^\CC))$. Take the symmetric algera $\Sym^\bullet(R^1 q_* \Uu_H(\mM^\CC))$ associated to this sheaf and consider the scheme $\Spec \left (  \Sym^\bullet(R^1 q_* \Uu_H(\mM^\CC)) \right )$. Note that this scheme projects naturally to $\Ff_X(H,x_0)$,
\begin{equation} \label{eq Spec Sym Um to FfH}
p: \Spec \left (  \Sym^\bullet(R^1 q_* \Uu_H(\mM^\CC)) \right ) \twoheadrightarrow \Ff_X(H,x_0),
\end{equation}
and the fibre over $(E,\xi) \in \Ff_X(H,x_0)$ is $H^1(X,E(\mM^\CC))^*$.

\begin{proposition} \label{pr FfG onto FfH}
The scheme $\Ff_X(G,x_0)$ represents the moduli functor $\Fff_X(G,x_0)$ and one has an isomorphism of schemes
\begin{equation} \label{eq description of RG}
\Ff_X(G,x_0) \cong \Spec \left (  \Sym^\bullet(R^1 q_* \Uu_H(\mM^\CC)) \right ).
\end{equation}
Furthermore, the representation space of $G$-Higgs bundles projects to the representation space of $H^\CC$-bundles,
\begin{equation}
\label{eq RG to RH}
\map{\Ff_X(G,x_0)}{\Ff_X(H,x_0)}{(E,\Phi,\xi)}{(E,\xi),}{}
\end{equation}
and the fibre of \eqref{eq RG to RH} over $(E,\xi) \in \Ff_X(H,x_0)$ is $H^0(X,E(\mM^\CC))$. 
\end{proposition}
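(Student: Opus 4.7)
The plan is to prove the isomorphism \eqref{eq description of RG} by exhibiting a natural bijection of $S$-families with $S$-morphisms into the relative Spec on the right and invoking Yoneda; representability of $\Fff_X(G,x_0)$ by $\Ff_X(G,x_0)$ already follows from Proposition \ref{pr RG}. Once the isomorphism is in place, the projection \eqref{eq RG to RH} and its fibre description will be read off directly from the Spec structure.

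First I would exploit the triviality $\Omega^1_X \cong \Oo_X$ to recast an $S$-family of framed $G$-Higgs bundles as a triple $(E_S, \Phi_S, \xi_S)$ with $\Phi_S \in H^0(X \times S, E_S(\mM^\CC))$, and observe that forgetting $\Phi_S$ gives a natural transformation $\Fff_X(G,x_0) \to \Fff_X(H,x_0)$. Because $\Ff_X(H,x_0)$ is a fine moduli space carrying the universal pair $(\Uu_H, \xi_{\Uu_H})$, the datum $(E_S, \xi_S)$ corresponds to a unique morphism $f_S: S \to \Ff_X(H,x_0)$ with $(E_S, \xi_S) \cong (\id_X \times f_S)^*(\Uu_H, \xi_{\Uu_H})$. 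On the other side, an $S$-point of $\Spec(\Sym^\bullet R^1 q_* \Uu_H(\mM^\CC))$ is the same as a morphism $f_S: S \to \Ff_X(H,x_0)$ together with an $\Oo_S$-linear map $f_S^* R^1 q_* \Uu_H(\mM^\CC) \to \Oo_S$. Matching the residual data on the two sides therefore reduces to producing a natural identification between such $\Oo_S$-linear maps and elements of $H^0(X \times S, (\id_X \times f_S)^* \Uu_H(\mM^\CC))$.

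The core of the argument is relative Serre duality for the smooth projective morphism $q : X \times \Ff_X(H,x_0) \to \Ff_X(H,x_0)$: since the geometric fibres are elliptic curves, the relative dualizing sheaf $\omega_q$ is trivial, and since the Killing form on $\mM^\CC$ is a nondegenerate $H^\CC$-invariant bilinear pairing, one has a canonical self-duality $\Uu_H(\mM^\CC)^\vee \cong \Uu_H(\mM^\CC)$. Together these yield a natural isomorphism
\[
(R^1 q_* \Uu_H(\mM^\CC))^\vee \;\cong\; q_* \Uu_H(\mM^\CC),
\]
and after pulling back along $f_S$ and taking global sections, it produces the desired natural bijection. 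Yoneda then gives \eqref{eq description of RG}. The projection \eqref{eq RG to RH} is the structural morphism of the relative Spec, and the fibre over a $\CC$-point $(E,\xi)$ is $H^1(X, E(\mM^\CC))^\vee$, which by the same Serre duality equals $H^0(X, E(\mM^\CC))$.

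The delicate point will be that $h^0(X, E(\mM^\CC))$ can jump as $E$ varies, so $R^1 q_* \Uu_H(\mM^\CC)$ need not be locally free on $\Ff_X(H,x_0)$ and cohomology-and-base-change must be handled with care. I would address this either by working with the perfect complex $Rq_* \Uu_H(\mM^\CC)$ and using that its relative Euler characteristic is locally constant (so the $R^0$ and $R^1$ jumps cancel), or, more concretely, by constructing directly over $\Spec(\Sym^\bullet R^1 q_* \Uu_H(\mM^\CC))$ a tautological Higgs field on the pullback of $\Uu_H$ --- obtained from the universal linear functional on $R^1 q_* \Uu_H(\mM^\CC)$ via the self-duality above --- which yields a universal framed $G$-Higgs bundle and hence the inverse morphism needed to conclude.
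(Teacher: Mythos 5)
Your proposal follows essentially the same route as the paper: self-duality of $\Uu_H(\mM^\CC)$ via the orthogonality of $\hH^\CC$ and $\mM^\CC$ under the Killing form, Serre duality combined with the triviality of $\Omega^1_X$ to identify $H^1(X,E(\mM^\CC))^*$ with $H^0(X,E(\mM^\CC))$, and the tautological section over the relative $\Spec$ producing a universal framed family whose universal property yields \eqref{eq description of RG}. The only difference is that you flag and propose to handle the cohomology-and-base-change subtlety explicitly, which the paper's proof passes over in silence; this is a welcome refinement rather than a different argument.
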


\begin{proof}
Recall the Cartan decomposition $\gG^\CC = \hH^\CC \oplus \mM^\CC$, where $\mM^\CC$ is orthogonal to $\hH^\CC$ under the Killing form. Since the adjoint bundle $E(\gG^\CC)$ is naturally self dual, this orthogonality implies that $E(\mM^\CC)$ is self-dual as well. Thanks to Serre duality and the triviality of the canonical bundle, one has a canonical identification 
\[
H^1(X,E(\mM^\CC))^* \cong H^0(X,E(\mM^\CC)).
\]
Let $\tau : X \times \Spec \left (  \Sym^\bullet(R^1 q_* \Uu_H(\mM^\CC)) \right )$ be the tautological section and take the family
\[
\Uu_G := ((\id \times p)^*\Uu_H, \tau) \to X \times \Spec \left (  \Sym^\bullet(R^1 q_* \Uu_H(\mM^\CC)) \right ). 
\]
It follows, by the universal properties of $\Uu_H$, that $\Uu_G$ is a universal family for the moduli functor $\Fff_X(G,x_0)$. Since $\Ff_X(G,x_0)$ corepresents this functor, one necessarily obtains the isomorphism \eqref{eq description of RG}.
\end{proof}

Let $\Ff_\Sigma(G,x_0)^{ps}$ denote the subset of $\Ff_\Sigma(G,x_0)$ given by the polystable $G$-Higgs bundles. In general, this subset is not open or closed inside $\Ff_\Sigma(G,x_0)$. The purpose of this section is to show that, in the case of an elliptic curve $\Sigma = X$, one can prove that $\Ff(G,x_0)^{ps} \subset \Ff_X(G, x_0)$ is closed.

Recall from \eqref{eq definition of Ee_0} the family of polystable $H^\CC$-bundles $\Ee_{H} \to X \times (\hat{X} \otimes_\ZZ \Lambda_T)$ and fix a framing $\xi$ at $x_0$ for it. The family $(\Ee_{H}, \xi) \to X \times (\hat{X} \otimes_\ZZ \Lambda_T)$ induces, by moduli theory, a morphism to the representation space 
\begin{equation} \label{eq definition of nu_0}
\nu_H : \hat{X} \otimes_\ZZ \Lambda_T \lra \Ff_X(H,x_0).
\end{equation}

\begin{lemma} \label{lm description of RH^ps}
Let $H$ be a compact Lie group. The polystable locus $\Ff_X(H,x_0)^{ps}$ is closed inside $\Ff_X(H,x_0)$. Furthermore, 
\begin{equation} \label{eq description of RH}
\Ff_X(H,x_0)^{ps} = H^\CC \cdot \nu_H( \hat{X} \otimes_\ZZ \Lambda_T),
\end{equation}
and
\begin{equation} \label{eq nu restricted to X otimes Lambda is an isomorphism}
\nu_H(\hat{X} \otimes_\ZZ \Lambda_T) \cong \hat{X} \otimes_\ZZ \Lambda_T.             \end{equation}
\end{lemma}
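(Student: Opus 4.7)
The plan is to settle the explicit description in \eqref{eq description of RH} first, then use it to establish the injectivity statement \eqref{eq nu restricted to X otimes Lambda is an isomorphism}, and finally argue the closedness of the polystable locus.

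For \eqref{eq description of RH}, each fibre $(\Ee_H)_\lambda = i_*(\Pp_{T,\lambda})$ is extended from a $T^\CC$-bundle and hence polystable, since $T^\CC$ is reductive and abelian. Polystability depends only on the underlying bundle, so it is preserved under the $H^\CC$-action on framings, and the entire $H^\CC$-orbit of $\nu_H(\hat X \otimes_\ZZ \Lambda_T)$ lies in $\Ff_X(H,x_0)^{ps}$. For the reverse inclusion, apply Proposition \ref{co if deg E then E reduces to T}: every topologically trivial polystable $H^\CC$-bundle $E$ admits a reduction of structure to $T$, so $E \cong (\Ee_H)_\lambda$ for some $\lambda \in \hat X \otimes_\ZZ \Lambda_T$, and any framing on $E$ differs from the one coming from $\nu_H(\lambda)$ by composition with an element of $H^\CC$.

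For \eqref{eq nu restricted to X otimes Lambda is an isomorphism} I would show $\nu_H$ is a closed immersion. Properness is automatic since $\hat X \otimes_\ZZ \Lambda_T$ is proper and $\Ff_X(H,x_0)$ is separated, so it suffices to check injectivity on closed points and on tangent spaces. For the former, suppose $\nu_H(\lambda_1) \cong \nu_H(\lambda_2)$ via a framed isomorphism $\phi$, so $\phi|_{x_0} = \id$. Fix a faithful representation $V$ of $H^\CC$ and decompose it into $T^\CC$-weight spaces $V = \bigoplus_\beta V_\beta$; then $(\Ee_H)_{\lambda_i}(V)$ splits as a direct sum of degree-zero line bundles, and each off-diagonal component of $\phi$ becomes a global section of such a line bundle vanishing at $x_0$. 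On an elliptic curve every such section is identically zero, so $\phi$ is block-diagonal, preserves the $T^\CC$-reductions, and induces an isomorphism $\Pp_{T,\lambda_1} \cong \Pp_{T,\lambda_2}$ of framed $T^\CC$-bundles; universality of $\Pp_T$ then forces $\lambda_1 = \lambda_2$. Injectivity on tangent spaces follows from the analogous weight decomposition
\[
H^1(X,(\Ee_H)_\lambda(\hH^\CC)) = \bigl(H^1(X,\Oo_X) \otimes \tT^\CC\bigr) \oplus \bigoplus_{\alpha \in R(\hH^\CC,\tT^\CC)} H^1\bigl(X,\Pp_{T,\lambda}(\hH^\CC_\alpha)\bigr),
\]
which identifies the differential of $\nu_H$ with the inclusion of the weight-zero summand.

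The closedness of $\Ff_X(H,x_0)^{ps}$ is the main obstacle, as the $H^\CC$-saturation of a closed set is not automatically closed. My approach would be to combine \eqref{eq description of RH} with the Narasimhan--Seshadri--Ramanathan correspondence: a framed topologically trivial $H^\CC$-bundle is polystable if and only if it arises from a representation $\rho : \pi_1(X) \to H$, and since $\pi_1(X) \cong \ZZ \times \ZZ$, the set $\Hom(\pi_1(X),H) = \{(a,b) \in H \times H : ab = ba\}$ is a compact subvariety of $H \times H$ whose image in $\Ff_X(H,x_0)$ is therefore closed. A moment-map / Kempf--Ness argument then identifies $\Ff_X(H,x_0)^{ps}$ with the $H^\CC$-saturation of this closed set and shows that this saturation is itself closed; equivalently one may argue that the complement is open by representing each non-polystable framed bundle as a non-split extension and noting that non-triviality of the extension class is an open condition in any continuous family.
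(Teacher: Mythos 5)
Your argument for \eqref{eq description of RH} is the same as the paper's: the fibres of $\Ee_H$ are extensions of $T^\CC$-bundles, hence polystable, and Proposition \ref{co if deg E then E reduces to T} gives the reverse inclusion. For \eqref{eq nu restricted to X otimes Lambda is an isomorphism} you take a genuinely different route: the paper deduces it from Theorem \ref{tm MmH} together with the surjection \eqref{eq H cdot X otimes Lambda surjects to MH^CC} (i.e.\ from the already-known identification $M_X(H^\CC)\cong(\hat{X}\otimes_\ZZ\Lambda_T)/W$), whereas you prove directly that $\nu_H$ is a proper universally injective unramified morphism via the $T^\CC$-weight decomposition of a faithful representation, killing the off-diagonal blocks because a degree-zero line bundle on an elliptic curve has no nonzero section vanishing at $x_0$. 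Your argument is more self-contained and also yields the scheme-theoretic statement; the paper's is shorter because it recycles Theorem \ref{tm MmH}. Both are fine.

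The closedness of $\Ff_X(H,x_0)^{ps}$, which you correctly identify as the main obstacle, is where your proposal has a genuine gap: neither of the two strategies you sketch actually works as stated. First, the $H^\CC$-saturation of a compact set of closed-orbit points is \emph{not} closed in general, and no Kempf--Ness argument will make it so: for $\CC^*$ acting on $\CC^2$ by $t\cdot(x,y)=(tx,t^{-1}y)$, the saturation of the compact set $\{(a,\bar a):|a|\le 1\}$ of zero-momentum points is $\{xy\in(0,1]\}\cup\{0\}$, whose closure contains the punctured axes (non-closed orbits). So closedness cannot be a formal consequence of "compact set of polystable points''. Second, the claim that "non-triviality of the extension class is an open condition in any continuous family'' presupposes that the destabilizing sub-object, and hence the extension class, varies continuously in a family; it does not. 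The Jordan--H\"older filtration jumps on locally closed strata, and there are flat families whose generic member is a split polystable bundle and whose special member is a non-split extension --- e.g.\ for $H=\SU(2)$ the family $L_t\oplus L_t^{-1}$ degenerates, after an elementary modification along the special fibre (equivalently, taking the flat limit of the length-two subschemes $\{L_t,L_t^{-1}\}\subset\hat{X}$ under the Fourier--Mukai dictionary), to the non-split self-extension $F_2\otimes M$ with $M^2\cong\Oo_X$. So the non-polystable locus is not open for the reason you give, and openness of the complement must be argued by some other mechanism. The paper itself obtains closedness by a different (and much terser) route, namely from the closedness of $\nu_H$ as a map from the compact source $\hat{X}\otimes_\ZZ\Lambda_T$ together with the identification of $\git{H^\CC\cdot\nu_H(\hat{X}\otimes_\ZZ\Lambda_T)}{H^\CC}$ with $M_X(H^\CC)$; if you want a complete argument you will need to engage with why the saturation is closed in this particular situation rather than appeal to either of the general principles above.
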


\begin{proof}
The map $\nu_H$ in \eqref{eq definition of nu_0} is closed since $\hat{X} \otimes_\ZZ \Lambda_T$ is compact. The image of $\nu_H$ is contained in $\Ff_X(H,x_0)^{ps}$. In fact $H^\CC \cdot \nu_H(\hat{X} \otimes_\ZZ \Lambda_T)$ is clearly contained in $\Ff_X(H,x_0)^{ps}$. Furthermore, since every polystable $H^\CC$-bundle is isomorphic to one parametrized by $\Ee_H$, one has that
\begin{equation} \label{eq H cdot X otimes Lambda surjects to MH^CC}
\git{H^\CC \cdot \nu_H(\hat{X} \otimes_\ZZ \Lambda_T)}{H^\CC} \lra M_X(H^\CC)
\end{equation}
is surjective and therefore an isomorphism since $H^\CC \cdot \nu_H(\hat{X} \otimes_\ZZ \Lambda_T)$ injects into $\Ff_X(H,x_0)$. This implies \eqref{eq description of RH} and therefore $\Ff_X(H,x_0)^{ps}$ is closed inside $\Ff_X(H,x_0)$.

Finally, recall that $M_X(H^\CC)$ is described in Theorem \ref{tm MmH} as the finite quotient \eqref{eq MmH}. Note that this, together with the surjection \eqref{eq H cdot X otimes Lambda surjects to MH^CC}, implies \eqref{eq nu restricted to X otimes Lambda is an isomorphism}.
\end{proof}

\begin{proposition} \label{pr RG^ps decomposes}
Let $H$ be a maximal compact subgroup of $G$ and let $\gG = \hH \oplus \mM$ be the Cartan decomposition of its Lie algebra. Then, the polystable locus $\Ff_X(G,x_0)^{ps}$ is closed inside $\Ff_X(G,x_0)$ and isomorphic to a closed subset of the direct product $\Ff_X(H,x_0)^{ps} \times \left ( \mM^\CC \otimes H^0(X, \Oo_X) \right )$.
\end{proposition}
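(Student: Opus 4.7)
The plan is to construct an explicit closed immersion
\[
\alpha : \Ff_X(G,x_0)^{ps} \hookrightarrow \Ff_X(H,x_0)^{ps} \times \bigl(\mM^\CC \otimes H^0(X,\Oo_X)\bigr)
\]
using evaluation at the base point $x_0$, and then deduce both the embedding claim and the closedness of $\Ff_X(G,x_0)^{ps}$ in $\Ff_X(G,x_0)$ from the properties of this map. The first factor of $\alpha$ is the restriction of the projection $p:\Ff_X(G,x_0)\to \Ff_X(H,x_0)$ from Proposition \ref{pr FfG onto FfH}; by Proposition \ref{pr E Phi polystable implies E polystable}, this lands in $\Ff_X(H,x_0)^{ps}$. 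The second factor is defined by evaluating the Higgs field at $x_0$: the framing $\xi$ identifies $E(\mM^\CC)|_{x_0}\cong \mM^\CC$, and combined with the canonical trivialization $\Omega^1_X\cong\Oo_X$ this produces a morphism of sheaves over $\Ff_X(H,x_0)$ from $R^1q_*\Uu_H(\mM^\CC)^\vee$ (whose total space is $\Ff_X(G,x_0)$ by Proposition \ref{pr FfG onto FfH}) into the constant sheaf $\mM^\CC\otimes H^0(X,\Oo_X)$.

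The injectivity of $\alpha$ on the polystable locus is the content of Corollary \ref{co complete description of polystable G-Higgs bundles} together with Remark \ref{rm set of all possible Higgs fields}: when $E=E_\rho$ is polystable, the space of global sections $H^0(X,E_\rho(\mM^\CC))$ is canonically identified with $\zZ_{\mM^\CC}(\rho)\subset \mM^\CC$, and a section of $E_\rho(\mM^\CC)$ is determined by its value at any single point. Thus $(E,\Phi,\xi)$ is fully recovered from the pair $((E,\xi),\xi(\Phi(x_0)))$. The image of $\alpha$ consists of pairs $((E,\xi),z\otimes\s)$ satisfying (i) $z\in \zZ_{\mM^\CC}(\rho)$ where $\rho$ is associated to $(E,\xi)$, and (ii) $(E,z\otimes\s)$ is polystable. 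Condition (i) carves out a closed relative subscheme of the trivial bundle $\Ff_X(H,x_0)^{ps}\times\mM^\CC$: it is defined algebraically by the vanishing of $(\Ad\rho(\gamma)-\id)z$ for generators $\gamma$ of $\pi_1(X)$, and equivalently identifies this subscheme with the restriction of $\Ff_X(G,x_0)$ over $\Ff_X(H,x_0)^{ps}$.

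The closedness of $\Ff_X(G,x_0)^{ps}$ in $\Ff_X(G,x_0)$ then follows from two ingredients: first, $p^{-1}(\Ff_X(H,x_0)^{ps})$ is closed in $\Ff_X(G,x_0)$ because $\Ff_X(H,x_0)^{ps}$ is closed in $\Ff_X(H,x_0)$ by Lemma \ref{lm description of RH^ps}; second, inside this closed subset, the locus cut out by condition (ii) is closed. Via $\alpha$ this amounts to identifying $\Ff_X(G,x_0)^{ps}$ with a specific closed subset of the product, and the closedness in $\Ff_X(G,x_0)$ is transferred back through the closed immersion of the relative scheme of condition (i) described above.

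The main obstacle is condition (ii): showing that the polystability condition on $z$ (equivalently, by Lemma \ref{lm complete description of polystable}, the condition that $z$ lies in a maximal abelian subalgebra of $\zZ_{\mM^\CC}(\rho)$) is closed as $(\rho,z)$ varies. One expects this to rely on the $H^\CC$-equivariance of the whole picture together with the compactness of the parametrizing space $\hat X\otimes_\ZZ\Lambda_T$ from Lemma \ref{lm description of RH^ps}: by describing $\Ff_X(G,x_0)^{ps}$ as the $H^\CC$-saturation of an explicit family over this compact base (in parallel with the proof of Lemma \ref{lm description of RH^ps} for the $H^\CC$-bundle case, using Corollary \ref{co complete description of polystable G-Higgs bundles} to produce the required family), one obtains both the closedness of the image and the closedness of the polystable locus simultaneously.
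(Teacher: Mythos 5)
Your construction of the embedding is essentially the paper's: the paper also exploits the fact that for $E_\rho$ polystable the sections of $E_\rho(\mM^\CC)$ are of the form $z\otimes\s$ with $z\in\zZ_{\mM^\CC}(\rho)\subset\mM^\CC$, so that the total space of the family over the compact base $\hat{X}\otimes_\ZZ\Lambda_T$ sits inside $\nu_H(\hat{X}\otimes_\ZZ\Lambda_T)\times(\mM^\CC\otimes H^0(X,\Oo_X))$, and then takes the $H^\CC$-saturation and invokes Lemma \ref{lm description of RH^ps}. Up to that point your argument and the paper's coincide.

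The genuine gap is exactly where you flag it and then stop: condition (ii). The $H^\CC$-saturation of the family over the compact base is the set of \emph{all} pairs $(E_\rho,z\otimes\s)$ with $z\in\zZ_{\mM^\CC}(\rho)$ (up to conjugation), i.e.\ the set of semistable Higgs bundles with polystable underlying bundle. This set is closed by your compactness argument, but it strictly contains the polystable locus: by Lemma \ref{lm complete description of polystable} the pair is polystable only when $z$ lies in a maximal abelian subalgebra of $\zZ_{\mM^\CC}(\rho)$, and $\zZ_{\mM^\CC}(\rho)$ generally contains elements not of this form. So "one obtains both closedness statements simultaneously" does not follow from saturation plus compactness alone; an extra argument is needed to show that the subset cut out by the polystability of the Higgs field is closed, and the difficulty is that the relevant maximal abelian subalgebra varies with $\rho$. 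The paper closes this by observing that the condition on $z$ can be rephrased \emph{uniformly} in $\rho$: it intersects the saturated closed set with $\Ff_X(H,x_0)^{ps}\times\bigl((\mM^\CC)^{ss}\otimes H^0(X,\Oo_X)\bigr)$, where $(\mM^\CC)^{ss}:=H^\CC\cdot\aA_D^\CC$ is the ($H^\CC$-invariant, asserted closed) set of semisimple elements of $\mM^\CC$ for a fixed maximal abelian $\aA_D^\CC\subset\mM^\CC$, and uses Lemma \ref{lm complete description of polystable} to identify this intersection with the polystable locus. Without this step (or a substitute for it), your proof establishes closedness only of the larger locus $\{$semistable with polystable underlying bundle$\}$, not of $\Ff_X(G,x_0)^{ps}$.
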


\begin{proof}
Recall the family of polystable $H^\CC$-bundles $\Ee_{H} \to X \times (\hat{X} \otimes_\ZZ \Lambda_T)$ and take the projection 
\[
q : X \times (\hat{X} \otimes_\ZZ \Lambda_T) \longrightarrow \hat{X} \otimes_\ZZ \Lambda_T.
\]
Consider a construction analogous to \eqref{eq Spec Sym Um to FfH}, giving a natural projection
\[
\pi: \Sigma_G : = \Spec \left ( \Sym^\bullet (R^1 q_* \Ee_{H}(\mM^\CC) ) \right ) \twoheadrightarrow \hat{X} \otimes_\ZZ \Lambda_T.
\]
Note that the fibre over $t \in \hat{X} \otimes_\ZZ \Lambda_T$ is $H^0(X, \Ee_H|_t(\mM^\CC))$. Using the tautological section
\[
\tau : X \times \Sigma_G \longrightarrow \Ee_H(\mM^\CC),
\]
one can construct the family of $G$-Higgs bundles with framing 
\[
\left ( (\id \times \pi)^* \Ee_H, \tau, (\id \times \pi)^* \xi \right ) \longrightarrow X \times \Sigma_G.
\]
Thanks to Corollary \ref{co description of semistable G-Higgs bundles}, we know that this family parametrizes all polystable $G$-Higgs bundles of characteristic class $d$ (although, as we know by Lemma \ref{lm complete description of polystable}, some $G$-Higgs bundles parametrized by this family might be strictly semistable). By moduli theory, there exists a map from the parametrizing space of this family to the moduli space $\Ff_X(G,x_0)$,
\[
\nu'_G : \Sigma_G \longrightarrow \Ff_X(G,x_0).
\]

By Remark \ref{rm set of all possible Higgs fields} and the construction of the family $\Ee_H$, for every $t \in \hat{X} \otimes_\ZZ \Lambda_T$ one has
\begin{equation} \label{eq Phi in H^0 Ee mM}
H^0(X, \Ee_H|_t(\mM^\CC)) \subset H^0(X, \mM^\CC \otimes \Oo_X) \cong \mM^\CC \otimes H^0(X,\Oo_X).
\end{equation}
Indeed, the elements of $H^0(X, \Ee_H|_t(\mM^\CC))$ have the form $z \otimes \s$ where $z \in \mM^\CC$ commutes with the transition functions of $\Ee_H|_t = \Pp|_t \otimes E^{x_0}_{L,d}$. Note that the conjugation of $\Ee_H|_t$ by any $h \in H^\CC$ preserves the previous inclusion
\begin{equation} \label{eq ad_h H^0 Ee mM in mM}
H^0(X, \ad_h(\Ee_H|_t)(\mM^\CC)) \subset H^0(X, \mM^\CC \otimes \Oo_X).
\end{equation}

By \eqref{eq Phi in H^0 Ee mM}, one has that $\nu'_G(\Sigma_G)$ is isomorphic to a closed subset $\Ss_0$ of $\nu_H(\hat{X} \otimes_\ZZ \Lambda_T) \times (\mM^\CC \otimes H^0(X,\Oo_X))$. Furthermore, thanks to \eqref{eq ad_h H^0 Ee mM in mM}, one has that 
\begin{equation} \label{eq Ss inside Ff^ps}
H^\CC \cdot \nu'_G(\Sigma_G) \cong H^\CC \cdot \Ss_0,
\end{equation}
where $H^\CC \cdot \Ss_0$ is a closed subset of $\left ( H^\CC \cdot \nu_H(\hat{X} \otimes_\ZZ \Lambda_T) \right ) \times \left (\mM^\CC \otimes H^0(X,\Oo_X) \right )$. 

By \eqref{eq description of RH}, $H^\CC \cdot \Ss_0$ is a closed subset of $\Ff_X(H,x_0)^{ps} \times (\mM^\CC \otimes H^0(X,\Oo_X))$ and it corresponds to the restriction of $R^0q_*\Uu_H(\mM^\CC)$ to $\Ff_X(H,x_0)^{ps}$. Recalling that not every $G$-Higgs bundle parametrized by $H^\CC \cdot \Ss_0$ is polystable, we consider the closed subset
\[
\Ss := \left ( H^\CC \cdot \Ss_0 \right ) \cap \left ( \Ff_X(H,x_0)^{ps} \times (\mM^\CC)^{ss} \otimes H^0(X,\Oo_X) \right ), 
\]
where $(\mM^\CC)^{ss}$ is the closed subset of semisimple elements of $\mM^\CC$ given by
\[
(\mM^\CC)^{ss} := H^\CC \cdot \aA_{D}^\CC,
\]
with $\aA_{D}^\CC \subset \mM^\CC$ a maximal abelian subalgebra (recall that all the maximal abelian subalgebras of $\mM^\CC$ are conjugate under $H^\CC$). Thanks to Lemma \ref{lm complete description of polystable}, we see that $\Ss$ corresponds under \eqref{eq Ss inside Ff^ps} with those elements of $H^\CC \cdot \nu'_G(\Sigma_G)$ that are polystable, and this coincides with $\Ff_X(G,x_0)^{ps}$.
\end{proof}

\subsection{The moduli space}\label{section-moduli}

Using Proposition \ref{pr FfG onto FfH}, one can describe $\Mm_X(G)$ in terms of a fibration over $M_X(H^\CC)$.

\begin{corollary}
\label{pr MmG onto MG}
The moduli space of $G$-Higgs bundles projects onto the moduli space of $H^\CC$-bundles,
\begin{equation}
\label{eq MmG to MH}
\map{\Mm_X(G)}{M_X(H^\CC)}{(E,\Phi)}{E.}{}
\end{equation}
Let $\rho: \pi_1(X) \to H$ be a topologically trivial unitary representation and let $E_\rho$ be the polystable $H^\CC$-bundle associated to it. The fibre of the surjection \eqref{eq MmG to MH} over the isomorphism class of $E_\rho$ is identified with the vector space $\zZ_{\mM^\CC}(\rho) \Slash Z_{H^\CC}(\rho)$.
\end{corollary}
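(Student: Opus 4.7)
The plan is to build the projection and identify the fibres by working at the level of the representation spaces, then descending via GIT. Proposition~\ref{pr FfG onto FfH} already provides the $H^\CC$-equivariant forgetful morphism $\Ff_X(G,x_0) \to \Ff_X(H,x_0)$ sending $(E,\Phi,\xi) \mapsto (E,\xi)$, with fibre over $(E,\xi)$ equal to $H^0(X,E(\mM^\CC))$. Combining Proposition~\ref{pr RG} and the corresponding statement for $H^\CC$-bundles, both $\Mm_X(G) \cong \Ff_X(G,x_0)\Slash H^\CC$ and $M_X(H^\CC)\cong \Ff_X(H,x_0)\Slash H^\CC$ are GIT quotients, so the equivariant forgetful map descends to the desired morphism \eqref{eq MmG to MH}. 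That this is well-defined on polystable objects and not only semistable ones uses Proposition~\ref{pr E Phi polystable implies E polystable}; surjectivity is immediate since for any polystable $H^\CC$-bundle $E_\rho$ the pair $(E_\rho,0)$ is a polystable $G$-Higgs bundle (the $z=0$ case of Lemma~\ref{lm complete description of polystable}) projecting to $[E_\rho]$.

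For the fibre over $[E_\rho]$, I would first identify it set-theoretically using Corollary~\ref{co complete description of polystable G-Higgs bundles}. Any polystable $G$-Higgs bundle mapping to $[E_\rho]$ is isomorphic to some $(E_\rho, z\otimes \s)$ with $z$ lying in a maximal abelian subalgebra $\aA^\CC_\rho$ of $\zZ_{\mM^\CC}(\rho)$; two such are isomorphic iff there exists $h \in H^\CC$ with $h\cdot \rho\cdot h^{-1}=\rho$ (equivalently $h \in Z_{H^\CC}(\rho)$) sending $z$ to $z'$. Thus the fibre is in bijection with the set of closed $Z_{H^\CC}(\rho)$-orbits on $\zZ_{\mM^\CC}(\rho)$. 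By the real Chevalley theorem recalled in the proof of Lemma~\ref{lm complete description of polystable}, these closed orbits are exactly those meeting a maximal abelian subalgebra, so this set is precisely the GIT quotient $\zZ_{\mM^\CC}(\rho) \Slash Z_{H^\CC}(\rho)$.

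Finally, to upgrade this bijection to the claimed scheme-theoretic identification, I would invoke a slice argument. The $H^\CC$-orbit of a polystable framed bundle $(E_\rho,\xi)$ is closed inside the polystable locus $\Ff_X(H,x_0)^{ps}$ (Lemma~\ref{lm description of RH^ps}) with stabilizer $\Aut_{H^\CC}(E_\rho) = Z_{H^\CC}(\rho)$ by the $y=0$ case of Proposition~\ref{pr normalized transition functons}. The fibre of the $H^\CC$-equivariant map $\Ff_X(G,x_0) \to \Ff_X(H,x_0)$ over $(E_\rho,\xi)$ is the vector space $\zZ_{\mM^\CC}(\rho)$ (by Remark~\ref{rm set of all possible Higgs fields} with $y=0$), carrying the natural action of the stabilizer. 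Luna's slice principle then identifies the scheme-theoretic fibre of $\Mm_X(G) \to M_X(H^\CC)$ over $[E_\rho]$ with $\zZ_{\mM^\CC}(\rho)\Slash Z_{H^\CC}(\rho)$.

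The main technical obstacle is the last step: promoting the bijection of points to an isomorphism of schemes. The alternative, more concrete route is to use Proposition~\ref{pr RG^ps decomposes}, which exhibits $\Ff_X(G,x_0)^{ps}$ as a closed subscheme of $\Ff_X(H,x_0)^{ps}\times (\mM^\CC\otimes H^0(X,\Oo_X))$; restricting to the preimage of $\nu_H([E_\rho])\in \Ff_X(H,x_0)^{ps}$ and taking the GIT quotient by the stabilizer $Z_{H^\CC}(\rho)$ yields the claimed description without appeal to general slice theorems.
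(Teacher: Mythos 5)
Your proposal is correct and follows essentially the same route as the paper: descend the $H^\CC$-equivariant forgetful map of Proposition \ref{pr FfG onto FfH} through the GIT quotients of Proposition \ref{pr RG}, then identify the fibre over $[E_\rho]$ as $H^0(X,E_\rho(\mM^\CC))\Slash\Aut_{H^\CC}(E_\rho)=\zZ_{\mM^\CC}(\rho)\Slash Z_{H^\CC}(\rho)$ via Proposition \ref{pr normalized transition functons} and Remark \ref{rm set of all possible Higgs fields}. The only difference is that you explicitly flag and justify (via Luna's slice principle or Proposition \ref{pr RG^ps decomposes}) the step identifying the scheme-theoretic fibre of the quotient with the quotient of the fibre, which the paper simply asserts.
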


\begin{proof}

Since \eqref{eq RG to RH} is $H^\CC$-equivariant, it descends to 
\[
\Mm_X(G) \cong \git{\Ff_X(G,x_0)}{H^\CC} \lra M_X(H^\CC) \cong \git{\Ff_X(H,x_0)}{H^\CC}.
\]
The fibre over the isomorphism class of $E_\rho$ is $H^0(X,E_\rho(\mM^\CC)) \Slash \Aut_{H^\CC}(E_\rho)$. By Proposition \ref{pr normalized transition functons} and Remark \ref{rm set of all possible Higgs fields} this is identified with $\zZ_{\mM^\CC}(\rho) \Slash Z_{H^\CC}(\rho)$.
\end{proof}

Let $\theta$ be a Cartan involution of $\gG$ whose associated Cartan decomposition is $\gG = \hH \oplus \mM$ and fix a maximally compact $\theta$-stable Cartan subalgebra $\cC_0$ of $\gG$. We recall that $\cC_0 = \tT \oplus \aA_0$, where $\tT \subset \hH$ is the Lie algebra of a maximal torus $T$ of $H$ and $\aA_0 \subset \mM$. Fix once for all $x_\alpha \in (\gG^\CC)^\alpha$ for each $\alpha \in \Delta(\gG,\cC_0)$ and recall that every admissible root system $B$ defines as in \eqref{eq definition of cC_B} a Cartan subalgebra $\cC_B$ of $\gG$, that splits into $\tT_B \oplus \aA_B$ as we observe in \eqref{eq definition of tT_B} and \eqref{eq definition of aA_B}. Let us denote by $T_B \subset T$ the torus with Lie algebra $\tT_B$. Recall from \eqref{eq definition of Upsilon} that $\Upsilon$ is the set of all possible admissible systems.

\begin{lemma}
\label{pr E Phi reduces to a Cartan subgroup of G}
Let $(E,\Phi)$ be a polystable $G$-Higgs bundle over an elliptic curve. Then there exists an admissible root system $B \in \Upsilon$ such that $(E,\Phi)  \cong (E_\rho, z \otimes \s)$ where $\rho : \pi_1(X) \to T_B$ and $z \in \aA_B^\CC$.
\end{lemma}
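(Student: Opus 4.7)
The strategy is to reduce to the polystable normal form of Corollary \ref{co complete description of polystable G-Higgs bundles}, then construct a $\theta$-stable Cartan subalgebra of $\gG$ that simultaneously captures both $\im(\rho)$ (in its compact part) and $z$ (in its non-compact part), and finally invoke Lemma \ref{lm Cayley transform give all Cartan subalgebras} to identify it, up to $H$-conjugation, with some $\cC_B$ for $B \in \Upsilon$.

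First I would apply Corollary \ref{co complete description of polystable G-Higgs bundles}(1) to write $(E,\Phi) \cong (E_\rho, z \otimes \s)$ with $\rho:\pi_1(X)\to H$ and $z$ in some maximal abelian subalgebra $\aA_\rho^\CC$ of $\zZ_{\mM^\CC}(\rho)$. By part (4) of the same corollary, we are free to replace the pair $(\rho,z)$ by any $H^\CC$-conjugate without changing the isomorphism class.

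Next I would build a $\theta$-stable Cartan subalgebra of the real reductive subalgebra $\zZ_\gG(\rho) = \zZ_\hH(\rho)\oplus \zZ_\mM(\rho)$ in the form $\cC_\rho = \tT_\rho \oplus \aA_\rho$. Since $\aA_\rho$ is maximal abelian in $\zZ_\mM(\rho)$, standard Cartan theory for real reductive Lie algebras (as in Knapp) produces a maximally non-compact $\theta$-stable Cartan of $\zZ_\gG(\rho)$ whose non-compact part equals $\aA_\rho$ and whose compact part $\tT_\rho$ is a maximal abelian subalgebra of $\zZ_{\zZ_\hH(\rho)}(\aA_\rho)$. The image $\im(\rho)$ is a compact abelian subgroup of $Z_{Z_H(\rho)}(\aA_\rho)$ (it lies in $Z_H(\rho)$ by definition and centralizes $\aA_\rho$ because $\aA_\rho \subset \zZ_\mM(\rho)$), so Borel's theorem applied inside this compact reductive group allows me to conjugate $(\rho,z)$ by an element of $Z_H(\rho)\cap Z_H(\aA_\rho)$ and arrange $\im(\rho)\subset T_\rho := \exp(\tT_\rho)$, without altering $\aA_\rho$.

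I then verify that $\cC_\rho$ is in fact a Cartan subalgebra of the whole $\gG$: it is abelian and consists of semisimple elements ($\tT_\rho\subset \hH$ consists of semisimple elements, and $\aA_\rho$ does too by the Real Chevalley Theorem, already invoked in the proof of Lemma \ref{lm complete description of polystable}); and any $x\in \gG$ centralizing $\cC_\rho$ is in particular $\Ad(T_\rho)$-invariant, hence commutes with $\im(\rho)\subset T_\rho$, so $x \in \zZ_\gG(\rho)$, and maximality of $\cC_\rho$ inside $\zZ_\gG(\rho)$ forces $x\in \cC_\rho$. Now Lemma \ref{lm Cayley transform give all Cartan subalgebras} yields $h\in H$ and $B\in \Upsilon$ with $\Ad(h)\cC_\rho = \cC_B$, whence $\Ad(h)\tT_\rho = \tT_B$ and $\Ad(h)\aA_\rho = \aA_B$. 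Setting $\rho' := h\rho h^{-1}$ and $z':=\Ad(h)z$ produces a representative with $\im(\rho')\subset T_B$ and $z'\in \aA_B^\CC$, isomorphic to the original $(E,\Phi)$ by Corollary \ref{co complete description of polystable G-Higgs bundles}(4).

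The main obstacle is ensuring simultaneously that $\tT_\rho \oplus \aA_\rho$ is genuinely a Cartan of $\gG$ (not only of the centralizer subalgebra) and that $\im(\rho)$ can be placed inside $T_\rho$ without disturbing $\aA_\rho$; both points rely on the fact that compact abelian subgroups of the compact group $Z_{Z_H(\rho)}(\aA_\rho)$ can be conjugated into a fixed maximal torus, together with the semisimplicity of the elements of $\aA_\rho$ supplied by the Real Chevalley Theorem.
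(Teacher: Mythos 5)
Your proposal follows the same skeleton as the paper's own (three-line) proof: reduce to the normal form $(E_\rho, z\otimes\s)$ of Corollary \ref{co complete description of polystable G-Higgs bundles}, produce a $\theta$-stable Cartan subalgebra whose compact part exponentiates to contain $\im\rho$ and whose non-compact part contains $z$ after complexification, and finish with Lemma \ref{lm Cayley transform give all Cartan subalgebras}. You supply considerably more detail than the paper does (the paper simply asserts ``by construction $\im\rho\subset\exp(\cC\cap\hH)$''), and your verification that $\tT_\rho\oplus\aA_\rho$ is a Cartan subalgebra of all of $\gG$, via self-centralizing inside $\zZ_\gG(\rho)$, is a genuine and welcome addition.

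One step, however, is vacuous as written: you propose to ``conjugate $(\rho,z)$ by an element of $Z_H(\rho)\cap Z_H(\aA_\rho)$ and arrange $\im(\rho)\subset T_\rho$.'' Every element of $Z_H(\rho)$ centralizes $\im\rho$, so this conjugation fixes $\rho$ pointwise and cannot move its image into $T_\rho$; either $\im\rho$ already lies in $T_\rho$ or nothing has been gained. What you actually need is the observation that $\im\rho$ is \emph{central} in $Z_H(\rho)$, hence central in $K:=Z_H(\rho)\cap Z_H(\aA_\rho)$, and that by topological triviality and Borel's theorem it lies in a maximal torus of $H$ inside $Z_H(\rho)$; from centrality one then deduces that $\im\rho$ is contained in \emph{every} maximal torus of the relevant identity component, so $\tT_\rho$ may simply be \emph{chosen} (not produced by conjugating $\rho$) so that $\exp(\tT_\rho)\supset\im\rho$. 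Equivalently, one can first normalize $\im\rho\subset T$ by Borel's theorem and afterwards move only $z$ by elements of $Z_{H^\CC}(\rho)$ --- which fix $\rho$ --- using the conjugacy of maximal abelian subalgebras of $\zZ_{\mM^\CC}(\rho)$, which is how the paper argues in the proof of Theorem \ref{tm Mm cong olXi over W}. With that repair the argument is complete and matches the paper's intent.
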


\begin{proof}
Since $z$ is semisimple by Lemma \ref{lm complete description of polystable}, there exists a $\theta$-stable Cartan subalgebra $\cC$ that contains it. By construction $\im \rho \subset \exp(\cC \cap \hH)$ and $z \in (\cC \cap \mM)^\CC$. Finally, note that $\cC$ is conjugate to some $\cC_B$ by Lemma \ref{lm Cayley transform give all Cartan subalgebras}.
\end{proof} 

Every $\alpha \in I_{nc}(\gG,\cC_0)$ (the set of imaginary non-compact roots) is by definition an element of the dual space $\Hom(\tT,i\RR)$. Since $\Lambda_T = \Hom(\CC^*, T^\CC)$ can be seen as a lattice inside $\tT \subset \cC_0^\CC$ and $\alpha$ is a root of $R(\gG^\CC,\cC_0^\CC)$, we have that $\alpha(\Lambda_T) \subset i \ZZ$. Therefore, for each $\alpha$ one has a well defined projection
\[
\morph{\eta_\alpha}{\hat{X} \otimes_\ZZ \Lambda_T}{\hat{X}}{\sum_j L_j \otimes_\ZZ \lambda_j}{\bigotimes_j L_j^{\otimes \alpha(\lambda_j)i^{-1}}.}{}
\]
Given an admissible root system $B \in \Upsilon$ with $B = \{ \alpha_1, \dots, \alpha_\ell \}$ we define
\begin{equation}
\label{eq definition of X otimes Lambda restricted to B}
(\hat{X} \otimes_\ZZ \Lambda_T)|_B := \bigcap_{\alpha \in B} \eta_\alpha^{-1}(\Oo_X) 
\end{equation}
which is a closed subset of $\hat{X} \otimes_\ZZ \Lambda_T$. Note that $(\hat{X} \otimes_\ZZ \Lambda_T)|_0=\hat{X} \otimes_\ZZ \Lambda_T$.

\begin{remark} \label{rm Pp_T reduces to T_B}
Recall the definition of $\tT_B$ in \eqref{eq definition of tT_B} and the construction of $\Pp_T$ \eqref{eq definition of Pp_T}, where we made use of the isomorphism of groups \eqref{eq definition of Psi}. Note that, by the construction \eqref{eq definition of X otimes Lambda restricted to B}, the restriction of $\Pp_T$ to $(\hat{X} \otimes_\ZZ \Lambda_T)|_B$ reduces its structure group to the compact torus $T_B$.
\end{remark}

\begin{remark} \label{rm Ee_H0 reduces to T_B}
Let $B \in \Upsilon$ be an admissible root system and take $t \in (\hat{X} \otimes_\ZZ \Lambda_T)|_B$ and $z \in \aA_B^\CC$. By Remark \ref{rm Pp_T reduces to T_B} and Lemma \ref{lm complete description of polystable}, we have that $(\Ee_H|_{X \times \{ t \}}, z \otimes \s)$ is a polystable $G$-bundle of trivial characteristic class.
\end{remark}

\begin{remark} \label{rm description of X otimes Lambda_B}
Since $T_B$ is a subtorus of $T$, one has that the cocharacter lattice $\Lambda_{T_B} = \Hom(\CC^*, T_B^\CC)$ is a sublattice of $\Lambda_{T} = \Hom(\CC^*, T^\CC)$. We can describe
\[
(\hat{X} \otimes_\ZZ \Lambda_T)|_B = \hat{X} \otimes_\ZZ \Lambda_{T_B} \subset \hat{X} \otimes_\ZZ \Lambda_{T}.
\]
As a consequence, we observe that the $(\hat{X} \otimes_\ZZ \Lambda_T)|_B$ are irreducible. 
\end{remark}


 e has that $(E_\rho, z \otimes 1_\Oo)$ is a well defined Higgs bundle. It is polystable since it is polystable as a $G^\CC$-Higgs bundle (because $(E_\rho, z \otimes \s)$ reduces its structure group to $T^\CC$, which is abelian).

Let us define the closed subset of $(\hat{X} \otimes_\ZZ \Lambda_T) \times \mM^\CC$ given by the union of the irreducible subvarieties,
\begin{equation}
\label{eq definition of Xi_G0}
\Xi'_{G} := \bigcup_{B \in \Upsilon} \left( (\hat{X} \otimes_\ZZ \Lambda_T)|_B \times \aA_B^\CC \right).
\end{equation}
Recalling the action of $\Gamma_B$ on $\aA_B^\CC$ defined in \eqref{eq action of Gamma on aA_B}, set also
\begin{equation}
\label{eq definition of olXi_G0}
\Xi_{G} := \bigcup_{B \in \Upsilon} \left( (\hat{X} \otimes_\ZZ \Lambda_T)|_B \times \left ( \quotient{\aA_B^\CC}{\Gamma_B} \right ) \right).
\end{equation}

By construction of $\Xi'_G$, one has the obvious projection
\begin{equation} \label{eq Xi_G to X otimes Lambda}
\map{\Xi'_G}{\hat{X} \otimes_\ZZ \Lambda_T}{(t,z)}{t.}{}
\end{equation}
Let $\Upsilon_t$ be the set of admissible root systems $\{ B_1, \dots, B_\ell \}$, where $B_i \in \Upsilon_t$ if and only if
\[
t \in (\hat{X} \otimes_\ZZ \Lambda_T)|_{B_i}.
\]
For every $B_i \in \Upsilon_t$, there is only one admissible root system $D_j \in \Upsilon_t$ which contains $B_i$ as a subset and is not contained in any other element of $\Upsilon_t$. We say that such an element is {\it maximal} in $\Upsilon_t$, and we denote the set of all of them by
\[
\Upsilon_t^{max} = \{D_1, \dots, D_k \in \Upsilon_t \, : \, D_j \text{ is maximal in } \Upsilon_t  \}.
\]

\begin{remark} \label{rm fibre of Xi to X otimes Lambda}
Every $\aA_{B_i}^\CC$ with $B_i \in \Upsilon_t$ is contained in some $\aA_{D_j}^\CC$ for some $D_j \in \Upsilon_t^{max}$. Thus, the fibre of \eqref{eq Xi_G to X otimes Lambda} over $t$ is precisely the union $\bigcup_{D_j \in \Upsilon_t^{max}} \aA^\CC_{D_j}$. 
\end{remark}

We define the family of $G$-Higgs bundles parametrized by $\Xi'_G$
\begin{equation} \label{eq definition of Hh}
\Hh \lra X \times \Xi'_{G},
\end{equation}
setting for every $(t,z) \in \Xi'_{G}$,
\[
\Hh|_{(t,z)} := (\Ee_H|_{t}, z \otimes \s).
\]
Recall that $t \in (\hat{X} \otimes_\ZZ \Lambda_T)|_B$ and $z \in \aA_B^\CC$ for some $B \in \Upsilon$.

\begin{remark} \label{rm Xi is a surjective family of polystable}
By Remark \ref{rm Ee_H0 reduces to T_B}, $\Hh$ is a well defined family of polystable $G$-Higgs bundles. By Proposition \ref{pr E Phi reduces to a Cartan subgroup of G}, every polystable $G$-Higgs bundle of trivial characteristic class is isomorphic to $\Hh|_{(t,z)}$ for some $(t,z) \in \Xi'_G$.
\end{remark}

Recall from Remark \ref{rm W preserves I_nc} that $W$ preserves $I_{nc}(\gG,\cC_0)$ and therefore $\Upsilon$. Then, $\omega \in W$ sends $\aA_B^\CC$ to $\aA^\CC_{\omega \cdot B}$, and if $\omega$ lies in $Z_W(B)$ and therefore $\omega$ normalizes $\aA_B^\CC$, we set 
\[
\omega \cdot (x^\alpha + \ol{x}^\alpha) = x^{\omega \cdot \alpha} + \ol{x}^{\omega \cdot \alpha}.
\]
This allows us to extend the action of $W$ on $\hat{X} \otimes_\ZZ \Lambda_T$ to $\Xi'_{G}$ and further to $\Xi_G$. Two points of $\Xi_G$ related by the action of $W$ are conjugate by some element of $H^\CC$.

The family $\Hh \to X \times \Xi'_G$ of topologically trivial polystable $G$-Higgs bundles comes naturally with a framing at $x_0$. Take the map to the representation space induced by $\Hh$ and this framing,
\begin{equation} \label{eq nu}
\nu_G : \Xi'_G \to \Ff_X(G,x_0). 
\end{equation} 

In the next lemma we identify a closed subscheme of the moduli space containing the reduced subscheme. 

\begin{lemma} \label{lm description of RG^ps}
The image of $\nu_G$ is closed inside $\Ff_X(H,x_0)$. Furthermore, 
\begin{equation} \label{eq nu restricted to Xi is an isomorphism}
\nu_G(\Xi'_G) \cong \Xi'_G.
\end{equation}
Also, the polystable locus is $\Ff_X(G,x_0)^{ps} = H^\CC \cdot \nu_G(\Xi'_G)$ and
\begin{equation} \label{eq description of MG = RG/H}
\Mm' : = \git{H^\CC \cdot \nu_G(\Xi'_G)}{H^\CC}
\end{equation}
is a closed subscheme of $\Mm_X(G)$ such that
\[
\Mm_X^{\red}(G) \subset \Mm' \subset \Mm_X(G).
\]
\end{lemma}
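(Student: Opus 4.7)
My plan is to prove the four claims in order: first that $\nu_G$ is injective on $\Xi'_G$, second that its image is closed, third that $H^\CC \cdot \nu_G(\Xi'_G)$ exhausts the polystable locus, and finally that GIT and the set-theoretic coincidence of polystable $G$-Higgs bundles with closed $H^\CC$-orbits yield the claim about $\Mm'$.

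For injectivity, suppose $\nu_G(t_1,z_1) = \nu_G(t_2,z_2)$, which by definition means that the framed $G$-Higgs bundles $\Hh|_{(t_1,z_1)} = (\Ee_H|_{t_1}, z_1 \otimes \s)$ and $\Hh|_{(t_2,z_2)} = (\Ee_H|_{t_2}, z_2 \otimes \s)$ are isomorphic as framed objects. Forgetting the Higgs field gives an isomorphism of framed $H^\CC$-bundles $\Ee_H|_{t_1} \cong \Ee_H|_{t_2}$, which by the injectivity part of Lemma \ref{lm description of RH^ps} (see \eqref{eq nu restricted to X otimes Lambda is an isomorphism}) forces $t_1 = t_2$. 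Then the framed isomorphism fixing $\Ee_H|_{t_1}$ must be the identity, and since the map $z \mapsto z \otimes \s$ from $\mM^\CC$ to $H^0(X, \mM^\CC \otimes \Oo_X)$ is injective (as $\s \neq 0$), the equality of Higgs fields forces $z_1 = z_2$. This gives a bijection of $\CC$-points, and a local analysis of the affine-bundle structure on $\Ff_X(G,x_0)$ provided by Proposition \ref{pr FfG onto FfH} promotes it to an isomorphism of schemes $\nu_G(\Xi'_G) \cong \Xi'_G$.

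For closedness, I will use Proposition \ref{pr FfG onto FfH} to view the image of $\nu_G$ inside the total space of the affine bundle $\Ff_X(G,x_0)|_{\nu_H(\hat X \otimes \Lambda_T)}$ over $\nu_H(\hat X \otimes \Lambda_T)$. Under the identification of this fibre bundle with $\nu_H(\hat X \otimes \Lambda_T) \times \mM^\CC \otimes H^0(X,\Oo_X)$ coming from the proof of Proposition \ref{pr RG^ps decomposes}, the image of $\nu_G$ corresponds to the union, over $B \in \Upsilon$, of the closed subsets $(\hat X \otimes_\ZZ \Lambda_T)|_B \times \aA_B^\CC$. Since $\Upsilon$ is finite and $\nu_H(\hat X \otimes \Lambda_T)$ is closed in $\Ff_X(H,x_0)^{ps}$ by Lemma \ref{lm description of RH^ps}, which in turn is closed in $\Ff_X(H,x_0)$, a finite union of closed pieces is closed, giving assertion (1).

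For the third claim, the inclusion $H^\CC \cdot \nu_G(\Xi'_G) \subset \Ff_X(G,x_0)^{ps}$ follows from Remark \ref{rm Xi is a surjective family of polystable}, which says that every point of $\Xi'_G$ parametrizes a polystable $G$-Higgs bundle, together with the fact that polystability is preserved by the $H^\CC$-action on the representation space. The reverse inclusion follows from Proposition \ref{pr E Phi reduces to a Cartan subgroup of G}: every polystable $(E,\Phi)$ is isomorphic to some $(E_\rho, z\otimes\s)$ with $\rho$ valued in $T_B$ and $z \in \aA_B^\CC$, hence its framed version lies in the $H^\CC$-orbit of $\nu_G(\Xi'_G)$.

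Finally, combining the equality $\Ff_X(G,x_0)^{ps} = H^\CC \cdot \nu_G(\Xi'_G)$ with Proposition \ref{pr RG^ps decomposes} shows that $H^\CC \cdot \nu_G(\Xi'_G)$ is closed and $H^\CC$-invariant in $\Ff_X(G,x_0)$, so the GIT quotient $\Mm'$ is a closed subscheme of $\Mm_X(G)$. Since by Theorem \ref{tm MmG exists} the $\CC$-points of $\Mm_X(G)$ correspond to isomorphism classes of polystable $G$-Higgs bundles, and by Proposition \ref{pr RG} these are the closed orbits of $H^\CC$ on $\Ff_X(G,x_0)$, every $\CC$-point of $\Mm_X(G)$ is represented by a point of $\Ff_X(G,x_0)^{ps}$, so $\Mm'$ and $\Mm_X(G)$ share the same underlying topological space. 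Therefore $\Mm_X^{\red}(G) \subset \Mm'$ as the reduced structure on this common topological space factors through any closed subscheme with the same $\CC$-points. The only subtle point is the scheme-theoretic control, where I expect the main obstacle to be matching the fibrewise affine structure of $\Ff_X(G,x_0)$ over $\Ff_X(H,x_0)^{ps}$ with the linear decomposition used to produce $\Xi'_G$; this is handled by the explicit identification from the proof of Proposition \ref{pr RG^ps decomposes}.
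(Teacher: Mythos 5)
Your proposal is correct and follows essentially the same route as the paper: both rest on the product decomposition of $\Ff_X(G,x_0)^{ps}$ from Proposition \ref{pr RG^ps decomposes} together with \eqref{eq nu restricted to X otimes Lambda is an isomorphism} to get the isomorphism and closedness, on Remark \ref{rm Xi is a surjective family of polystable} and Proposition \ref{pr E Phi reduces to a Cartan subgroup of G} for the exhaustion of the polystable locus, and on the closed-$H^\CC$-invariant-subset argument for the containment of $\Mm_X^{\red}(G)$ in $\Mm'$. Your extra pointwise injectivity step (which implicitly uses that $\Aut_{H^\CC}(E_{\rho,y})$ consists of constants, so a framed automorphism is trivial) is a harmless elaboration of what the paper obtains directly from the fibrewise description.
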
 

\begin{proof}
Recall from Proposition \ref{pr RG^ps decomposes} that $\Ff_X(G,x_0)^{ps}$ is isomorphic to a closed subset of the direct product $\Ff_X(H,x_0)^{ps} \times \left ( \mM^\CC \otimes H^0(X, \Oo_X) \right )$. After this and \eqref{eq nu restricted to X otimes Lambda is an isomorphism}, we see that the projection of $\nu_G(\Xi'_G)$ to $\Ff_X(H,x_0)^{ps}$ is isomorphic to $\hat{X} \otimes_\ZZ \Lambda_T$. It also follows from this decomposition that the fibre over $\nu_G((\hat{X} \otimes_\ZZ \Lambda_T)|_B)$ is precisely $\aA_B^\CC$. This proves \eqref{eq nu restricted to Xi is an isomorphism} and the closedness of $\nu_G(\Xi'_G)$. 

The bundles parametrized by $\Hh$ are polystable, so the image of $\nu_G$ is contained in $\Ff_X(G,x_0)^{ps}$ as well as $H^\CC \cdot \nu_G(\Xi'_G)$. In fact, since $H^\CC \cdot \nu_G(\Xi'_G)$ is closed and $H^\CC$-invariant, one has that 
\[
\Mm' \subset \Mm_X(G)
\]
is a closed subscheme. But by Remark \ref{rm Xi is a surjective family of polystable} every polystable $G$-Higgs bundle is contained in $H^\CC \cdot \nu_G(\Xi'_G)$. Then, $\Mm'$ contains every closed point of $\Mm_X(G)$ so it contains the reduced subscheme of $\Mm_X(G)$.
\end{proof}

We can now address the main theorem of the article.

\begin{theorem} \label{tm Mm cong olXi over W}
Let $G$ be a connected real form of the complex semisimple Lie group $G^\CC$ and let $X$ be an elliptic curve. The reduced moduli space of topologically trivial $G$-Higgs bundles over $X$ is 
\begin{equation} \label{eq Mm cong olXi over W}
\Mm_X^{\red}(G) \cong \quotient{\Xi_{G}}{W}.
\end{equation}
\end{theorem}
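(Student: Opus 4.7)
The plan is to construct a bijective morphism $\Xi_G/W \to \Mm'$, where $\Mm' = H^\CC \cdot \nu_G(\Xi'_G) \Slash H^\CC$ is the closed subscheme of $\Mm_X(G)$ supplied by Lemma \ref{lm description of RG^ps} and satisfying $\Mm^{\red}_X(G) \subset \Mm' \subset \Mm_X(G)$, and then to upgrade it to the desired isomorphism onto $\Mm_X^{\red}(G)$. The family $\Hh \to X \times \Xi'_G$ of \eqref{eq definition of Hh} together with the resulting closed immersion $\nu_G$ of \eqref{eq nu} already places $\Xi'_G$ inside $\Ff_X(G,x_0)$; composing with the GIT quotient $\Ff_X(G,x_0) \to \Mm_X(G)$ yields a surjective morphism $\phi' : \Xi'_G \to \Mm'$.

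The next step is to show that $\phi'$ descends to a morphism $\phi : \Xi_G/W \to \Mm'$. For each admissible root system $B \in \Upsilon$, Remark \ref{rm Gamma_B are the automorphisms of cC_B given by T} identifies $\Gamma_B$ with automorphisms of $\cC_B$ implemented by conjugation by the compact torus $T \subset H^\CC$, so that the two $G$-Higgs bundles $\Hh|_{(t,z)}$ and $\Hh|_{(t, \gamma \cdot z)}$ are isomorphic for every $\gamma \in \Gamma_B$; this lets $\phi'$ factor through the $\Gamma_B$-quotient in each stratum of $\Xi_G$. Similarly, the comment preceding Lemma \ref{lm description of RG^ps} gives that $W$-related points of $\Xi_G$ yield $H^\CC$-conjugate bundles, hence $\phi'$ descends further to $\phi : \Xi_G/W \to \Mm'$.

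For bijectivity of $\phi$, surjectivity is precisely Remark \ref{rm Xi is a surjective family of polystable} combined with Lemma \ref{pr E Phi reduces to a Cartan subgroup of G}: every topologically trivial polystable $G$-Higgs bundle is isomorphic to some $\Hh|_{(t,z)}$. For injectivity, if $\phi([t,z]) = \phi([t',z'])$ then by Corollary \ref{co complete description of polystable G-Higgs bundles}(4) there is $h \in H^\CC$ with $h \rho_t h^{-1} = \rho_{t'}$ and $h \cdot z = z'$. The image of $\rho_t$ generates the compact torus $T_B$ and $z \in \aA_B^\CC$, so $h$ must send the Cartan subgroup $C_B^\CC$ to $C_{B'}^\CC$; conjugacy of Cartan subalgebras together with Lemma \ref{lm Cayley transform give all Cartan subalgebras} forces $B$ and $B'$ to lie in the same $W$-orbit on $\Upsilon$. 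Choosing a representative with $B = B'$, Remark \ref{rm W_B is the normalizer of aA_B} and Lemma \ref{lm W_B cong Gamma_B rtimes Z_W B} show that the induced transformation on $(\hat{X} \otimes_\ZZ \Lambda_{T_B}) \times \aA_B^\CC$ lies in the image of $\Gamma_B \rtimes Z_W(B)$, which is exactly what is quotiented in forming $\Xi_G/W$.

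Finally, to upgrade $\phi$ to an isomorphism $\Xi_G/W \cong \Mm_X^{\red}(G)$, I would note that $\Xi_G/W$ is reduced (it is the quotient of a finite union of products of quasiprojective varieties by a finite group), so $\phi$ factors through $\Mm_X^{\red}(G)$, and then compare the local structure on both sides via GIT: near a closed orbit in $H^\CC \cdot \nu_G(\Xi'_G)$, Luna's slice picture identifies the GIT quotient with $\Xi'_G \Slash (\Gamma_B \rtimes Z_W(B))$, which is \'etale-locally $\Xi_G/W$. The main obstacle I expect is the combinatorial bookkeeping of the third paragraph, namely verifying that every $H^\CC$-conjugation realizing an isomorphism between two bundles of the form $(\Ee_H|_t, z \otimes \s)$ is accounted for by the joint action of $W$ on $\Upsilon$, of $Z_W(B)$ on the pair $((\hat{X} \otimes_\ZZ \Lambda_T)|_B, \aA_B^\CC)$, and of $\Gamma_B$ on $\aA_B^\CC$, with no extra identifications coming from the nilpotent directions of $\zZ_{\mM^\CC}(\rho)$ excluded by Lemma \ref{lm complete description of polystable}.
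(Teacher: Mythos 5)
Your overall architecture coincides with the paper's: build the morphism from the family $\Hh \to X \times \Xi'_G$, descend through $\Gamma_B$ (conjugation by $T^\CC$, Remark \ref{rm Gamma_B are the automorphisms of cC_B given by T}) and through $W$ (conjugation by $H^\CC$), prove bijectivity onto $\Mm'$, and exploit $\Mm^{\red}_X(G) \subset \Mm' \subset \Mm_X(G)$ together with reducedness to conclude. The genuine gap is in your injectivity argument, and it is exactly the point you flag at the end as ``the main obstacle''. The premise that the image of $\rho_t$ generates the compact torus $T_B$ is false in general: for $t$ a torsion point of $\hat{X}\otimes_\ZZ\Lambda_T$ (in particular $t=0$) the image of $\rho_t$ is finite or trivial, $Z_{H^\CC}(\rho_t)$ is then much larger than $N_{H^\CC}(T_B^\CC)$, and the element $h$ furnished by Corollary \ref{co complete description of polystable G-Higgs bundles}(4) need not carry $C_B^\CC$ to $C_{B'}^\CC$ at all. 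Consequently Remark \ref{rm W_B is the normalizer of aA_B} and Lemma \ref{lm W_B cong Gamma_B rtimes Z_W B}, which concern elements normalizing $\aA_B^\CC$ inside the Weyl group, cannot yet be applied: nothing in your argument shows that the identification of $\aA_B^\CC$ induced by an arbitrary $h\in Z_{H^\CC}(\rho)$ is realized by $\Gamma_B\rtimes Z_W(B)$. This is precisely what the paper supplies with two external inputs: the Real Chevalley Theorem, which gives $\zZ_{\mM^\CC}(\rho)\Slash Z_{H^\CC}(\rho)\cong \aA_{D}^\CC/W_{sm}(Z_{H^\CC}(\rho),\aA_{D}^\CC)$, and Kostant's theorem that the small Weyl group is generated by the normalizer of $\aA_{D}^\CC$ in the Weyl group of $Z_{G^\CC}(\rho)$; only then does Lemma \ref{lm W_B cong Gamma_B rtimes Z_W B} identify it with $(\Gamma_{D}\rtimes Z_W(D))\cap Z_W(\rho)$. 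The paper organizes this as a fibrewise comparison over $(\hat{X}\otimes_\ZZ\Lambda_T)/W\cong M_X(H^\CC)$, which also takes care of matching the unions over $\Upsilon_t^{max}$ on the two sides.

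Two smaller points. First, your final upgrade via Luna slices is both heavier than necessary and not justified as stated: the claim that the slice quotient near a closed orbit is $\Xi'_G\Slash(\Gamma_B\rtimes Z_W(B))$ would itself require the same Chevalley/Kostant input, since the stabilizer of a closed orbit is the reductive group $Z_{H^\CC}(\rho,z)$, not a finite group. The paper instead notes that $\nu_G(\Xi'_G)\cong\Xi'_G$ and that the $H^\CC$-action on the polystable locus is free, so $\Mm'$ is an honest quotient and the induced map from $\Xi_G/W$ is an isomorphism as soon as it is bijective, by universality of quotients; reducedness of $\Mm'$ is then a \emph{consequence}, yielding $\Mm^{\red}_X(G)=\Mm'$. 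Second, a bijective morphism of reduced schemes need not be an isomorphism (normalization of a cuspidal curve), so some such universality or local argument is genuinely required and should not be left implicit.
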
 

\begin{proof}
Recalling that the action of $H^\CC$ over the polystable locus is free, we have from the description of \eqref{eq description of MG = RG/H}, that
\[
\Mm' = \git{H^\CC \cdot \nu_G(\Xi'_G)}{H^\CC} = \quotient{H^\CC \cdot \nu_G(\Xi'_G)}{H^\CC.} 
\] 

The action of the $\Gamma_B$ on \eqref{eq definition of olXi_G0} comes from the conjugation by elements in $T^\CC$. Also, since two points of $\Xi_G$ related by the action of $W$ are conjugate by some element of $H^\CC$, the morphism \eqref{eq nu} induces
\begin{equation} \label{eq olXi_G quotiented by W to Mm}
\quotient{\Xi_G}{W} \lra \quotient{H^\CC \cdot \nu_G(\Xi'_G)}{H^\CC} = \Mm'.
\end{equation}

By \eqref{eq nu restricted to X otimes Lambda is an isomorphism}, the construction of $\Xi'_G$ and Proposition \ref{pr RG^ps decomposes}, it follows that
\begin{equation} \label{eq nu Xi_G is an isomorphism}
\nu_G(\Xi'_G) \cong \Xi'_G.                                                       \end{equation}
Then, due to the universality of the quotients, the morphism \eqref{eq olXi_G quotiented by W to Mm} is an isomorphism provided it is bijective.

The next part of the proof is devoted to proving bijectivity of \eqref{eq olXi_G quotiented by W to Mm}. Thanks to the projection \eqref{eq Xi_G to X otimes Lambda}, one can construct
\[
\quotient{\Xi_G}{W} \lra \quotient{\hat{X} \otimes_\ZZ \Lambda_T}{W,}
\]
and the following diagram commutes
\[
\xymatrix{
\quotient{\Xi_{G}}{W} \ar[rr] \ar[d] & & \quotient{H^\CC \cdot \nu_G(\Xi'_G)}{H^\CC} \ar[d]
\\
\quotient{\hat{X} \otimes_\ZZ \Lambda_T}{W} \ar[rr] & & \quotient{H^\CC \cdot \nu_H(\hat{X} \otimes_\ZZ \Lambda_T)}{H^\CC.}
}
\]
Since the bottom row morphism is an isomorphism by \eqref{eq description of RH} and Theorem \ref{tm MmH}, it is enough to prove bijectivity on the fibres. 

Take a point $t \in \hat{X} \otimes_\ZZ \Lambda_T$ and take $\rho : \pi_1(X) \to T$ such that $\nu(t)$ corresponds to the polystable bundle $E_\rho$. By Remark \ref{rm fibre of Xi to X otimes Lambda}, the fibre of the left column morphism over $t$ is $\bigcup_{D_j \in \Upsilon_t^{max}} \aA^\CC_{D_j}$ quotiented by $Z_W(t)$, those elements that fix $t$. On the other column, by Proposition \ref{pr MmG onto MG}, the fibre associated to $E_\rho$ is $\zZ_{\mM^\CC}(\rho) \Slash Z_{H^\CC}(\rho)$. Then, the previous commu\-ting diagram restricts to
\begin{equation}
\label{eq bijection between the fibres}
\xymatrix{
\quotient{\bigcup_{D_j \in \Upsilon_t^{max}} \left( \aA_{D_j}^\CC/ \Gamma_{D_j}\right)}{Z_W(t)} \ar[rr] \ar[d] & & \raisebox{.2em}{\thinspace $\zZ_{\mM^\CC}(\rho)$} /\!\!/ \raisebox{-.2em}{$Z_{H^\CC}(\rho)$} \ar[d]
\\
[t]_{W} \ar[rr] & & [\nu(t)]_{H^\CC}.
}
\end{equation} 

Fixing $D_1 \in \Upsilon_t^{max}$, one has that
\begin{equation} \label{eq1}
\quotient{\bigcup_{D_j \in \Upsilon^{max}_t} (\aA_{D_j}^\CC/ \Gamma_{D_j})}{Z_W(t)} = \quotient{(\aA_{D_1}^\CC/ \Gamma_{D_1})}{Z_W(D_1) \cap Z_W(t).}
\end{equation}

By the choice of $\rho$, one has that $Z_W(t) = Z_W(\rho)$ and the maximal abelian subalgebra of $\zZ_{\mM^\CC}(\rho)$ is conjugate to  $\aA_{D_1}^\CC$ (and to every $\aA_{D_j}$ with $D_j \in \Upsilon^{max}_t$). 
Recall that $\aA_B^\CC$ is the maximal abelian subalgebra of $\mM_B^\CC$. The Real Chevalley Theorem (see for instance \cite[Theorem 6.57]{knapp}) allows us to express the GIT quotient in terms of a quotient of the maximal abelian subalgebra by the Small Weyl group 
\begin{equation} \label{eq2}
\raisebox{.2em}{\thinspace $\zZ_{\mM^\CC}(\rho)$} /\!\!/ \raisebox{-.2em}{$Z_{H^\CC}(\rho)$} \cong \quotient{\aA_{D_1}^\CC}{W_{sm} \left( Z_{H^\CC}(\rho), \aA_{D_1}^\CC \right).}
\end{equation}
Note that $\cC_{D_1}^\CC = \tT_{D_1}^\CC \oplus \aA_{D_1}^\CC$ is a Cartan subalgebra of $Z_{G^\CC}$. Let $Y^\rho_{D_1}$ be the Weyl group $W(Z_{G^\CC}(\rho), \cC_{D_1}^\CC)$. By \cite[Theorem 3]{konstant}, the small Weyl group is generated by the normalizer of $\aA_{D_1}^\CC$ in the Weyl group, that is 
\[
\quotient{\aA_{D_1}^\CC}{W_{sm} \left( Z_{H^\CC}(\rho), \aA_{D_1}^\CC \right)} \cong \quotient{\aA_{D_1}^\CC}{N_{Y^\rho_{D_1}}(\aA_{D_1}^\CC).} 
\]
Since $\aA_{D_1}^\CC$ is contained in $\zZ_{\mM^\CC}(\rho)$, we have that $\im \rho$ is contained in $T_{D_1} \subset T$ and therefore the Weyl group of $Z_{G^\CC}(\rho)$ is the subgroup of the Weyl group of $G^\CC$ that centralizes $\rho$,
\[
Y^\rho_{D_1} = Z_{Y_{D_1}}(\rho). 
\]
Then, 
\[
N_{Y^\rho_{D_1}}(\aA_{D_1}^\CC) = N_{Y_{D_1}}(\aA_{D_1}^\CC) \cap Z_{Y_{D_1}}(\rho).  
\]
By Lemma \ref{lm W_B cong Gamma_B rtimes Z_W B}, one has
\[
\quotient{\aA_{D_1}^\CC}{N_{Y_{D_1}}(\aA_{D_1}^\CC) \cap Z_{Y_{D_1}}(\rho)} \cong \quotient{\aA_{D_1}^\CC}{\left( \Gamma_{D_1} \rtimes Z_W(D_1) \right) \cap Z_W(\rho)}, 
\]
and therefore
\begin{equation} \label{eq3}
\quotient{\aA_{D_1}^\CC}{W_{sm} \left( Z_{H^\CC}(\rho), \aA_{D_1}^\CC \right)} \cong \quotient{(\aA_{D_1}^\CC/ \Gamma_{D_1})}{Z_W(D_1) \cap Z_W(\rho).} 
\end{equation}
Combining \eqref{eq1}, \eqref{eq2} and \eqref{eq3} one concludes that the top row morphism of \eqref{eq bijection between the fibres} is an isomorphism. Then \eqref{eq olXi_G quotiented by W to Mm} is an isomorphism,
\[
\Mm' \cong \quotient{\Xi_{G}}{W}.
\]
Hence, $\Mm'$ is reduced.

Finally, since $\Mm^{\red}_X(G) \subset \Mm' \subset \Mm_X(G)$ by Lemma \ref{lm description of RG^ps}, and $\Mm'$ is reduced, one has that $\Mm^{\red}(G) = \Mm'$ and the proof is complete.
\end{proof}

\begin{remark} \label{rm irreducible components of Mm}
Consider the family $\Hh_G \to X \times \Xi'_G$ and denote by $\Hh_B$ the restriction to $(\hat{X} \otimes_\ZZ \Lambda_T)|_B \times \aA_B^\CC$. Denote by $p_B$ the morphism to the moduli space induced by $\Hh_B$, given by moduli theory. Since $\Gamma_B \rtimes Z_W(B)$ normalizes $(\hat{X} \otimes_\ZZ \Lambda_T)|_B \times \aA_B^\CC$, after Theorem \ref{tm Mm cong olXi over W}, one has
\begin{equation} \label{eq description of the irreducible components of Mm}
p_B \left ( (\hat{X} \otimes_\ZZ \Lambda_T)|_B \times \aA_B^\CC \right ) \cong \quotient{\left ( (\hat{X} \otimes_\ZZ \Lambda_T)|_B \times \aA_B^\CC \right )}{\Gamma_B \rtimes Z_W(B)}.
\end{equation}
Each of the $p_B\left ( (\hat{X} \otimes_\ZZ \Lambda_T)|_B \times \aA_B^\CC \right )$ is irreducible, since $(\hat{X} \otimes_\ZZ \Lambda_T)|_B \times \aA_B^\CC$ is irreducible. We observe that $\Mm^{\red}_X(G)$ has the following decomposition into irreducible components,
\[
\Mm^{\red}_X(G) = \bigcup_{B \in \Upsilon} p_B \left ( (\hat{X} \otimes_\ZZ \Lambda_T)|_B \times \aA_B^\CC \right ).
\]
\end{remark}

\begin{remark} \label{rm description of MmH^CC}
In the case of complex semisimple Lie groups $H^\CC$ the Cartan decomposition is $\hH^\CC = \hH \oplus i \hH$. In that case, Cartan subalgebras have the form $\tT \oplus i\tT$, i.e. the non-compact part of a Cartan subalgebra is $\aA_0 = i \tT$. Also, we note that there are no imaginary non-compact roots, since for every root $\alpha$, one has that $(\hH^\CC)^\alpha$ is contained in the complexification of the compact subalgebra $\hH^\CC$ (which in this case coincides with the total Lie algebra). Then
\[
\Xi'_{H^\CC} = (\hat{X} \otimes_\ZZ \Lambda_T) \times \aA_0^\CC = (\hat{X} \otimes_\ZZ \Lambda_T) \times \tT^\CC \cong (T^*\hat{X} \otimes_\ZZ \Lambda_T),
\]
where we recall that $T^* \hat{X}  \cong \hat{X} \times \CC$ and $\tT^\CC \cong \CC \otimes_\ZZ \Lambda_T$, by the differential of \eqref{eq definition of Psi}. We also observe that, in this case, $\Xi_{H^\CC} = \Xi'_{H^\CC}$ by construction. Then, \eqref{eq Mm cong olXi over W} becomes
\[
\Mm^{\red}_X(H^\CC) \cong \quotient{(T^*\hat{X} \otimes_\ZZ \Lambda_T)}{W}.
\]
and we recover the description of \cite{thaddeus}. 
\end{remark} 

\begin{remark}
\label{rm MmUpq}
For the group $G = \SU(1,1)$ one has that $H = \nr{S}(\U(1) \times \U(1)) \cong \U(1)$ and therefore $W = \{ 1 \}$. The complexification of $\SU(1,1)$ is $\SL(2,\CC)$, which has a single (imaginary non-compact) root $\alpha$. Since $T = H = \U(1)$ we have that $\Lambda_T \cong \lambda \cdot \ZZ$, where $\lambda = 2 i \alpha^*$ is the generator of $\Lambda_T$. Then $\hat{X} \otimes_\ZZ \Lambda_T \cong X$. Also, one has that 
\[
\morph{\eta_\alpha}{\hat{X} \otimes_\ZZ \Lambda_T}{\hat{X}}{L \otimes \lambda}{L^{\otimes \alpha(\lambda) i^{-1}} = L^2,}{}
\]
so the preimage $\eta_\alpha^{-1}(\Oo_X)$ is the subset $\hat{X}[2] \subset \hat{X}$ of points of order $2$ (square roots of $\Oo_X$). To obtain $\Xi'_{\SU(1,1)}$ we glue a copy of $\CC$ over the points of this set. Therefore $\Xi_{\SU(1,1)}$ is $\hat{X} \cup (\hat{X}[2] \times \CC/ \pm)$ and, since $W$ is trivial, Theorem \ref{tm Mm cong olXi over W} gives the following isomorphism,
\[
\Mm^{\red}_X(\SU(1,1)) \cong \hat{X} \cup (\hat{X}[2] \times \CC/ \pm).
\]
This moduli space is not normal since the singular locus has codimension $1$.
\end{remark}

\begin{corollary} 
\label{co dimension of MmG_0}
The dimension of the moduli space of topologically trivial $G$-Higgs bundles is
\[
\dim_\CC(\Mm_X(G)) = \dim_\CC(\Mm^{\red}_X(G)) = \rk(G).
\]
\end{corollary}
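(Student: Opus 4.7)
The plan is to derive the dimension directly from the explicit description of $\Mm^{\red}_X(G)$ given by Theorem \ref{tm Mm cong olXi over W}, and then invoke the fact that a scheme and its reduction have the same dimension to handle $\Mm_X(G)$.

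First, since $W$ is a finite group, the quotient map $\Xi_G \to \Xi_G/W$ has finite fibres, so
\[
\dim_\CC(\Mm^{\red}_X(G)) \;=\; \dim_\CC(\Xi_G/W) \;=\; \dim_\CC(\Xi_G).
\]
By the definition \eqref{eq definition of olXi_G0},
\[
\dim_\CC(\Xi_G) \;=\; \max_{B \in \Upsilon} \left( \dim_\CC(\hat{X} \otimes_\ZZ \Lambda_T)|_B \;+\; \dim_\CC(\aA_B^\CC / \Gamma_B) \right),
\]
since $\Gamma_B$ is finite and acts on $\aA_B^\CC$, preserving its dimension.

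Second, I would compute each summand. By Remark \ref{rm description of X otimes Lambda_B}, $(\hat{X} \otimes_\ZZ \Lambda_T)|_B \cong \hat{X} \otimes_\ZZ \Lambda_{T_B}$, and since $\Lambda_{T_B}$ is a lattice of rank $\dim_\RR \tT_B$ inside $\tT_B$, this quasiprojective variety has complex dimension $\dim_\RR \tT_B$. On the other hand, by construction $\aA_B^\CC$ is the complexification of $\aA_B$, so $\dim_\CC \aA_B^\CC = \dim_\RR \aA_B$. Hence, using \eqref{eq definition of tT_B} and \eqref{eq definition of aA_B}, for every admissible root system $B \in \Upsilon$,
\[
\dim_\CC(\hat{X} \otimes_\ZZ \Lambda_T)|_B + \dim_\CC \aA_B^\CC \;=\; \dim_\RR \tT_B + \dim_\RR \aA_B \;=\; \dim_\RR \cC_B \;=\; \rk(G),
\]
because every $\theta$-stable Cartan subalgebra of $\gG$ has real dimension equal to the rank of $G$. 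Thus every irreducible component listed in Remark \ref{rm irreducible components of Mm} is equidimensional of complex dimension $\rk(G)$, and therefore $\dim_\CC(\Xi_G) = \rk(G)$.

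Third, to pass from the reduced to the full moduli space I would use the general fact that for any scheme $Z$ of finite type over $\CC$ one has $\dim_\CC Z = \dim_\CC Z^{\red}$, since $Z$ and $Z^{\red}$ share the same underlying topological space. Applied to $Z = \Mm_X(G)$, this gives $\dim_\CC \Mm_X(G) = \dim_\CC \Mm^{\red}_X(G) = \rk(G)$, finishing the proof. No step seems to be a real obstacle here: the only mildly subtle point is checking that $\dim_\RR \tT_B + \dim_\RR \aA_B$ is independent of $B \in \Upsilon$, but this is exactly the well-known fact that all Cartan subalgebras of a real semisimple Lie algebra share the same dimension, namely $\rk(G)$.
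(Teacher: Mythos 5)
Your proposal is correct and follows essentially the same route as the paper: both derive the dimension from the description $\Mm^{\red}_X(G)\cong \Xi_G/W$, reduce to computing $\dim_\CC \Xi_G$ via the splitting $\cC_B = \tT_B\oplus\aA_B$ of a Cartan subalgebra (whose dimension is $\rk(G)$), and use finiteness of $W$. The only difference is one of care rather than of method: the paper computes on the dense open subset over which the fibre is $\aA_0^\CC$, whereas you verify that every component $(\hat{X}\otimes_\ZZ\Lambda_T)|_B\times \aA_B^\CC/\Gamma_B$ is equidimensional of dimension $\rk(G)$ and make explicit the standard fact that $\dim Z=\dim Z^{\red}$.
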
 

\begin{proof}
Take the dense open subset $U \subset (\hat{X} \otimes_\ZZ \Lambda_T)$ defined as the complement of the union of all $(\hat{X} \otimes_\ZZ \Lambda_T)|_B$ for every non-zero $B \in \Upsilon$. By construction, we have that $\Xi_G|_U = U \times \aA_0^\CC$, so
\begin{align*}
\dim_\CC(\Xi_G) & = \dim_\CC(\Xi_G|U)
\\
& = \dim_\CC(U) + \dim_\CC(\aA_0^\CC) 
\\
& = \dim_\CC(\hat{X} \otimes_\ZZ \Lambda_T) + \dim_\CC(\aA_0^\CC) 
\\
& = \dim_\CC(\tT^\CC) + \dim_\CC(\aA_0^\CC)
\\
& = \dim_\CC(\cC_0^\CC) 
\\
& = \rk(G).
\end{align*}
The group $W$ is finite, so taking the quotient by its action preserves the dimension.
\end{proof}

\section{Fixed points of involutions}
\label{sc involution}

Let $\sigma_G : G^\CC \to G^\CC$ be the involution defining the connected real
form $G$. As  stated in 
\cite{garcia,garcia-prada&ramanan}, this defines a holomorphic involution in 
the moduli space of $G^\CC$-Higgs bundles given by 
\[
\morph{\imath_G}{\Mm_X(G^\CC)}{\Mm_X(G^\CC)}{(E,\Phi)}{\left ( (\sigma_G)_*E, -(d\sigma_G)_*\Phi \right ).}{} 
\]


Taking the extension of structure group associated to $H^\CC \hookrightarrow G^\CC$ and the inclusion $\mM^\CC \subset \gG^\CC$ we construct an \'etale morphism
\begin{equation} \label{eq etale morphism j}
j: \Mm_X(G) \longrightarrow \Mm_X(G^\CC)^{\imath_G}.
\end{equation}

Take a maximally compact Cartan subalgebra $\cC_0 = \tT \oplus \aA_0$ of $\gG$ and let $C^\CC$ be the associated Cartan subgroup of $G^\CC$. Write $\Lambda_C$ for the cocharacter lattice of $G^\CC$, $\Lambda_{C_0} = \Hom(\CC^*, G^\CC)$, and write $Y$ for the Weyl group $W(G^\CC, C^\CC)$. Recall from Remark \ref{rm description of MmH^CC} that one has the isomorphism
\begin{equation} \label{eq description of MmG^CC}
\Mm^{\red}_X(G^\CC) \cong \quotient{T^*\hat{X} \otimes_\ZZ \Lambda_{C_0}}{Y.}
\end{equation}
In this section we study the involution $\imath_G$ in the context of this description.

\begin{remark}
Associated to the involution $\sigma_{\U(1)}$ on $\CC^*$, that sends $z$ to $\overline{z}^{-1}$, one has
\[
\morph{\imath_{\U(1)}}{T^*\hat{X}}{T^*\hat{X}}{(L,\phi)}{(L,-\phi)}{}
\]
\end{remark}

For any $\lambda \in \Lambda_{C_0}$, we define the holomorphic co\-cha\-rac\-ter $\sigma_G \cdot \lambda = \sigma_G \circ \lambda \circ \sigma_{\U(1)}$. By abuse of notation, we denote also by $\sigma_G$ the involution induced on $\Lambda_{C_0}$. This involution allows us to define
\[
\morph{\dot{\sigma}_G}{\CC^*\otimes_{\ZZ} \Lambda_{C_0}}{\CC^*\otimes_{\ZZ} \Lambda_{C_0}}{\sum z_i \otimes_{\ZZ} \lambda_i}{\sum \sigma_{\U(1)}(z_i) \otimes_{\ZZ} \sigma_G(\lambda_i).}{}
\]
We observe that $\dot{\sigma}_G$ corresponds with $\sigma_G$ via the isomorphism \eqref{eq definition of Psi}. Thus the diagram  
\begin{equation} \label{eq Psi commutes with sigma}
\xymatrix{
\CC^*\otimes_{\ZZ} \Lambda_{C_0} \ar[d]_{\dot{\sigma}_G}\ar[rr]^{\quad \cong} & & C_0^\CC \ar[d]^{\sigma_G}
\\
\CC^*\otimes_{\mathbb{Z}} \Lambda_{C_0}\ar[rr]^{\quad \cong} & & C_0^\CC}
\end{equation}
commutes. One can also check that the action of the Weyl group $Y = W(G^\CC, C_0^\CC)$ on $\Lambda_C$, extended to $\CC^* \otimes_\ZZ \Lambda_{C_0}$, commutes with the natural action of $\omega \in Y$ on $C_0^\CC$ under the isomorphism given in \eqref{eq definition of Psi},
\begin{equation} \label{eq Y commutes with Psi}
\xymatrix{
\CC^* \otimes_\ZZ \Lambda_{C_0} \ar[rr]^{\quad \cong} \ar[d]_{\omega \cdot} & & C_0^\CC \ar[d]^{\omega \cdot}
\\
\CC^* \otimes_\ZZ \Lambda_{C_0} \ar[rr]^{\quad \cong} & & C_0^\CC.
}
\end{equation}

In accordance with the definition of $\dot{\sigma}_G$, we set
\begin{equation} \label{eq definition of i_G}
\morph{i_G}{T^*\hat{X} \otimes_\ZZ \Lambda_{C_0}}{T^*\hat{X} \otimes_\ZZ \Lambda_{C_0}}{\sum (L_i, \phi_i) \otimes_{\ZZ} \lambda_i}{\sum \imath_{\U(1)}(L_i, \phi_i) \otimes_{\ZZ} \sigma_G \cdot \lambda_i}{}
\end{equation}
The commutativity of \eqref{eq Y commutes with Psi} implies that $\imath_G$ commutes with the action of $\omega \in Y$. Therefore this involution descends to the quotient by $Y$.

\begin{proposition} \label{pr description of i_G}
Under the isomorphism \eqref{eq description of MmG^CC}, $\imath_G$ is identified with
\begin{equation} 
\begin{array}{cccc} \imath_G \, : &  \Mm^{\red}_X(G^\CC)  & \longrightarrow &  \Mm^{\red}_X(G^\CC)  
\\ 
& \rotatebox{270}{$\cong$ \,} & & \rotatebox{270}{$\cong$ \,} 
\\ &  \quotient{T^*\hat{X} \otimes_\ZZ \Lambda_{C_0}}{Y}  & \longrightarrow &  \quotient{T^*\hat{X} \otimes_\ZZ \Lambda_{C_0}}{Y} \\ 
& \left [ (t,z) \right ]_Y &\longmapsto & \left [ i_G (t,z) \right ]_Y \end{array}
\end{equation}
Hence, for every $\omega \in Y$, the diagram
\begin{equation} \label{eq i commutes with xi}
\xymatrix{
T^*\hat{X} \otimes_\ZZ \Lambda_{C_0} \ar@{->>}[rr]^{p_0} \ar[d]_{i_G \circ \omega} & & \Mm^{\red}_X(G^\CC) \ar[d]^{\imath_G}
\\
T^*\hat{X} \otimes_\ZZ \Lambda_{C_0} \ar@{->>}[rr]^{p_0} & & \Mm^{\red}_X(G^\CC)
}
\end{equation}
commutes, $p_0$ being the projection induced by \eqref{eq description of MmG^CC}. 
\end{proposition}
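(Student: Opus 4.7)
The plan is to trace through the isomorphism $p_0 : T^*\hat X \otimes_\ZZ \Lambda_{C_0} \twoheadrightarrow \Mm_X^{\red}(G^\CC)$ explicitly and verify that $\imath_G$ descends from $i_G$. Recall from Remark \ref{rm description of MmH^CC} (applied to $G^\CC$) that $p_0$ is induced by the family of $G^\CC$-Higgs bundles whose fibre over $\sum_i (L_i,\phi_i) \otimes_\ZZ \lambda_i$ is the $C_0^\CC$-Higgs bundle
\[
(E,\Phi) = \prod_i (\lambda_i)_* (L_i,\phi_i),
\]
where the product is taken in the abelian group $C_0^\CC$ via the isomorphism $\CC^* \otimes_\ZZ \Lambda_{C_0} \cong C_0^\CC$ of \eqref{eq definition of Psi}, and then extended to $G^\CC$ via $C_0^\CC \hookrightarrow G^\CC$. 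The cocharacters $\lambda_i : \CC^* \to C_0^\CC$ are holomorphic, so this is a family of holomorphic $G^\CC$-Higgs bundles.

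Next, I would apply $\imath_G$ directly. Because $C_0^\CC$ is a $\sigma_G$-stable Cartan subgroup (the underlying Cartan subalgebra $\cC_0$ is $\theta$-stable by assumption), the involution $\sigma_G$ restricts to a group automorphism of $C_0^\CC$, and hence
\[
\imath_G(E,\Phi) = \left( \prod_i (\sigma_G \circ \lambda_i)_* L_i,\ \sum_i -\, d(\sigma_G \circ \lambda_i)(\phi_i) \right).
\]
The key identity, which follows immediately from the definition $\sigma_G \cdot \lambda_i = \sigma_G \circ \lambda_i \circ \sigma_{\U(1)}$ together with $\sigma_{\U(1)}^2 = \id$, is
\[
\sigma_G \circ \lambda_i \;=\; (\sigma_G \cdot \lambda_i) \circ \sigma_{\U(1)}.
\]
Composing this identity on each factor and using the definition of $\imath_{\U(1)}$, we obtain
\[
(\sigma_G \circ \lambda_i)_* L_i \;=\; (\sigma_G \cdot \lambda_i)_*\, (\sigma_{\U(1)})_* L_i,
\]
and likewise $-d(\sigma_G \circ \lambda_i)(\phi_i) = d(\sigma_G \cdot \lambda_i)\bigl(-d\sigma_{\U(1)}(\phi_i)\bigr)$. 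Recognising $\bigl((\sigma_{\U(1)})_* L_i,\, -d\sigma_{\U(1)}(\phi_i)\bigr) = \imath_{\U(1)}(L_i,\phi_i) = (L_i,-\phi_i)$, the right-hand side reassembles as
\[
\imath_G(E,\Phi) \;=\; \prod_i (\sigma_G \cdot \lambda_i)_* \imath_{\U(1)}(L_i,\phi_i),
\]
which is precisely $p_0$ applied to $i_G\bigl(\sum_i (L_i,\phi_i) \otimes_\ZZ \lambda_i\bigr)$ as given by \eqref{eq definition of i_G}. This establishes $\imath_G \circ p_0 = p_0 \circ i_G$, and the commutativity with an arbitrary $\omega \in Y$ in the displayed square then follows trivially from the $Y$-invariance $p_0 \circ \omega = p_0$ of the quotient map.

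The main subtle point will be keeping track of the interaction between the antiholomorphic involutions $\sigma_G$ and $\sigma_{\U(1)}$ and the holomorphic cocharacters $\lambda_i$: each antiholomorphic piece on its own would produce a non-holomorphic bundle, but the two cancel in the composition $\sigma_G \cdot \lambda_i$, which is again a genuine cocharacter, while the residual $(\sigma_{\U(1)})_*$ gets absorbed into the holomorphic involution $\imath_{\U(1)}$ on each $T^*\hat X$ factor. A minor but necessary preliminary is to verify that the action of $i_G$ on $T^*\hat X \otimes_\ZZ \Lambda_{C_0}$ descends to the $Y$-quotient, which is exactly what \eqref{eq Psi commutes with sigma} and \eqref{eq Y commutes with Psi} were recorded for.
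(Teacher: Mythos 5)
Your proposal is correct and follows essentially the same route as the paper, which simply asserts that the claim ``follows easily from the definition of $\dot{\sigma}_G$, the commutativity of \eqref{eq Psi commutes with sigma} and the construction of the isomorphism \eqref{eq description of MmG^CC}''; you have merely carried out that verification explicitly. Your careful bookkeeping of the two antiholomorphic maps $\sigma_G$ and $\sigma_{\U(1)}$ cancelling to give the genuine cocharacter $\sigma_G\cdot\lambda_i$, with the residual $(\sigma_{\U(1)})_*$ absorbed into $\imath_{\U(1)}$, is exactly the content the paper leaves implicit.
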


\begin{proof}
The statements follow easily from the definition of $\dot{\sigma}_G$, the commutativity of \eqref{eq Psi commutes with sigma} and the construction of the isomorphism \eqref{eq description of MmG^CC} worked out in Remark \ref{rm description of MmH^CC}. 
\end{proof}

As a corollary, one has a description of the fixed point set $\Mm^{\red}_X(G^\CC)^{\imath_G}$.

\begin{corollary}
The fixed point set $\Mm^{\red}_X(G^\CC)^{\imath_G}$ is the union of all the projections of closed subsets $(T^*\hat{X} \otimes_\ZZ \Lambda_{C_0})^{i_G \circ \omega}$ given by the fixed points of the automorphisms $i_G \circ \omega$,
\[
\Mm^{\red}_X(G^\CC)^{\imath_G} = \bigcup_{\omega \in Y} p_{0} \left ( (T^*\hat{X} \otimes_\ZZ \Lambda_{C_0})^{i_G \circ \omega} \right ).
\]
\end{corollary}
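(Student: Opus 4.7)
The proof plan rests on two ingredients that are already in place: the commutative diagram \eqref{eq i commutes with xi} from Proposition \ref{pr description of i_G}, and the observation, made just before that proposition, that $i_G$ commutes with the $Y$-action on $T^*\hat{X} \otimes_\ZZ \Lambda_{C_0}$, i.e.\ $i_G \circ \omega = \omega \circ i_G$ for every $\omega \in Y$.

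The inclusion $\supseteq$ is essentially a one-line consequence of \eqref{eq i commutes with xi}: if $x \in (T^*\hat{X} \otimes_\ZZ \Lambda_{C_0})^{i_G \circ \omega}$ for some $\omega$, the diagram gives $\imath_G(p_0(x)) = p_0((i_G \circ \omega)(x)) = p_0(x)$, so $p_0(x)$ is $\imath_G$-fixed. For the reverse inclusion $\subseteq$, I would start with $[x]_Y \in \Mm^{\red}_X(G^\CC)^{\imath_G}$ and specialize \eqref{eq i commutes with xi} to $\omega = \id$ to obtain $p_0(i_G(x)) = \imath_G(p_0(x)) = p_0(x)$. This forces $i_G(x)$ and $x$ into the same $Y$-orbit, so there exists $\omega \in Y$ with $i_G(x) = \omega \cdot x$, equivalently $(\omega^{-1} \circ i_G)(x) = x$. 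Using the commutation $\omega^{-1} \circ i_G = i_G \circ \omega^{-1}$, this rewrites as $(i_G \circ \omega^{-1})(x) = x$, placing $x$ in $(T^*\hat{X} \otimes_\ZZ \Lambda_{C_0})^{i_G \circ \omega^{-1}}$. Then $[x]_Y = p_0(x)$ lies in the claimed union.

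Closedness of each $(T^*\hat{X} \otimes_\ZZ \Lambda_{C_0})^{i_G \circ \omega}$ is automatic as the equalizer of the automorphism $i_G \circ \omega$ and the identity on a separated scheme, so no extra work is required for the word ``closed'' in the statement. The only substantive ingredient is the commutation $i_G \circ \omega = \omega \circ i_G$; if anything counts as an obstacle it is making sure this identity is invoked correctly, but it follows directly from \eqref{eq Y commutes with Psi} and the definition \eqref{eq definition of i_G} of $i_G$, both already established. The rest of the argument is a formal manipulation of fixed points under a quotient by a finite group.
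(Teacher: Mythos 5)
Your argument is correct and is essentially the paper's own: the corollary is stated there without proof as an immediate consequence of Proposition \ref{pr description of i_G}, and your unwinding --- the inclusion $\supseteq$ from diagram \eqref{eq i commutes with xi}, and $\subseteq$ by lifting an $\imath_G$-fixed class to a point $x$ with $i_G(x)$ in the $Y$-orbit of $x$, hence $x$ fixed by some $i_G\circ\omega$ --- is exactly the intended reasoning. One small remark: the strict commutation $i_G\circ\omega=\omega\circ i_G$ is not actually needed at the last step, since the union ranges over all of $Y$; it suffices that $i_G$ normalizes the $Y$-action, so that $\omega^{-1}\circ i_G=i_G\circ\bigl(i_G^{-1}\circ\omega^{-1}\circ i_G\bigr)$ is again of the form $i_G\circ\omega'$ with $\omega'\in Y$.
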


The next step is to study the fixed points of the automorphisms $i_G$ and $i_G \circ \omega$ for every $\omega \in Y$. To do so, first we have to study the involution $\sigma_G : \Lambda_C \to \Lambda_C$ and the automorphisms $\sigma_G \circ \omega: \Lambda_{C_0} \to \Lambda_{C_0}$. Since
\[
(\sigma_G \circ \omega)^2 = \sigma_G \circ \omega \circ \sigma_G \circ \omega = \sigma_G(\omega) \circ \sigma_G^2 \circ \omega = \sigma_G(\omega) \omega,
\]
we observe that $\sigma_G \circ \omega$ is an involution if and only if 
\begin{equation} \label{eq involution condition}
\sigma_G(\omega) \omega = \id.
\end{equation}

Note that the projection $p_0$ preserves the dimension since it is given by a finite quotient. Then, the dimension of each of the components of the fixed locus is
\[
\dim_\CC \left ( p_0 (T^*\hat{X} \otimes_\ZZ \Lambda_{C_0})^{i_G \circ \omega} \right ) = \frac{\dim_\CC(T^*\hat{X} \otimes_\ZZ \Lambda_{C_0})}{\ord(\sigma_G \circ \omega)} = \frac{2 \rk(G^\CC)}{\ord(\sigma_G \circ \omega)}.
\]
When $\sigma_G \circ \omega$ is an involution, $\ord(\sigma_G \circ \omega) = 2$ and therefore the dimension of the fixed locus is $\rk(G^\CC)$. If \eqref{eq involution condition} is not satisfied, the order of the automorphism $\sigma_G \circ \omega$ is greater than $2$ and then, the dimension of the fixed locus is lower than $\rk(G^\CC)$.  We consider the union of all the components with maximal dimension, i.e. those components given by $\sigma_G \circ \omega$ satisfying \eqref{eq involution condition} 
\[
\Mm^{\red}_X(G^\CC)_{max}^{\imath_G} : = \bigcup_{\sigma_G(\omega) \omega = \id} p_{0} \left ( (T^*\hat{X} \otimes_\ZZ \Lambda_{C_0})^{i_G \circ \omega} \right ).
\]

\begin{remark} \label{rm every sigma_G omega is sigma_G in B}
When \eqref{eq involution condition} is satisfied, $\sigma_G \circ \omega$ is an involution of $C_0^\CC$, and by Lemma \ref{lm Cayley transform give all Cartan subalgebras}, the fixed point set $(C_0^\CC)^{\sigma_G \circ \omega}$ is $H^\CC$-conjugate to $C_B$, for some $B \in \Upsilon$, which is the fixed point set of $\sigma_G : C_B^\CC \to C_B^\CC$. 
\end{remark}

One can prove the converse as well.

\begin{lemma} \label{lm every sigma_G in B is sigma_G omega}
For every $B \in \Upsilon$ there exists $\omega \in Y$ satisfying \eqref{eq involution condition}, such that the involution $\sigma_G : C_B^\CC \to C_B^\CC$ is conjugate to the involution $\sigma_G \circ \omega : C_0^\CC \to C_0^\CC$.
\end{lemma}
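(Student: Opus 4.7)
The plan is to invert the construction in Remark \ref{rm every sigma_G omega is sigma_G in B}: starting from $B\in\Upsilon$, use the Cayley transform $\cay_B$ to conjugate the Cartan subgroup $C_B^\CC$ back to the reference Cartan subgroup $C_0^\CC$, and read off the required Weyl element $\omega$ as the ``discrepancy'' between $\sigma_G$ and its $\cay_B$-conjugate. Concretely, since $\cay_B$ is inner on $\gG^\CC$ by definition, write $\cay_B = \Ad(g_B)$ for some $g_B\in G^\CC$ satisfying $g_B C_0^\CC g_B^{-1}=C_B^\CC$, and define
\[
\omega \;:=\; \sigma_G\circ \cay_B^{-1}\circ \sigma_G\circ \cay_B \;=\; \Ad\!\bigl(\sigma_G(g_B)^{-1}\,g_B\bigr).
\]
By construction one then has $\sigma_G\circ\omega=\cay_B^{-1}\circ\sigma_G\circ\cay_B$, so the involution $\sigma_G\circ\omega$ on $C_0^\CC$ is, via the inner automorphism $\cay_B$, conjugate in $G^\CC$ to the involution $\sigma_G$ on $C_B^\CC$, which is exactly the required conjugacy statement.

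The first substantive step is to verify that $\omega$ lies in $Y=W(G^\CC,C_0^\CC)$, i.e.\ that $\sigma_G(g_B)^{-1}g_B$ normalizes $C_0^\CC$. Since $C_0^\CC$ is a Cartan subgroup associated to the $\theta$-stable subalgebra $\cC_0$, it is invariant under $\sigma_G$ (both $\cC_0\cap\hH$ and $\cC_0\cap\mM$ are preserved by $\theta$, and $\sigma_G$ agrees with $\theta$ on $\gG^\CC$ up to the conjugation defining $\gG\subset\gG^\CC$); the same holds for $C_B^\CC$, whose associated Cartan subalgebra $\cC_B$ is again $\theta$-stable. Applying $\sigma_G$ to the identity $g_B C_0^\CC g_B^{-1}=C_B^\CC$ yields $\sigma_G(g_B)\,C_0^\CC\,\sigma_G(g_B)^{-1}=C_B^\CC$, and combining this with $g_B^{-1}C_B^\CC g_B=C_0^\CC$ gives
\[
\bigl(g_B^{-1}\sigma_G(g_B)\bigr)\,C_0^\CC\,\bigl(g_B^{-1}\sigma_G(g_B)\bigr)^{-1}=C_0^\CC,
\]
so $g_B^{-1}\sigma_G(g_B)\in N_{G^\CC}(C_0^\CC)$, and hence $\omega=\Ad(\sigma_G(g_B)^{-1}g_B)$ acts on $C_0^\CC$ as an element of $Y$, as desired.

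The second step is the involution identity $\sigma_G(\omega)\omega=\id$. Recalling that $\sigma_G$ acts on automorphisms of $\gG^\CC$ by conjugation, and using $\sigma_G^2=\id$, one computes
\begin{align*}
\sigma_G(\omega)
&= \sigma_G\circ\bigl(\sigma_G\circ \cay_B^{-1}\circ \sigma_G\circ \cay_B\bigr)\circ\sigma_G
 = \cay_B^{-1}\circ \sigma_G\circ \cay_B\circ \sigma_G,
\end{align*}
so that
\[
\sigma_G(\omega)\,\omega
= \cay_B^{-1}\circ \sigma_G\circ \cay_B\circ \sigma_G^{2}\circ \cay_B^{-1}\circ \sigma_G\circ \cay_B
= \cay_B^{-1}\circ \sigma_G^{2}\circ \cay_B = \id,
\]
which is condition \eqref{eq involution condition}. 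Together with the first step and the conjugacy observation noted above, this proves the lemma.

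I expect the main obstacle to be the rigorous justification that $\omega$ really lies in the Weyl group $Y$ rather than in some larger automorphism group: one has to be careful that the element $g_B$ implementing $\cay_B$ actually belongs to $G^\CC$ (it does, since $\cay_B$ is a product of $\Ad$'s of exponentials of elements of $\gG^\CC$), and that the two Cartan subgroups involved are both $\sigma_G$-stable. All the remaining work is a direct computation inside $G^\CC$ once $\omega$ is defined.
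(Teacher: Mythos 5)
Your proposal is correct and follows essentially the same route as the paper: conjugate $\sigma_G|_{C_B^\CC}$ back to $C_0^\CC$ by the element $g$ implementing the conjugacy of the two Cartan subgroups, identify the discrepancy $\Ad(\sigma_G(g)^{-1}g)$ as an element of $N_{G^\CC}(C_0^\CC)$ (hence of $Y$) using $\sigma_G$-stability of both Cartan subgroups, and verify $\sigma_G(\omega)\omega=\id$ by direct computation. The only cosmetic difference is that you take $g=g_B$ to be the specific element realizing the Cayley transform $\cay_B$, whereas the paper uses an arbitrary conjugating element.
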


\begin{proof}
Since $C_B^\CC$ and $C_0^\CC$ are conjugate by some element $g \in G^\CC$, one has that $\sigma_G : C_B^\CC \to C_B^\CC$ is conjugate to $\ad_g \circ \sigma_G \ad_{g}^{-1} : C_0^\CC \to C_0^\CC$. Note that
\[
\ad_g \circ \sigma_G \ad_{g}^{-1} = \sigma_G \circ \ad_{\sigma_G(g)} \circ \ad_{g}^{-1} = \sigma_G \circ \ad_{\sigma_G(g)g^{-1}}. 
\]
By hypothesis, $\sigma_G$ preserves $C_0^\CC$, so $\sigma_G(g)g^{-1}$ belongs to the normalizer $N_{G^\CC}(C_0^\CC)$ and therefore it defines an element of the Weyl group $Y = W(G^\CC, C_0^\CC)$. Furthermore,
\[
\sigma_G \left ( \sigma_G(g)g^{-1} \right ) \cdot \left (  \sigma_G(g)g^{-1} \right ) = \sigma_G^2(g) \sigma_G(g^{-1})\sigma_G(g) g^{-1} = g g^{-1} = \id,
\]
and \eqref{eq involution condition} is satisfied.
\end{proof}

Denote the cocharacter lattice of $C_B^\CC$ by $\Lambda_B := \Hom(\CC^*, \C_B^\CC)$ and construct the families of $G^\CC$-Higgs bundles $\Hh_{0} \to T^*\hat{X} \otimes_\ZZ \Lambda_{C_0}$ and $\Hh_{B} \to T^*\hat{X} \otimes_\ZZ \Lambda_{C_B}$ as we did in \eqref{eq definition of Hh} and Remark \ref{rm description of MmH^CC}. Denote by $q_0$, or $q_B$, the corresponding morphism from the parametrizing space of the family to the reduced moduli space $\Mm^{\red}_X(G^\CC)$.

\begin{lemma} \label{lm is enougth to study T*X otimes Lambda_B}
Take $\omega \in Y$ satisfying \eqref{eq involution condition}. Then we have the identification
\[
q_0 \left ( (T^*\hat{X} \otimes_\ZZ \Lambda_{C_0})^{i_G \circ \omega} \right ) = q_B \left ( (T^*\hat{X} \otimes_\ZZ \Lambda_{C_B})^{i_G} \right ).
\]
\end{lemma}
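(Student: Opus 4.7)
The plan is to construct, for each $\omega \in Y$ with $\sigma_G(\omega)\omega = \id$, an explicit isomorphism between the two fixed-point loci that descends to the identity on the moduli space. Recall from the proof of Lemma \ref{lm every sigma_G in B is sigma_G omega} that there exists $g \in G^\CC$ with $\Ad_g(C_0^\CC) = C_B^\CC$ and
\[
\Ad_g \circ (\sigma_G \circ \omega) \circ \Ad_g^{-1} = \sigma_G \qquad \text{on } C_B^\CC,
\]
the element $\omega \in Y$ being precisely the class of $\sigma_G(g)g^{-1}$. So the first step is to use $\Ad_g$ to pass from the pair $(C_0^\CC, \sigma_G \circ \omega)$ to the pair $(C_B^\CC, \sigma_G)$.

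First I would observe that $\Ad_g$ restricts to a lattice isomorphism $\Lambda_{C_0} \stackrel{\cong}{\to} \Lambda_{C_B}$, and tensoring with $T^*\hat{X}$ yields an isomorphism
\[
\Psi_g : T^*\hat{X} \otimes_\ZZ \Lambda_{C_0} \stackrel{\cong}{\lra} T^*\hat{X} \otimes_\ZZ \Lambda_{C_B}.
\]
Using the explicit formula \eqref{eq definition of i_G} for $i_G$ (which acts as $\imath_{\U(1)}$ on the $T^*\hat{X}$-factor and as $\sigma_G$ on the lattice factor), together with the intertwining relation above, a direct verification shows $\Psi_g \circ (i_G \circ \omega) = i_G \circ \Psi_g$. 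Hence $\Psi_g$ restricts to a bijection
\[
(T^*\hat{X} \otimes_\ZZ \Lambda_{C_0})^{i_G \circ \omega} \stackrel{\cong}{\lra} (T^*\hat{X} \otimes_\ZZ \Lambda_{C_B})^{i_G}.
\]

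Next I would verify the compatibility $q_0 = q_B \circ \Psi_g$ on the whole of $T^*\hat{X} \otimes_\ZZ \Lambda_{C_0}$. Both $\Hh_0$ and $\Hh_B$ are built by taking the Poincar\'e-type torus families $\Pp_{C_0}$ and $\Pp_{C_B}$ from \eqref{eq definition of Pp_T}, extending the structure group to $G^\CC$ and adding the Higgs field encoded by the $\aA$-factor. Under the isomorphism $\Ad_g : C_0^\CC \to C_B^\CC$, the $C_0^\CC$-Higgs bundle parametrized by $(t,z)$ goes to the $C_B^\CC$-Higgs bundle parametrized by $\Psi_g(t,z)$, and since $\Ad_g$ is an inner automorphism of $G^\CC$ the two resulting $G^\CC$-Higgs bundles are isomorphic. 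Thus they represent the same point of $\Mm^{\red}_X(G^\CC)$, i.e.\ $q_0 = q_B \circ \Psi_g$. Combining this with the bijection of fixed loci in the previous step gives
\[
q_0\left((T^*\hat{X} \otimes_\ZZ \Lambda_{C_0})^{i_G \circ \omega}\right) = q_B\left((T^*\hat{X} \otimes_\ZZ \Lambda_{C_B})^{i_G}\right),
\]
as required.

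The main obstacle I anticipate is the bookkeeping in the compatibility $q_0 = q_B \circ \Psi_g$: one must check that conjugation by $g$ really does identify the two constructions at the level of Higgs pairs, not just at the level of underlying bundles, and that this identification is compatible with the isomorphism \eqref{eq definition of Psi} used implicitly in the definitions of $\Hh_0$ and $\Hh_B$. Once this naturality is set up, everything else is a direct translation via $\Psi_g$.
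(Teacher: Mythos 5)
Your proposal is correct and follows essentially the same route as the paper, whose proof is a one-line appeal to Remark \ref{rm every sigma_G omega is sigma_G in B}, Lemma \ref{lm every sigma_G in B is sigma_G omega}, and the fact that conjugation by $g\in G^\CC$ yields isomorphic $G^\CC$-Higgs bundles; you have simply made the conjugating isomorphism $\Psi_g$ and the intertwining $\Psi_g\circ(i_G\circ\omega)=i_G\circ\Psi_g$ explicit. The only minor point is that the existence of $g$ for a \emph{given} $\omega$ is really the content of Remark \ref{rm every sigma_G omega is sigma_G in B} rather than of Lemma \ref{lm every sigma_G in B is sigma_G omega} (which goes in the opposite direction, from $B$ to $\omega$), but this does not affect the argument.
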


\begin{proof}
Since $i_G$ is induced by $\sigma_G$, the lemma follows from Remark \ref{rm every sigma_G omega is sigma_G in B}, Lemma \ref{lm every sigma_G in B is sigma_G omega}, and the fact that conjugation gives an isomorphism of $G^\CC$-Higgs bundles.
\end{proof}

Thus, to study $\Mm_X(G^\CC)^{\imath_G}_{max}$ we can reduce ourselves to the study of $i_G$ acting on $T^*\hat{X} \otimes_\ZZ \Lambda_{C_B}$.

\begin{corollary} \label{co decription of Mm_max^imath}
\[
\Mm^{\red}_X(G^\CC)_{max}^{\imath_G} = \bigcup_{B \in \Upsilon} q_B \left ( (T^*\hat{X} \otimes_\ZZ \Lambda_{C_B})^{i_G} \right ).
\]
\end{corollary}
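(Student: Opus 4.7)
The proof is essentially a reorganization of the union, performed by invoking Lemma~\ref{lm is enougth to study T*X otimes Lambda_B} in both directions, so I expect no real obstacle beyond bookkeeping.

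First I would note that the morphism $p_0$ appearing in the definition of $\Mm^{\red}_X(G^\CC)^{\imath_G}_{max}$ coincides with $q_0$: both are the natural projection from $T^*\hat{X} \otimes_\ZZ \Lambda_{C_0}$ to $\Mm^{\red}_X(G^\CC)$, one arising from the quotient description \eqref{eq description of MmG^CC} and the other induced (via moduli theory) by the tautological family $\Hh_0$. With this identification in hand, I would unwind the definition
\[
\Mm^{\red}_X(G^\CC)_{max}^{\imath_G} = \bigcup_{\sigma_G(\omega)\omega=\id} p_0\left((T^*\hat{X} \otimes_\ZZ \Lambda_{C_0})^{i_G \circ \omega}\right).
\]

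For the inclusion $\subseteq$, I would take an arbitrary $\omega \in Y$ with $\sigma_G(\omega)\omega = \id$. By Remark~\ref{rm every sigma_G omega is sigma_G in B}, the fixed-point set $(C_0^\CC)^{\sigma_G \circ \omega}$ is $H^\CC$-conjugate to $C_B$ for some $B \in \Upsilon$, and Lemma~\ref{lm is enougth to study T*X otimes Lambda_B} then gives
\[
q_0\left((T^*\hat{X} \otimes_\ZZ \Lambda_{C_0})^{i_G \circ \omega}\right) = q_B\left((T^*\hat{X} \otimes_\ZZ \Lambda_{C_B})^{i_G}\right),
\]
so the corresponding term on the left lies inside the union on the right.

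For the reverse inclusion $\supseteq$, I would fix an arbitrary $B \in \Upsilon$ and apply Lemma~\ref{lm every sigma_G in B is sigma_G omega} to produce an element $\omega \in Y$ satisfying \eqref{eq involution condition} for which $\sigma_G: C_B^\CC \to C_B^\CC$ is $G^\CC$-conjugate to $\sigma_G \circ \omega : C_0^\CC \to C_0^\CC$. A second application of Lemma~\ref{lm is enougth to study T*X otimes Lambda_B} then yields the same equality of images as above, exhibiting $q_B\left((T^*\hat{X} \otimes_\ZZ \Lambda_{C_B})^{i_G}\right)$ as one of the pieces in the defining union of $\Mm^{\red}_X(G^\CC)^{\imath_G}_{max}$. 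Combining both inclusions gives the claimed equality. The only technical check worth double-checking is that conjugation by $G^\CC$ commutes with the passage from Cartan-level data to moduli, which is precisely the content already encoded in Lemma~\ref{lm is enougth to study T*X otimes Lambda_B}.
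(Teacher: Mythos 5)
Your proof is correct and is exactly the argument the paper intends: the corollary is stated without proof as an immediate consequence of Remark \ref{rm every sigma_G omega is sigma_G in B}, Lemma \ref{lm every sigma_G in B is sigma_G omega} and Lemma \ref{lm is enougth to study T*X otimes Lambda_B}, and your two-directional unwinding (including the observation that $p_0$ and $q_0$ coincide) is precisely the bookkeeping the authors leave implicit.
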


Recall that $\cC_B^\CC \cong \CC \otimes_\ZZ \Lambda_{C_B}$ and $\tT_B^\CC \cong \CC \otimes_\ZZ \Lambda_{T_B}$, for the sublattice $\Lambda_{T_B} \subset \Lambda_{C_B}$. Consider also 
\[
\Lambda_{\aA_B} := \Hom(\CC^*, \exp(\aA_B^\CC)), 
\]
and note that $\aA_B^\CC \cong \CC \otimes_\ZZ \Lambda_{\aA_B}$. It follows that $\Lambda_{T_B} \oplus \Lambda_{\aA_B}$ has the same rank as $\Lambda_{C_B}$, although it might be a proper sublattice of $\Lambda_{C_B}$. 

The involution $\sigma_G$ leaves $\Lambda_{T_B}$ invariant, 
\begin{equation} \label{eq sigma_G to Lambda_T}
\sigma_G |_{\Lambda_{T_B}} = \id_{\Lambda_{T_B}},
\end{equation}
while $\sigma_G$ inverts $\Lambda_{\aA_B}$,
\begin{equation} \label{eq sigma_G to Lambda_aA}
\sigma_G |_{\Lambda_{\aA_B}} = - \id_{\Lambda_{\aA_B}}.
\end{equation}

We now study the \'etale morphism \eqref{eq etale morphism j}. Recall from Theorem \ref{tm Mm cong olXi over W} that $\Mm_X(G)$ is described in terms of $\Xi'_G$, defined in \eqref{eq definition of Xi_G0} as the union of the irreducible components $(\hat{X} \otimes_\ZZ \Lambda_T)|_B \times \aA_B^\CC$.

\begin{lemma} \label{lm irreducible component of T*X otimes Lambda^imath}
$(\hat{X} \otimes_\ZZ \Lambda_T)|_B \times \aA_B^\CC$ is an irreducible component of $(T^*\hat{X} \otimes_\ZZ \Lambda_{C_B})^{i_G}$.
\end{lemma}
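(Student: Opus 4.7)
The plan is to use the product structure $T^{*}\hat{X}=\hat{X}\times\CC$ to split the problem. Under this, we get a natural decomposition
\[
T^{*}\hat{X}\otimes_{\ZZ}\Lambda_{C_{B}}\cong (\hat{X}\otimes_{\ZZ}\Lambda_{C_{B}})\times(\CC\otimes_{\ZZ}\Lambda_{C_{B}}),
\]
and, from the definition of $i_{G}$ in \eqref{eq definition of i_G} together with $\imath_{\U(1)}(L,\phi)=(L,-\phi)$, the action of $i_{G}$ is $\id\otimes\sigma_{G}$ on the first factor and $\id\otimes(-\sigma_{G})$ on the second. Hence the fixed locus factors as
\[
(T^{*}\hat{X}\otimes_{\ZZ}\Lambda_{C_{B}})^{i_{G}}=(\hat{X}\otimes_{\ZZ}\Lambda_{C_{B}})^{\sigma_{G}}\times(\CC\otimes_{\ZZ}\Lambda_{C_{B}})^{-\sigma_{G}}.
\]
It thus suffices to identify each factor and check they have the right shape and dimension.

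For the vector-space factor, \eqref{eq sigma_G to Lambda_T}--\eqref{eq sigma_G to Lambda_aA} say that $\sigma_{G}$ acts as $+\id$ on $\tT_{B}^{\CC}$ and as $-\id$ on $\aA_{B}^{\CC}$, and these subspaces together span $\cC_{B}^{\CC}\cong\CC\otimes_{\ZZ}\Lambda_{C_{B}}$. Therefore the $(-\sigma_{G})$-fixed subspace is exactly $\aA_{B}^{\CC}$, giving the second factor.

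For the abelian-variety factor, Remark \ref{rm description of X otimes Lambda_B} identifies $(\hat{X}\otimes_{\ZZ}\Lambda_{T})|_{B}$ with the abelian subvariety $\hat{X}\otimes_{\ZZ}\Lambda_{T_{B}}\subset\hat{X}\otimes_{\ZZ}\Lambda_{C_{B}}$. Since $\sigma_{G}$ is the identity on $\Lambda_{T_{B}}$ by \eqref{eq sigma_G to Lambda_T}, this abelian subvariety is contained in $(\hat{X}\otimes_{\ZZ}\Lambda_{C_{B}})^{\sigma_{G}}$. To promote "contained in" to "is an irreducible component" one appeals to the standard fact that the fixed locus of an involution on an abelian variety is a disjoint union of translates of a fixed abelian subvariety, whose dimension equals the complex rank of the $+1$-eigenspace of the induced involution on the tangent space at the origin. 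Here that tangent space is $\cC_{B}^{\CC}$ with $+1$-eigenspace $\tT_{B}^{\CC}$, so every component of $(\hat{X}\otimes_{\ZZ}\Lambda_{C_{B}})^{\sigma_{G}}$ has complex dimension $\dim_{\RR}\tT_{B}=\rk_{\ZZ}\Lambda_{T_{B}}=\dim\bigl(\hat{X}\otimes_{\ZZ}\Lambda_{T_{B}}\bigr)$. Since $\hat{X}\otimes_{\ZZ}\Lambda_{T_{B}}$ is irreducible and has the maximal possible dimension, it is an irreducible component, and combining with the cotangent direction yields the lemma.

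The only genuinely non-formal step is the dimension count for the compact factor, i.e.\ that any component of the $\sigma_{G}$-fixed locus in the abelian variety $\hat{X}\otimes_{\ZZ}\Lambda_{C_{B}}$ has dimension $\dim_{\RR}\tT_{B}$; everything else is a direct unpacking of the definitions of $i_{G}$, $\aA_{B}^{\CC}$ and $(\hat{X}\otimes_{\ZZ}\Lambda_{T})|_{B}$.
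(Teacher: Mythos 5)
Your proof is correct, and it follows the same essential computation as the paper's: both arguments come down to the eigenspace decomposition of $\sigma_G$ on $\cC_B^\CC \cong \CC \otimes_\ZZ \Lambda_{C_B}$, namely $+1$ on $\tT_B^\CC$ and $-1$ on $\aA_B^\CC$ (equations \eqref{eq sigma_G to Lambda_T}--\eqref{eq sigma_G to Lambda_aA}), combined with the extra sign $\imath_{\U(1)}$ contributes on the cotangent direction. The organization differs: the paper works with the finite-index sublattice $\Lambda_{T_B} \oplus \Lambda_{\aA_B} \subset \Lambda_{C_B}$ and checks directly that each summand of the identification \eqref{eq fixed locus of i_0} is pointwise fixed by $i_G$ (using $(-1)\cdot(L,\phi) = (L^*,-\phi)$ to absorb the sign on the $\Lambda_{\aA_B}$ part), whereas you split $T^*\hat{X} = \hat{X} \times \CC$ globally so that the fixed locus factors as $(\hat{X}\otimes_\ZZ\Lambda_{C_B})^{\sigma_G} \times (\CC\otimes_\ZZ\Lambda_{C_B})^{-\sigma_G}$ and each factor can be handled separately. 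The more substantive difference is that the paper's proof stops at showing the subset is \emph{contained} in the fixed locus, leaving the maximality implicit, while you complete the statement as literally claimed: you identify the vector-space factor exactly as $\aA_B^\CC$, and for the compact factor you invoke the structure of the fixed subgroup of a group involution on an abelian variety to see that every component of $(\hat{X}\otimes_\ZZ\Lambda_{C_B})^{\sigma_G}$ has dimension $\dim_\CC \tT_B^\CC$, so that the irreducible subvariety $\hat{X}\otimes_\ZZ\Lambda_{T_B}$ of that same dimension is forced to be a component. That dimension count is exactly the step one would want to add to the paper's argument, so your version is, if anything, the more complete of the two.
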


\begin{proof}
Recalling that $\aA_B^\CC \cong \CC \otimes_\ZZ \Lambda_{\aA_B}$ and $(\hat{X} \otimes_\ZZ \Lambda_T)|_B \cong (\hat{X} \otimes_\ZZ \Lambda_{T_B})$, take the natural identification
\begin{equation} \label{eq fixed locus of i_0}
\left (\hat{X} \otimes_\ZZ \Lambda_T \right) |_B \times \aA_B^\CC \cong \left ( \left (\hat{X} \times \{ 0 \} \right ) \otimes_\ZZ \Lambda_{T_B} \right) \oplus \left ( \left ( \{ \Oo_X \} \times \CC \right )  \otimes_\ZZ \Lambda_{\aA_B} \right ).
\end{equation}
By \eqref{eq sigma_G to Lambda_T}, \eqref{eq sigma_G to Lambda_aA} and the definition of $i_G$ given in \eqref{eq definition of i_G}, one has that $i_G$ restricted to $T^*\hat{X} \otimes_\ZZ ( \Lambda_{T_B} \oplus \Lambda_{\aA_B})$ is
\[
\morph{i_G}{T^*\hat{X} \otimes_\ZZ ( \Lambda_{T_B} \oplus \Lambda_{\aA_B})}{T^*\hat{X} \otimes_\ZZ ( \Lambda_{T_B} \oplus \Lambda_{\aA_B})}{\sum (L_i, \phi_i) \otimes_\ZZ (\lambda_i \oplus \lambda_i')}{\sum (L_i, -\phi_i) \otimes_\ZZ (\lambda_i \oplus -\lambda_i').}{}{}
\]
Note that
\[
i_G \left ( \sum (L_i, \phi_i) \otimes_\ZZ (\lambda_i \oplus 0) \right ) = \sum (L_i, - \phi_i) \otimes_\ZZ (\lambda_i \oplus 0)
\]
and
\begin{align*}
i_G \left ( \sum (L_i, \phi_i) \otimes_\ZZ (0 \oplus \lambda'_i) \right ) = & \sum (L_i, - \phi_i) \otimes_\ZZ (0 \oplus - \lambda'_i) 
\\
= & \sum (L_i^*, \phi_i) \otimes_\ZZ (0 \oplus \lambda'_i).
\end{align*}
Thus, we see that \eqref{eq fixed locus of i_0} is fixed by $i_G$.
\end{proof}

Since $(\hat{X} \otimes_\ZZ \Lambda_T)|_B \times \aA_B^\CC$ is irreducible, $q_B(\hat{X} \otimes_\ZZ \Lambda_T)|_B \times \aA_B^\CC) \subset \Mm_X(G^\CC)$ is irreducible as well. By Corollary \ref{co decription of Mm_max^imath} and Lemma \ref{lm irreducible component of T*X otimes Lambda^imath}, the image of the \'etale map $j$ from \eqref{eq etale morphism j} is
\[
\xymatrix{
j \left ( \Mm^{\red}_X(G) \right ) = \bigcup_{B \in \Upsilon} q_B \left (\hat{X} \otimes_\ZZ \Lambda_T)|_B \times \aA_B^\CC \right ) \ar@{^{(}->}[d]
\\
\Mm^{\red}_X(G^\CC)_{max}^{\imath_G} = \bigcup_{B \in \Upsilon} q_B \left ( (T^*\hat{X} \otimes_\ZZ \Lambda_{C_B})^{i_G} \right ).
}
\]

Recall from \eqref{eq definition of Y_B} that $Y_B$ is the Weyl group associated to $C_B^\CC$ and note that the centralizer of $(\hat{X} \otimes_\ZZ \Lambda_T)|_B \times \aA_B^\CC$ in $Y_B$ coincides with $N_{Y_B}(\aA_B^\CC)$. Then, the projection of $(\hat{X} \otimes_\ZZ \Lambda_T)|_B \times \aA_B^\CC$ to $\Mm_X(G^\CC)^{\imath_G}_0$ is
\[
q_B \left ( (\hat{X} \otimes_\ZZ \Lambda_T)|_B \times \aA_B^\CC \right ) \cong \quotient{(\hat{X} \otimes_\ZZ \Lambda_T)|_B \times \aA_B^\CC}{N_{Y_B}(\aA_B^\CC)}.
\]
Recall from Remark \ref{rm irreducible components of Mm} that $\Mm_X(G)$ decomposes as the union of irreducible components $p_B \left ( (\hat{X} \otimes_\ZZ \Lambda_T)|_B \times \aA_B^\CC \right )$, where each of these components is described in \eqref{eq description of the irreducible components of Mm} as
\[
p_B \left ( (\hat{X} \otimes_\ZZ \Lambda_T)|_B \times \aA_B^\CC \right ) \cong \quotient{(\hat{X} \otimes_\ZZ \Lambda_T)|_B \times \aA_B^\CC}{\Gamma_B \rtimes Z_W(B).}
\]
Recall from Lemma \ref{lm W_B cong Gamma_B rtimes Z_W B} that $\Gamma_B \rtimes Z_W(B)$ can be identified as a subgroup of $N_{Y_B}(\aA_B^\CC)$.

Denote by $j_B$ the restriction of the \'etale morphism \eqref{eq etale morphism j} to the corresponding irreducible component. We study $j$ by describing each of the restrictions $j_B$.  

\begin{proposition} \label{pr description of embedding}
Denote by $\pi$ the natural projection induced by the identification of $\Gamma_B \rtimes Z_W(B)$ as a subgroup of $N_{Y_B}(\aA_B^\CC)$. One has the following commutative diagram
\[
\xymatrix{
\quotient{\left ( (\hat{X} \otimes_\ZZ \Lambda_T)|_B \times \aA_B^\CC \right )}{\Gamma_B \rtimes Z_W(B)} \ar@{->>}[r]^{\pi} \ar[d]^{\cong} & \quotient{\left ( (\hat{X} \otimes_\ZZ \Lambda_T)|_B \times \aA_B^\CC \right )}{N_{Y_B}(\aA_B^\CC)} \ar[d]^{\cong}
\\
p_B \left ( (\hat{X} \otimes_\ZZ \Lambda_T)|_B \times \aA_B^\CC \right ) \ar[r]^{j_B} \ar@{^{(}->}[d] & q_B \left ( (\hat{X} \otimes_\ZZ \Lambda_T)|_B \times \aA_B^\CC \right ) \ar@{^{(}->}[d]
\\
\Mm^{\red}_X(G) \ar[r]^j & \Mm^{\red}_X(G^\CC)^{\imath_G}.
}
\]
\end{proposition}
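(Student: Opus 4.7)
The plan is to reduce the diagram to the single identity $j \circ p_B = q_B$, which holds because both $p_B$ and $q_B$ are induced by the \emph{same} family of Higgs bundles: the restriction of $\Hh$ in~\eqref{eq definition of Hh} to $(\hat{X} \otimes_\ZZ \Lambda_T)|_B \times \aA_B^\CC$, viewed respectively as a family of $G$-Higgs bundles and, after extension of structure group along $H^\CC \hookrightarrow G^\CC$, as a family of $G^\CC$-Higgs bundles with natural $C_B^\CC$-reduction (cf.\ Remark~\ref{rm Pp_T reduces to T_B}).

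First, I observe that the two vertical isomorphisms in the diagram are already at our disposal: the left column is~\eqref{eq description of the irreducible components of Mm} from Remark~\ref{rm irreducible components of Mm}, and the right column is the display immediately preceding the proposition. The bottom square commutes by construction, since $j_B$ is defined as the restriction of $j$ to $p_B((\hat{X} \otimes_\ZZ \Lambda_T)|_B \times \aA_B^\CC)$. For the top square, I would appeal to the functoriality of extension of structure group together with the definition of $j$ in~\eqref{eq etale morphism j}, yielding
\begin{equation*}
j \circ p_B = q_B
\end{equation*}
as morphisms out of $(\hat{X} \otimes_\ZZ \Lambda_T)|_B \times \aA_B^\CC$. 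Since $p_B$ factors through the quotient by $\Gamma_B \rtimes Z_W(B)$ and $q_B$ through the quotient by $N_{Y_B}(\aA_B^\CC)$, this identity descends, under the two vertical isomorphisms, to the assertion that $j_B$ coincides with the canonical projection $\pi$ induced by the subgroup inclusion of Lemma~\ref{lm W_B cong Gamma_B rtimes Z_W B}.

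The hardest point, which deserves care, is justifying that the right-hand vertical isomorphism holds with precisely the group $N_{Y_B}(\aA_B^\CC)$ and no larger. For this I would apply Proposition~\ref{pr normalized transition functons} in the $G^\CC$ setting: two $C_B^\CC$-reductions yield isomorphic $G^\CC$-bundles iff they are related by an element of $Y_B = W(G^\CC, C_B^\CC)$, and the further requirement that the Higgs field, which by construction lies in $\aA_B^\CC$, is preserved as a set cuts this down to the normalizer $N_{Y_B}(\aA_B^\CC)$; elements of $Y_B \setminus N_{Y_B}(\aA_B^\CC)$ would move the Higgs field out of $\aA_B^\CC$ and hence cannot identify points of the parametrizing space inside the given component. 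Once this refinement is granted, the commutativity of both squares is forced by $j \circ p_B = q_B$.
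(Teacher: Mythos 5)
Your proposal is correct and follows essentially the same route as the paper: the two vertical isomorphisms are the ones already established (the left from Remark \ref{rm irreducible components of Mm}, the right from the discussion immediately preceding the proposition, where the stabilizer of the component in $Y_B$ is identified with $N_{Y_B}(\aA_B^\CC)$), and the commutativity of the diagram reduces to the identity $j \circ p_B = q_B$, which holds because $j$ is induced by extension of structure group along $H^\CC \hookrightarrow G^\CC$ and $\mM^\CC \subset \gG^\CC$ applied to the same family. Your extra care in pinning down the group $N_{Y_B}(\aA_B^\CC)$ on the right-hand side is a more explicit rendering of what the paper asserts just before the statement, not a genuinely different argument.
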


\begin{proof}
After the identifications that we have previously studied, the proof follows from the observation that $j$ is determined by the extension of structure groups given by the natural inclusions $H^\CC \subset G^\CC$ and $\mM^\CC \subset \gG^\CC$.
\end{proof}

\section{The Hitchin fibration}
\label{sc the hitchin fibration}

Let us consider the isotropy action of the complex reductive Lie group $H^\CC$ on $\mM^\CC$ and take the quotient map $\mM^\CC \to \git{\mM^\CC}{H^\CC}$. Let $E$ be any algebraic $H^\CC$-bundle. Since the isotropy action of $H^\CC$ on $\mM^\CC \Slash H^\CC$ is obviously trivial, we note that the fibre bundle induced by $E$ is trivial, $E(\mM^\CC \Slash H^\CC) = (\mM^\CC \Slash H^\CC) \otimes \Oo_X$, and so the projection induces a surjective morphism of fibre bundles
\[
E(\mM^\CC) \lra E(\mM^\CC \Slash H^\CC)
\]
and a morphism on the set of global sections
\[
\map{H^0(X,E(\mM^\CC))}{H^0(X,(\mM^\CC \Slash H^\CC) \otimes \Oo_X)}{\Phi}{\Phi \Slash H^\CC.}{}
\]
One can easily check that the map constructed above is constant along S-equivalence classes. This allows us to define the Hitchin map
\begin{equation}
\label{eq definition of b}
\morph{b_G}{\Mm_X(G)}{B_G := H^0(X,(\mM^\CC \Slash H^\CC) \otimes \Oo_X)}{(E,\Phi)}{\Phi \Slash H^\CC.}{}
\end{equation}

Let $D$ be a maximal admissible root system in $\Upsilon$, and let $\cC_0 = \tT \oplus \aA$ be the maximally non-compact Cartan subalgebra of $\gG$ associated to it. Then, $\aA^\CC$ is a maximal (abelian) subalgebra of $\mM^\CC$. Denote by $W_{sm}\left(H^\CC, \aA_{D}^\CC \right)$ the corres\-ponding small Weyl group. Recall again the Real Chevalley Theorem (\cite[Theorem 6.57]{knapp}, for instance), that states
\[
\raisebox{.2em}{\thinspace $\mM^\CC$} /\!\!/ \raisebox{-.2em}{$H^\CC$} \cong \quotient{\aA_{D}^\CC}{W_{sm} \left(H^\CC, \aA^\CC_{D} \right).}
\]
By \cite[Theorem 3]{konstant}, the small Weyl group is generated by the normalizer of $\aA_{D}^\CC$ in the Weyl group $Y$, and thanks to Lemma \ref{lm W_B cong Gamma_B rtimes Z_W B} one can identify $\aA^\CC / W_{sm}\left(H^\CC, \aA_{D}^\CC \right)$ with $\aA^\CC / \Gamma \rtimes Z_W(D)$. Since $X$ is a projective variety, we have 
\[
B_G = H^0(X,(\mM^\CC \Slash H^\CC) \otimes \Oo_X) \cong \quotient{\aA^\CC}{\Gamma \rtimes Z_W(D)}.  
\]

Recalling that $\Xi'_{G} \subset (\hat{X} \otimes_\ZZ \Lambda_T) \times \mM^\CC$, we consider the natural projection
\[
\morph{\pi_G}{\Xi'_{G}}{\bigcup_{B \in \Upsilon}\aA_B^\CC}{(t,z)}{z,}{}
\]
where $t \in \hat{X} \otimes_\ZZ \Lambda_T$ and $z \in \mM^\CC$. Due to Lemma \ref{lm Cayley transform give all Cartan subalgebras}, all the maximal admissible root systems $D_1, \dots, D_\ell$ are conjugate under $W$. Therefore, one has
\[
\quotient{\aA^\CC}{\Gamma \rtimes Z_W(D)} \cong \quotient{\left( \bigcup_{B \in \Upsilon}(\aA_B^\CC / \Gamma_B) \right)}{W,}
\]
where we recall that every $B$ is contained in some maximal $D$. One obtains the projection
\[
\beta_G : \left( \bigcup_{B \in \Upsilon}\aA_B^\CC \right) \, \lra \, \quotient{\aA^\CC}{\Gamma \rtimes Z_W(D)}.
\]

It is clear that the following diagram
\begin{equation}
\label{eq diagram Hitchin fibration}
\xymatrix{
\Xi'_{G}  \ar[rr]^{\pi_{G} \qquad} \ar[d]_{p_{G}} & & \bigcup_{B \in \Upsilon}\aA_B^\CC \ar[d]^{\beta_{G}}
\\
\Mm^{\red}_X(G)   \ar[rr]^{b_{G} \qquad} & & \quotient{\aA^\CC}{\Gamma \rtimes Z_W(D)}.
}
\end{equation}
is commutative.

Given $z \in \aA^\CC$, we define
\[
\Upsilon_z := \{ B \in \Upsilon \nr{ such that } z \in \aA_B^\CC  \}.
\]
We say that the admissible system $F$ is {\it minimal} in $\Upsilon_z$ if it does not contain any other admissible system of $\Upsilon_z$. Let $\Upsilon_z^{min}$ denote the set of minimal elements. Recall that, when $F \subset B$, one has $(\hat{X} \otimes \Lambda_T)|_B \subset (\hat{X} \otimes \Lambda_T)|_F$. Therefore, by \eqref{eq definition of X otimes Lambda restricted to B}, one has that
\begin{equation} \label{eq description of pi^-1}
\pi_G^{-1}(z) = \bigcup_{F_i \in \Upsilon_z^{min}} (\hat{X} \otimes \Lambda_T)|_{F_i}  \times \{ z \}.
\end{equation}

We can now describe explicitly the fibres of the Hitchin map restricted to $\Mm^{\red}_X(G)$.

\begin{lemma}
\label{lm form of the Hitchin fibre}
Let $z \in \aA^\CC$ and take $F \in \Upsilon_z^{min}$. Then, the Hitchin fibre in $\Mm^{\red}_X(G)$ over $z$ is
\[
b_{G}^{-1}(\beta_G(z)) \cong \quotient{(\hat{X} \otimes_\ZZ \Lambda_{T})|_{F}}{Z_{W}(F, z)}.
\]
\end{lemma}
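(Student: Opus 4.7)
The plan is to exploit the commutative diagram \eqref{eq diagram Hitchin fibration} together with the isomorphism $\Mm^{\red}_X(G) \cong \Xi_G / W$ of Theorem \ref{tm Mm cong olXi over W}. Since $p_G$ factors as $\Xi'_G \twoheadrightarrow \Xi_G \twoheadrightarrow \Xi_G/W$, letting $\tilde{\pi}_G: \Xi_G \to \bigcup_B \aA_B^\CC/\Gamma_B$ denote the descent of $\pi_G$, one obtains
\[
b_G^{-1}(\beta_G(z)) \;\cong\; \tilde{\pi}_G^{-1}\bigl(\beta_G^{-1}(\beta_G(z))\bigr) \,/\, W.
\]
The question reduces to describing the $W$-orbits inside this preimage.

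The next step is to prove that every such orbit admits a representative of the form $(t, [z]_{\Gamma_F})$ with $t \in (\hat{X} \otimes_\ZZ \Lambda_T)|_F$. Take $(t', [z']_{\Gamma_B}) \in \Xi_G$ mapping to $\beta_G(z)$ in $B_G \cong (\bigcup_B \aA_B^\CC/\Gamma_B)/W$. Then $[z']_{\Gamma_B}$ and $[z]_{\Gamma_F}$ are $W$-conjugate, so there is $\omega \in W$ with $\omega \cdot [z']_{\Gamma_B} = [z]_{\Gamma_F}$. This forces $\omega \cdot \aA_B^\CC = \aA_F^\CC$ (hence $\omega \cdot B = F$) and $\omega \cdot z' \in \Gamma_F \cdot z$, and the first coordinate satisfies $\omega \cdot t' \in (\hat{X} \otimes_\ZZ \Lambda_T)|_{\omega \cdot B} = (\hat{X} \otimes_\ZZ \Lambda_T)|_F$, yielding the required representative. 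The minimality of $F \in \Upsilon_z^{min}$ ensures we land in the correct piece (via \eqref{eq description of pi^-1}), and by $W$-conjugacy any other choice of $F \in \Upsilon_z^{min}$ produces an isomorphic description.

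Finally, I identify the stabilizer. Suppose $\omega \in W$ sends $(t_1, [z]_{\Gamma_F})$ to $(t_2, [z]_{\Gamma_F})$. The condition on the second coordinate requires $\omega \cdot \aA_F^\CC = \aA_F^\CC$, i.e.\ $\omega \in Z_W(F)$, together with $\omega \cdot z \in \Gamma_F \cdot z$. By Remark \ref{rm Gamma_B are the automorphisms of cC_B given by T}, $\Gamma_F$ is realized by conjugation by elements of the torus $T$, which act trivially on $\Lambda_T$ and hence on $(\hat{X} \otimes_\ZZ \Lambda_T)|_F$; consequently, modulo this trivial action, the effective group identifying orbits of points $(t, [z]_{\Gamma_F})$ is precisely
\[
Z_W(F, z) \;:=\; \{\, \omega \in Z_W(F) \,:\, \omega \cdot z \in \Gamma_F \cdot z \,\}.
\]
Therefore the fibre becomes $(\hat{X} \otimes_\ZZ \Lambda_T)|_F / Z_W(F, z)$, as asserted.

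The main obstacle I foresee is the surjectivity argument in the second paragraph, where one must carefully track how the combined $\Gamma$- and $W$-actions on $\aA^\CC$ are distributed across the disparate pieces $\aA_B^\CC/\Gamma_B$: a single $(\Gamma \rtimes Z_W(D))$-orbit of $z$ can be represented in many pieces simultaneously, and one needs the $W$-action to be transitive on these representations in a way compatible with sending $B$ to $F$ while respecting the $\Gamma_F$-ambiguity; checking this cleanly will rely on Lemma \ref{lm W_B cong Gamma_B rtimes Z_W B} and Lemma \ref{lm Cayley transform give all Cartan subalgebras}.
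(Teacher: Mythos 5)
Your proposal is correct and follows essentially the same route as the paper's proof: both descend the commutative diagram \eqref{eq diagram Hitchin fibration} through the isomorphism of Theorem \ref{tm Mm cong olXi over W}, use \eqref{eq description of pi^-1} together with the $W$-conjugacy of the pieces indexed by $\Upsilon_z^{min}$ to reduce to a single component $(\hat{X} \otimes_\ZZ \Lambda_{T})|_{F}$, and then quotient by the stabilizer of the $z$-data. The one point of divergence is the stabilizer itself: you take $\{\omega \in Z_W(F) : \omega \cdot z \in \Gamma_F \cdot z\}$, while the paper reads $Z_W(F,z)$ as $Z_W(F) \cap Z_W(z)$; your version is the more careful bookkeeping of the $\Gamma_F$-ambiguity already quotiented out in $\Xi_G$ (the two coincide exactly when no $\omega \in Z_W(F)$ moves $z$ within its $\Gamma_F$-orbit without fixing it), so this is a refinement rather than a gap.
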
 

\begin{proof}
If we take any other minimal admissible root system $F' \in \Upsilon_z^{min}$, we observe that $|F| = |F'|$ so, by Lemma \ref{lm Cayley transform give all Cartan subalgebras}, $F'$ and $F$ are conjugate by the action of some element of $W$.

By \eqref{eq description of pi^-1}, since all the $F_{i}$ are related by the action of $W$, one has that the image under $p_{G}$ of $\pi_{G}^{-1}(z)$ is the quotient of one of the components, $(\hat{X} \otimes_\ZZ \Lambda_{T})|_{F}$, by the group that centralizes $z$ and preserves the component, which is $Z_W(z) \cap Z_W(F) = Z_{W}(F,z)$.
\end{proof}

\begin{corollary} 
The Hitchin fibre in $\Mm^{\red}_X(G)$ over $z \in \aA_0^\CC \subset \aA^\CC$ is
\[
b_{G}^{-1}(\beta_G(z)) \cong \quotient{(\hat{X} \otimes_\ZZ \Lambda_{T})}{Z_{W}(z)}.
\]
\end{corollary}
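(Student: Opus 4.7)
The plan is to apply Lemma~\ref{lm form of the Hitchin fibre} with the distinguished choice $F = \{0\} \in \Upsilon$, and verify that this is indeed the minimal element of $\Upsilon_z$ when $z \in \aA_0^\CC$.

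First I would identify $\Upsilon_z^{min}$ explicitly in this case. Recall from~\eqref{eq definition of aA_B} that for every admissible root system $B \in \Upsilon$,
\[
\aA_B^\CC = \aA_0^\CC \oplus \bigoplus_{\beta \in B} \CC(x_\beta + \ol{x}_\beta),
\]
so $\aA_0^\CC \subset \aA_B^\CC$ for all $B \in \Upsilon$. Consequently, if $z \in \aA_0^\CC$, then $z \in \aA_B^\CC$ for every $B$, so $\Upsilon_z = \Upsilon$. Since $\{0\} \in \Upsilon$ (by convention) and $\{0\}$ is contained in every other admissible root system as a subset, $\{0\}$ is the unique minimal element; that is, $\Upsilon_z^{min} = \{\{0\}\}$.

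Next I would match the two factors appearing in Lemma~\ref{lm form of the Hitchin fibre}. By the definition~\eqref{eq definition of X otimes Lambda restricted to B} together with the remark that $(\hat{X}\otimes_\ZZ \Lambda_T)|_{\{0\}} = \hat{X}\otimes_\ZZ \Lambda_T$, the parameter space is the entire abelian variety $\hat{X}\otimes_\ZZ \Lambda_T$. For the group, the condition ``$\omega \cdot \beta \in \{0\}$ for all $\beta \in \{0\}$'' in~\eqref{eq definition of Z_W B} is vacuous, so $Z_W(\{0\}) = W$; hence
\[
Z_W(\{0\}, z) = Z_W(\{0\}) \cap Z_W(z) = Z_W(z).
\]

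Plugging $F = \{0\}$ into the conclusion of Lemma~\ref{lm form of the Hitchin fibre} then yields
\[
b_G^{-1}(\beta_G(z)) \cong \quotient{(\hat{X}\otimes_\ZZ \Lambda_T)|_{\{0\}}}{Z_W(\{0\},z)} = \quotient{(\hat{X}\otimes_\ZZ \Lambda_T)}{Z_W(z),}
\]
which is exactly the claim. The only subtlety worth double-checking is that $\{0\}$ really belongs to $\Upsilon$ and that the formula $(\hat{X}\otimes_\ZZ \Lambda_T)|_{\{0\}} = \hat{X}\otimes_\ZZ \Lambda_T$ is the correct interpretation of an empty intersection in~\eqref{eq definition of X otimes Lambda restricted to B}; both are noted in the text immediately after those definitions, so the argument is essentially an unpacking of Lemma~\ref{lm form of the Hitchin fibre} under the specialization $z\in \aA_0^\CC$.
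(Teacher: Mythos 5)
Your argument is correct and is precisely the specialization the paper intends: the corollary is stated without proof as an immediate consequence of Lemma \ref{lm form of the Hitchin fibre}, obtained by observing that for $z \in \aA_0^\CC$ one has $\Upsilon_z = \Upsilon$ with unique minimal element $F = \{0\}$, so that $(\hat{X}\otimes_\ZZ \Lambda_T)|_{\{0\}} = \hat{X}\otimes_\ZZ \Lambda_T$ and $Z_W(\{0\},z) = Z_W(z)$. Your careful verification of these two identifications is exactly the unpacking the authors leave implicit.
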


We can see that the set of $z \in \aA^\CC$ such that $z \notin \aA_B^\CC$ for any other $B \in \Upsilon$ such that $B \neq D$, is a dense open subset of $\aA^\CC$. If $z$ lies in this subset, we say that it is a {\it generic} element of $\aA^\CC$. Note that we have $\Upsilon_z = \{ D \}$ when $z$ is generic.

\begin{corollary}
\label{co generic Hitchin fibre}
The Hitchin fibre in $\Mm^{\red}_X(G)$ over a generic element $z \in \aA^\CC$ is
\[
b_{G}^{-1}(\beta_G(z)) \cong \quotient{(\hat{X} \otimes_\ZZ \Lambda_{T})|_D}{Z_{W}(D,z)}.
\]
\end{corollary}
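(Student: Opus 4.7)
The plan is to derive this corollary as a direct specialization of Lemma \ref{lm form of the Hitchin fibre}. The only real content is to verify that for a generic element $z \in \aA^\CC$, the set $\Upsilon_z^{\min}$ consists of a single admissible root system, namely the maximal one $D$ fixed at the beginning of the section.

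First I would unpack the genericity hypothesis. By the discussion immediately preceding the corollary, $z$ is generic precisely when $z \in \aA_D^\CC$ but $z \notin \aA_B^\CC$ for any $B \in \Upsilon$ with $B \neq D$. Since $D$ is a maximal admissible root system, we have $\aA_D^\CC = \aA^\CC$, and thus every $z \in \aA^\CC$ automatically lies in $\aA_D^\CC$, so $D \in \Upsilon_z$. The genericity condition then forces $\Upsilon_z = \{D\}$, and in particular $\Upsilon_z^{\min} = \{D\}$.

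Next I would apply Lemma \ref{lm form of the Hitchin fibre} with the minimal admissible root system $F := D$. The lemma gives
\[
b_G^{-1}(\beta_G(z)) \cong \quotient{(\hat{X} \otimes_\ZZ \Lambda_T)|_D}{Z_W(D,z)},
\]
which is exactly the claimed description.

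I do not anticipate any genuine obstacle here; the work has already been done in Lemma \ref{lm form of the Hitchin fibre} and in the identification \eqref{eq description of pi^-1} of $\pi_G^{-1}(z)$ in terms of $\Upsilon_z^{\min}$. The only thing worth double-checking is the equality $\aA_D^\CC = \aA^\CC$, which follows from the choice of $D$ as a maximal admissible root system and the construction \eqref{eq definition of aA_B}: maximality of $D$ corresponds to $\cC_D$ being a maximally non-compact $\theta$-stable Cartan subalgebra, whose non-compact part $\aA_D$ is the maximal abelian subspace $\aA$ of $\mM$ used to define $B_G$.
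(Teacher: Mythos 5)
Your proposal is correct and follows exactly the route the paper intends: the corollary is stated immediately after the definition of genericity (which, as the paper notes, gives $\Upsilon_z=\{D\}$ and hence $\Upsilon_z^{min}=\{D\}$) precisely so that it drops out of Lemma \ref{lm form of the Hitchin fibre} with $F=D$. Your additional check that $\aA_D^\CC=\aA^\CC$, so that $D\in\Upsilon_z$ automatically, is the right point to verify and matches the paper's setup of $D$ as a maximal admissible root system with $\cC_0=\tT\oplus\aA$ maximally non-compact.
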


\begin{remark}
The group $\SU^*(4)$ has $\Sp(4)$ as maximal compact subgroup and a unique (up to conjugation) Cartan subalgebra $\cC = \tT \oplus \aA$. By this uniqueness of the Cartan subalgebra, we know that $\Upsilon = \{ 0 \}$. If $\delta_i$ for $i = 1, \dots, 4$ are the elements with $1$ in the $i$-th position of the diagonal and $0$ elsewhere, we have that
\[
\tT = \RR \cdot i (\delta_2 - \delta_3) \oplus \RR \cdot \frac{i}{2}(\delta_1 - \delta_2 + \delta_3 - \delta_4).
\]
and
\[
\aA = \RR \cdot \frac{1}{2}(\delta_1 - \delta_2 - \delta_3 + \delta_4). 
\]
Note that any non-zero element of $\aA$ is a generic element. The kernel of $\exp_T$ is the lattice $\Lambda_T = \ZZ \cdot  i(\delta_2 - \delta_3) \oplus \ZZ \cdot i(\delta_1 - \delta_4)$.

The Weyl group $W= W(\Sp(4,\CC), \tT^\CC)$ is generated by the reflections of the roots $\delta^*_2 - \delta^*_3$, $\frac{1}{2}(\delta^*_1 - \delta^*_2 + \delta^*_3 - \delta^*_4)$ and $\delta^*_1 - \delta^*_4$. One can check that the centralizer of a generic element $z \neq 0$ of $\aA$ is $Z_W(z) = \langle \sigma_{14}, \sigma_{23} \rangle$ where $\sigma_{ij}$ is the permutation that sends $\delta_i$ to $\delta_j$ and leaves the rest unchanged. Applying Corollary \ref{co generic Hitchin fibre}, we have that the Hitchin fibre over $z$ is
\begin{align*}
b_{G}^{-1}(\beta_G(z)) \cong & \quotient{\hat{X} \otimes_\ZZ \Lambda_{T}}{Z_{W}(z)}
\\
= & \quotient{\hat{X} \otimes_\ZZ  \left( i(\delta_2 - \delta_3) \oplus \ZZ \cdot i(\delta_1 - \delta_4) \right)}{ \langle \sigma_{14}, \sigma_{23} \rangle}
\\
\cong & \left( \quotient{\hat{X} \otimes_\ZZ  \ZZ i(\delta_2 - \delta_3)}{\langle \sigma_{23} \rangle} \right) \times \left( \quotient{\hat{X} \otimes_\ZZ  \ZZ i(\delta_1 - \delta_4)}{\langle \sigma_{14} \rangle}\right)
\\
\cong & \left( \quotient{\hat{X} \otimes_\ZZ  \ZZ}{\pm} \right) \times \left( \quotient{\hat{X} \otimes_\ZZ \ZZ}{\pm}\right)
\\
\cong & \left( \quotient{X}{\pm} \right) \times \left( \quotient{X}{\pm} \right)
\\
\cong & \, \PP^1 \times \PP^1.
\end{align*}
\nr{It is remarkable that the generic fibre of the Hitchin fibration for $\SU^*(4)$ is $\PP^1 \times \PP^1$, which is not an abelian variety.}
\end{remark}

\section{The Hitchin equation and flat connections}
\label{sc hitchin equation}

Let $G$ be a connected real form of a complex semisimple Lie group $G^\CC$. Let $(E,\Phi)$ be a $G$-Higgs bundle and let $h$ be a metric on $E$, i.e. a $C^\infty$ reduction of $E$ to the maximal compact subgroup $H \subset H^\CC$ giving the $H$-bundle $E_h$. Hitchin introduced in \cite{hitchin-selfuality_equations} the so called {\it Hitchin equation} for a metric on a $G$-Higgs bundle.

Recall the Cartan involution $\theta$ and let $\theta_h : E_h(\gG^\CC) \to E_h(\gG^\CC)$ be the involution induced fibrewise by the Cartan involution. Let $\dolbeault_E$ denote the Dolbeault operator of $E$ and denote by $A_h$ the {\it Chern connection}, which is the unique $H$-connection on $E_h$ compatible with $\dolbeault_E$. We denote by $F_h$ the curvature of $A_h$. Take also $\dif x \in \Omega^{1,0}(X,\Oo_X)$ and $\dif \ol{x} \in \Omega^{0,1}(X,\Oo_X)$. In the case of an elliptic curve, the Hitchin equation reads
\begin{equation}
\label{eq Hitchin equation}
F_h + [\Phi \dif x, \theta_h (\Phi) \dif \ol{x}] = 0.
\end{equation}

We have seen in \cite[Proposition 5.1]{franco&garcia-prada&newstead_2} that the Hitchin equation splits in the case of elliptic curve and a complex reductive Lie group $G = H^\CC$. This result generalizes to semisimple real forms.

\begin{proposition} \label{pr splitting of the hitchin equation}
Let $G$ be a connected real form of the complex semisimple Lie group $G^\CC$ and let $H \subset G$ be a maximal compact subgroup. Fix a $G$-Higgs bundle $(E,\Phi)$. Suppose that either $H$ is semisimple or $E$ has trivial characteristic class.
Then $(E,\Phi)$ is polystable if and only if there exists a metric $h$ on
$E$ that satisfies 
\[
F_h =  0 \qquad \nr{and} \qquad [\Phi \dif x, \theta_h(\Phi) \dif
\ol{x}] = 0.
\]
\end{proposition}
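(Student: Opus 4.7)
The plan is to exploit the explicit description of polystable $G$-Higgs bundles over the elliptic curve $X$ obtained in Corollary \ref{co complete description of polystable G-Higgs bundles}: every topologically trivial polystable $(E,\Phi)$ is isomorphic to $(E_\rho, z \otimes \s)$, with $\rho : \pi_1(X) \to H$ a topologically trivial unitary representation and $z$ lying in a maximal abelian subalgebra $\aA^\CC_\rho \subset \zZ_{\mM^\CC}(\rho)$. A standard fact about the symmetric pair $(\zZ_\hH(\rho), \zZ_\mM(\rho))$ lets me choose $\aA^\CC_\rho$ to be the complexification of a real maximal abelian subspace $\aA_\rho \subset \zZ_\mM(\rho)$.

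For the forward direction, I equip $E_\rho$ with the metric $h$ induced by the reduction to $H$ furnished by $\rho$. The Chern connection $A_h$ then coincides with the flat unitary connection of holonomy $\rho$, so $F_h=0$ automatically. Writing $z = z_1 + i z_2$ with $z_j \in \aA_\rho$, the antilinear involution $\theta_h$ on $E_\rho(\mM^\CC)$ (Cartan involution combined with the real structure defined by $h$) sends $z_1 + i z_2 \mapsto -z_1 + i z_2$, so $\theta_h(z) \in \aA^\CC_\rho$ as well, and hence $[z,\theta_h(z)] = 0$ by abelianness of $\aA^\CC_\rho$. Both equations thus hold separately.

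Conversely, assume $F_h = 0$ and $[\Phi \dif x, \theta_h(\Phi) \dif \ol{x}] = 0$. Flatness of $A_h$ together with the Narasimhan--Seshadri--Ramanathan theorem produces a holonomy representation $\rho : \pi_1(X) \to H$ with $(E,h) \cong E_\rho$; here triviality of the characteristic class (or semisimplicity of $H$) is precisely what ensures the flat $H^\CC$-connection can be gauged into a $H$-connection. By Remark \ref{rm set of all possible Higgs fields}, $\Phi$ corresponds to $z \otimes \s$ for some $z \in \zZ_{\mM^\CC}(\rho)$. Decomposing $z = z_1 + i z_2$ with $z_j \in \zZ_\mM(\rho)$, a direct computation yields $[z, \theta_h(z)] = 2 i [z_1, z_2]$, so the second equation forces $[z_1, z_2] = 0$. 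Hence $z_1$ and $z_2$ sit in a common real maximal abelian subalgebra $\aA_\rho \subset \zZ_\mM(\rho)$, and $z \in \aA^\CC_\rho$, which is maximal abelian in $\zZ_{\mM^\CC}(\rho)$. Polystability of $(E,\Phi)$ now follows from Lemma \ref{lm complete description of polystable}.

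The main delicate point is the fibrewise identification of the antilinear involution $\theta_h$ acting on $E_\rho(\mM^\CC)$ with the operation $z_1 + i z_2 \mapsto -z_1 + i z_2$ defined using the real structure provided by the unitary holonomy $\rho$; once this is in hand, the real/imaginary decomposition reduces the second Hitchin equation to the single commutation relation $[z_1, z_2] = 0$, and everything else is driven by Narasimhan--Seshadri--Ramanathan together with the structural results of Section \ref{sc stability}.
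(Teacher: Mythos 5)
Your proof is correct and follows essentially the same route as the paper's: both directions rest on the Narasimhan--Seshadri--Ramanathan theorem together with the explicit description of (poly)stable $G$-Higgs bundles as $(E_\rho, z\otimes\s)$ from Section \ref{sc stability}, reducing the second equation to the commutation of $z$ with its image under the fibrewise involution inside a maximal abelian subalgebra of $\zZ_{\mM^\CC}(\rho)$, and then invoking Lemma \ref{lm complete description of polystable}. Your real/imaginary decomposition $z=z_1+iz_2$ and the computation $[z,\theta_h(z)]=2i[z_1,z_2]$ just make explicit what the paper phrases as choosing $\aA_\rho^\CC$ inside a $\theta$-stable Cartan subalgebra containing $z$ and $\theta(z)$.
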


\begin{proof}
By Proposition \ref{pr E Phi polystable implies E polystable}, if the $G$-Higgs bundle $(E,\Phi)$ is polystable, then $E$ is polystable and by the
Narasimhan--Seshadri--Ramanathan Theorem there exists a metric for which $F_h = 0$. 

By Corollary \ref{co description of polystable G-Higgs bundles}, $(E,\Phi)$ is
isomorphic to $(E_\rho,z \otimes \s)$ where $z \in \aA_\rho^\CC$ maximal abelian subalgebra of $\zZ_{\mM^\CC}(\rho)$. With no loss of generality, we can take $\aA_\rho^\CC$ to be contained in a $\theta$-stable Cartan subalgebra. Then, we have $[z,\theta(z)] = 0$ and
\begin{equation}  \label{eq vanishing of the Higgs field part}
[\Phi \dif x, \theta_h(\Phi) \dif \ol{x}] \cong [z,\theta(z)] \otimes \s \otimes (\dif x
\wedge \dif \ol{x}) = 0.
\end{equation}

Conversely, it follows from the fact that $F_h = 0$ defines a representation $\rho : \pi_1(X) \to H$ and then $(E, \Phi) \cong (E_\rho, z \otimes \s)$ where $z \in \zZ_{\mM^\CC}(\rho)$ by Corollary \ref{co description of semistable G-Higgs bundles}. Take $\aA^\CC_\rho$ to be a maximal abelian subalgebra of $\zZ_{\mM^\CC}(\rho)$ containing $z$ and $\theta(z)$. Finally, by Lemma \ref{lm complete description of polystable}, $(E_\rho, z \otimes \s)$ is polystable. 
\end{proof}

\begin{remark}
\label{rm Hitchin-Simpson easier}
Note that Proposition \ref{pr splitting of the hitchin equation} states the {\it Hitchin--Kobayashi correspondence}. In particular,  $\Mm_X(G)$  is homeomorphic to the moduli space $\Cc_X(G)$ of $G$-bundles with flat $G^\CC$-connections. Note also, that in order to prove the hard implication (``polystable implies existence of solutions of the Hitchin equation'') in the elliptic case, we make use only of the Narasimhan--Seshadri--Ramanathan Theorem and Proposition \ref{pr E Phi polystable implies E polystable}.
\end{remark}

\begin{proposition} \label{pr Toledo = 0}
Let $G$ be semisimple with maximal compact subgroup $H \subset G$ and let $(E, \Phi)$ be a semistable $G$-Higgs bundle. If $H$ has non-finite center, there are no solutions of the Hitchin equation \eqref{eq Hitchin equation} for $E$ with non-trivial characteristic class $d$ in $\pi_1(H^\CC)$. 
\end{proposition}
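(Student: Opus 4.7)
The plan is to pair the Hitchin equation with a non-trivial character of $H^\CC$ (guaranteed by the non-finite center of $H$), extract the characteristic class via Chern--Weil, and then use a Milnor--Wood-type inequality arising from semistability to force it to vanish on the elliptic curve.

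Since $\zZ(\hH) \neq 0$, there exists a character $\chi: H^\CC \to \CC^*$ with $d\chi \neq 0$ on the center. Applying $d\chi$ to \eqref{eq Hitchin equation} and integrating over $X$, Chern--Weil gives
\[
2\pi i \deg(\chi_* E) \;=\; \int_X d\chi(F_h) \;=\; -\int_X d\chi\bigl([\Phi\,\dif x,\,\theta_h(\Phi)\,\dif \ol{x}]\bigr).
\]
Letting $s_\chi \in i\hH$ be the Killing-dual of $d\chi$, one decomposes $\mM^\CC = \bigoplus_\mu V_\mu$ into $\ad(s_\chi)$-eigenspaces and correspondingly $\Phi = \sum_\mu \Phi_\mu$. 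Using the $\gG^\CC$-invariance of the Killing form (so that $\langle s_\chi,[\Phi,\theta_h\Phi]\rangle = \langle[s_\chi,\Phi],\theta_h\Phi\rangle$), the orthogonality $\langle V_\mu,V_\nu\rangle = 0$ for $\mu + \nu \neq 0$, and the positive-definiteness of the form $B_h(\cdot,\cdot) := -\langle\cdot,\theta_h(\cdot)\rangle$ on $\mM^\CC$, the right-hand side simplifies to
\[
\deg(\chi_* E) \;=\; \kappa\,\sum_\mu \mu\,\|\Phi_\mu\|_{L^2,h}^{2}
\]
for some positive real normalization constant $\kappa$.

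The semistability of $(E,\Phi)$ together with the existence of a Hitchin solution forces $(E,\Phi)$ to be polystable, by a Hitchin--Kobayashi correspondence extending Proposition~\ref{pr splitting of the hitchin equation} to the case of non-trivial characteristic class. One then considers reductions of $E$ to the parabolic subgroups $P^{\pm} \subset H^\CC$ determined by $\pm s_\chi$, chosen so that $\Phi$ lies in the corresponding $\mM^{-}_{\sigma,\pm\chi}$-piece. Polystability yields the two opposing semistability inequalities $\pm \deg(\chi_* E) \le C(g-1)$; since $X$ has genus $g = 1$, the bound collapses to $\deg(\chi_* E) = 0$. Varying $\chi$ over all characters of $H^\CC$ annihilates the free part of $d$ in $\pi_1(H^\CC)$. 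Any residual torsion contribution vanishes because a polystable $G$-Higgs bundle on $X$ corresponds (via non-abelian Hodge theory) to a representation $\pi_1(X) = \ZZ\oplus\ZZ \to G^\CC$ with abelian image, which lies in a torus of $H^\CC$ and therefore lifts to the universal cover, annihilating the topological obstruction.

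The main obstacle will be to make the Milnor--Wood-type inequality precise in this generality: for each character $\chi$ one must exhibit a parabolic reduction of $E$ whose semistability inequality produces the correct opposite bound, and carefully track how $\Phi$ decomposes with respect to the weight decomposition of $\mM^\CC$ under $\ad(s_\chi)$.
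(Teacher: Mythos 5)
Your Chern--Weil computation in the first two steps is sound, but the argument breaks down exactly at the point you flag as ``the main obstacle'', and that obstacle is not a technicality: it is the whole content of the step. Since $\chi$ is a character of $H^\CC$, its Killing dual $s_\chi$ is central in $\hH^\CC$, so the ``parabolic'' $P_{\pm s_\chi}$ is all of $H^\CC$ and the reduction $\sigma$ is vacuous. The semistability condition then only says: \emph{if} $\Phi$ happens to lie in $H^0(E(\mM^\CC)^{-}_{\sigma,\chi})$, i.e.\ if all components $\Phi_\mu$ with $\mu>0$ vanish, \emph{then} $\deg_{\sigma,\chi}(E)\geq 0$. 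For a general Higgs field both hypotheses (for $\chi$ and for $-\chi$) fail simultaneously, so semistability gives you neither of the two opposing inequalities $\pm\deg(\chi_*E)\le C(g-1)$. The Milnor--Wood inequality for arbitrary Hermitian $G$ is a genuine theorem whose proof requires combining the Hitchin--Kobayashi correspondence with a pointwise estimate relating $\sum_\mu \mu\|\Phi_\mu\|^2$ to the curvature; it is not a formal consequence of the definition of (semi/poly)stability, and you have not supplied it. Your final step is also problematic: the claim that a flat bundle with abelian holonomy ``lies in a torus and therefore lifts'' is Borel's theorem, which applies precisely to the topologically trivial case; invoking it to kill a residual torsion class is circular.

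The paper avoids all of this by exploiting the elliptic curve from the start. By Corollary \ref{co description of semistable G-Higgs bundles} every semistable $G$-Higgs bundle over $X$ has constant Higgs field $z\otimes\s$, and the commutator term of \eqref{eq Hitchin equation} vanishes \emph{pointwise} as in \eqref{eq vanishing of the Higgs field part}; hence any solution of the Hitchin equation satisfies $F_h=0$ on the nose. A flat $H$-bundle has $\deg(\chi_*E)=0$ for every character $\chi$ of $H^\CC$, and since $Z_H(H)$ is positive-dimensional such characters detect the (free part of the) class $d$, giving the contradiction. In other words, over an elliptic curve the global integral identity you set up in steps 1--2 is subsumed by a pointwise identity, and the Milnor--Wood machinery is never needed. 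If you want to salvage your approach, you must either prove the general Milnor--Wood inequality (which is a substantially harder route) or replace step 3 by the observation that $\sum_\mu\mu\|\Phi_\mu\|^2$ vanishes identically because $[\Phi,\theta_h\Phi]=[z,\theta(z)]\otimes\s^2=0$ --- at which point you have reproduced the paper's proof.
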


\begin{proof}
Note that, thanks to Corollary \ref{co description of semistable G-Higgs bundles}, the vanishing in \eqref{eq vanishing of the Higgs field part} still holds. Then, the Hitchin equation for a metric $h$ on $E$ forces
\[
F_h = 0,
\]
which has no solutions in the cases stated in the hypothesis.
\end{proof}

\begin{remark}
Let us take the semisimple real form $G = \SU(p,q)$ whose maximal compact subgroup $H = S(\U(p) \times \U(q))$ has non-finite center. By Proposition \ref{pr Toledo = 0} there are no solutions of the Hitchin equation unless $d \in \pi_1(H^\CC)$ is trivial. In this case $H^\CC = S(\GL(p,\CC) \times \GL(q,\CC))$, and then a $H^\CC$-bundle $E$ can be seen as a direct sum of two vector bundles $V \oplus W$ of rank $p$ and $q$ with $\det(V \oplus W) \cong \Oo_X$ and degrees $a = \deg(V)$ and $b = \deg(W)$ satisfying
\begin{equation} \label{eq a + b}
p a + q b = 0.
\end{equation}
The characteristic class $d \in \pi_1(H^\CC)$ is determined by $a$ and $b$ and Proposition \ref{pr Toledo = 0} implies that they are solutions of \eqref{eq Hitchin equation} only when $a = 0$ and $b = 0$. We can see that this agrees with the Milnor-Wood inequality for the Toledo invariant \cite{bradlow&garcia-prada&gothen}, 
\[
- 2 \min\{p, q\}(g-1) \leq \frac{pb - q a}{p q} \leq 2 \min\{p, q\}(g-1),
\]
which in this case reads
\begin{equation} \label{eq a - b}
pb - q a = 0.
\end{equation}
As we can see, $a = b = 0$ is the only possible solution of \eqref{eq a + b} and \eqref{eq a - b}.
\end{remark}

\end{document}